\numberwithin{equation}{section}
\newtheorem{thmx}{Theorem}
\newtheorem{theorem}{Theorem}[section]
\newtheorem{proposition}[theorem]{Proposition}
\newtheorem{lemma}[theorem]{Lemma}
\newtheorem{corollary}[theorem]{Corollary}
\theoremstyle{definition}
\newtheorem{definition}[theorem]{Definition}
\theoremstyle{remark}
\newtheorem*{remark}{Remark}
\newtheorem{example}[theorem]{Example}
\DeclareMathOperator{\Irr}{Irr}
\newcommand{\B}{\mathcal{B}}
\newcommand{\C}{\mathbb{C}}
\newcommand{\N}{\mathbb{N}}
\newcommand{\QG}{\mathbb{G}}
\newcommand{\QH}{\mathbb{H}}
\newcommand{\QK}{\mathbb{K}}
\newcommand{\Pol}{\mathcal{O}}
\newcommand{\I}{\mathcal{I}}
\newcommand{\R}{\mathbb{R}}
\newcommand{\T}{\mathbb{T}}
\newcommand{\Z}{\mathbb{Z}}
\DeclareMathOperator{\Gauss}{Gauss}
\DeclareMathOperator{\id}{id}
\begin{document}

\title{Connectedness and Gaussian Parts for Compact Quantum Groups}

\author{Uwe Franz}
\address{U.F., D\'epartement de math\'ematiques de Besan\c{c}on,
Universit\'e de Bourgogne Franche-Comt\'e, 16, route de Gray, 25 030
Besan\c{c}on cedex, France}
\email{uwe.franz@univ-fcomte.fr}
\urladdr{http://lmb.univ-fcomte.fr/uwe-franz}

\author{Amaury Freslon}
\address{A.F., Universit\'e Paris-Saclay, CNRS UMR 8628, Laboratoire de Mathématique d'Orsay, 91405 Orsay cedex, France}
\email{amaury.freslon@universite-paris-saclay.fr}
\urladdr{https://www.imo.universite-paris-saclay.fr/~freslon}

\author{Adam Skalski}
\address{A.S. Institute of Mathematics of the Polish Academy of Sciences, ul.\ \'Sniadeckich 8, 00-656 Warszawa, Poland}
\email{a.skalski@impan.pl}

\thanks{U.F.\ and A.F.\ were partially supported  by the ANR grant ``Noncommutative analysis on groups and quantum groups'' (ANR-19-CE40-0002).
A.F.\  was also partially supported by the ANR grant ``Operator algebras and dynamics on groups'' (ANR-19-CE40-0008).
A.S.\ was partially supported by the National Science Center (NCN) grant no.~2020/39/I/ST1/01566. 
\\ \hspace*{10pt} We thank Jacek Krajczok for useful comments.
}

\keywords{Compact quantum groups; Gaussian generating functionals; Hopf $^*$-algebras}
\subjclass[2010]{16T05; 20G42}
\begin{abstract}
We introduce the Gaussian part of a compact quantum group $\QG$, namely the largest quantum subgroup of $\QG$ supporting all the Gaussian functionals of $\QG$. We prove that the Gaussian part is always contained in the Kac part, and characterise Gaussian parts of classical compact groups, duals of classical discrete groups and $q$-deformations of compact Lie groups. The notion turns out to be related to a new concept of ``strong connectedness'' and we exhibit several examples of both strongly connected and  totally strongly disconnected compact quantum groups.
\end{abstract}
\maketitle

\section{Introduction}
\label{sec:intro}

L\'evy processes, i.e.\ stochastic processes with independent and identically distributed increments, form one of the most studied classes of stochastic processes (\cite{Sato}); and among these the Gaussian processes, and in particular the Brownian motion are crucial examples. They were initially studied in the Euclidean space, but it quickly became clear that  the convolution product for probability measures afforded by the (locally compact) group structure gives a very natural framework for generalisations (\cite{Heyer}, \cite{Liao}). In the end of 1980s, motivated by the development of quantum probability theory, the algebraic language was used by Accardi, Sch\"urmann and von Waldenfels to define in \cite{ASW} abstract \emph{quantum L\'evy processes}, understood as certain families of quantum random variables living on a $^*$-bialgebra. Soon after, Sch\"urmann introduced in \cite{sch90} an important subclass of quantum L\'evy processes, namely \emph{quantum Gaussian processes}, see also \cite[Section 5]{schurmann93}. Sch\"urmann's definition is phrased in terms of \emph{Gaussian generating functionals}. These should be viewed as quantum counterparts of `second-order' or `quadratic' generators of convolution semigroups of measures, and naturally lead to the concept of \emph{Gaussian states on (compact) quantum groups}. The idea of Sch\"urmann opened a path to studying quantum versions of L\'evy-Khintchine decompositions (\cite{das+al18} and references therein) or to classification of Gaussian generators on concrete quantum groups (see \cite{SchurmannSkeide}). 

The last cited paper exhibited a curious phenomenon: all Gaussian generating functionals on Woronowicz's $SU_q(2)$ (with $q \in (-1,0)\cup (0,1)$) are supported on the circle $\mathbb{T}$, the largest classical subgroup of $SU_q(2)$. In the same spirit, it is known that the free permutation do not support any non-trivial Gaussian generating functionals whatsoever (\cite{FranzKulaSkalski}). Meanwhile, some examples show that in general Gaussian generating functionals need not be supported on classical subgroups (see \cite{das+al18} or the results in Sections \ref{sec:ExamplesI} and \ref{sec:ExamplesII} below). This motivates the main question studied in this paper, which has natural connections for example with the notion of topological generation for compact quantum groups (see the end of Section \ref{subsec:free}):
\begin{itemize}
\item Given a compact quantum group $\QG$, what is the smallest quantum subgroup of $\QG$ which supports all Gaussian generating functionals (equivalently, all Gaussian states) of $\QG$? 	
\end{itemize}
We will call the quantum subgroup as above (whose existence is easy to deduce) the \emph{Gaussian part} of $\QG$ and denote it $\Gauss(\QG)$, by analogy with the \emph{classical part}  (i.e.\ the largest classical subgroup) of $\QG$ and the \emph{Kac part} (the largest quantum subgroup of Kac type) of $\QG$. The analysis of the classical case shows that there the notion of the Gaussian part coincides with the connected component of the identity. This motivates the introduction of a natural quantum counterpart of classical connectedness. To distinguish this notion from a strictly weaker concept defined in \cite{wang2009simple}, we call it \emph{strong connectedness}; roughly speaking a compact quantum group $\QG$ is strongly connected if the intersection of a certain family of  ideals in $\Pol(\QG)$  trivialises. With this notion at hand -- which appears to be of independent interest -- we can state the first main result of this work.

\begin{thmx}\label{Theorem_A}
For any compact quantum group $\QG$ the Gaussian part of $\QG$ is contained in the strongly connected component of identity of $\QG$, which in turn is contained in the Kac part of $\QG$:
\begin{equation*}
\Gauss(\QG) \subset \QG^{00} \subset \textup{Kac}(\QG).
\end{equation*}
\end{thmx}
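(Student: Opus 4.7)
The plan is to treat the two inclusions separately, working in the contravariant language of Hopf $^*$-ideals: a quantum subgroup $\QH \subset \QG$ corresponds to a Hopf $^*$-ideal $J_\QH \subset \Pol(\QG)$ with $\Pol(\QH) = \Pol(\QG)/J_\QH$, and inclusions of subgroups reverse inclusions of ideals. Thus what must be shown is
\[
J_{\textup{Kac}(\QG)} \subset J_{\QG^{00}} \subset J_{\Gauss(\QG)}.
\]
By its construction $J_{\QG^{00}}$ is the intersection of the distinguished family $\mathcal{F}$ of Hopf $^*$-ideals appearing in the definition of strong connectedness. Guided by the classical analogy (where the connected component of a compact group is the intersection of kernels of continuous morphisms onto finite groups), I expect $\mathcal{F}$ to consist of ideals whose quotient quantum groups are \emph{rigid} in a suitable sense, e.g.\ finite-dimensional.

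For the inclusion $\Gauss(\QG) \subset \QG^{00}$, it suffices to show that every Gaussian generating functional $L$ on $\Pol(\QG)$ vanishes on every $I \in \mathcal{F}$: since $J_{\QG^{00}} = \bigcap_{I \in \mathcal{F}} I$ is then a Hopf $^*$-ideal contained in $\bigcap_L \ker L$, it is contained in the largest such Hopf ideal, which by definition is $J_{\Gauss(\QG)}$. Equivalently, the pushforward of $L$ along the quotient $\Pol(\QG) \to \Pol(\QG)/I$ is itself a Gaussian generating functional on the quotient quantum group, and one must argue that it must be trivial. The defining algebraic features of a Gaussian generator are that it is hermitian, conditionally positive, and vanishes on $(\ker\epsilon)^3$; on a sufficiently rigid quotient (in particular a finite-dimensional Hopf $^*$-algebra, where the cotangent space $\ker\epsilon/(\ker\epsilon)^2$ collapses appropriately), these constraints force the generator to vanish, mirroring the classical fact that a Gaussian convolution semigroup on a finite group is the trivial semigroup $t \mapsto \delta_e$.

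For the inclusion $\QG^{00} \subset \textup{Kac}(\QG)$ I would show that each $I \in \mathcal{F}$ has a Kac-type quotient: this gives $J_{\textup{Kac}(\QG)} \subset I$ by minimality of the Kac ideal, and therefore $J_{\textup{Kac}(\QG)} \subset \bigcap_{I \in \mathcal{F}} I = J_{\QG^{00}}$. Under the anticipated description of $\mathcal{F}$ via finite-codimensional Hopf ideals this step is immediate, since the Haar state on a finite-dimensional compact quantum group is automatically a trace and so every such quotient is of Kac type. The principal obstacle is the triviality claim for pushforwards of Gaussian functionals in the previous paragraph: this will require a Sch\"urmann-style decomposition of $L$ (into quadratic form, drift and coboundary pieces) combined with precise structural information about $\Pol(\QG)/I$ coming from the exact definition of $\mathcal{F}$. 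Once that step is secured, the remaining pieces of the argument amount to formal manipulations of Hopf $^*$-ideals.
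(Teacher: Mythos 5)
There is a genuine gap, and it originates in your guess about what $\QG^{00}$ is. The paper defines strong connectedness via the \emph{powers of the augmentation ideal}: one sets $K_n = (\ker\varepsilon)^n$ and $K_\infty = \bigcap_{n\geqslant 1} K_n$, proves that $K_\infty$ is a single Hopf $^*$-ideal (Proposition \ref{prop:hopfideal}), and defines $\Pol(\QG^{00}) = \Pol(\QG)/K_\infty$. Your family $\mathcal{F}$ of ``rigid'' (finite-codimensional) Hopf $^*$-ideals does not exist in this picture: the individual $K_n$ for $n\geqslant 2$ are not coideals, their quotients are not Hopf algebras (let alone finite-dimensional ones), and the notion you are reconstructing is in fact the strictly weaker connectedness of Wang and Cirio--D'Andrea--Pinzari--Rossi, which the paper explicitly distinguishes from strong connectedness. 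As a result, both halves of your argument are aimed at the wrong target. For the first inclusion, once the correct definition is in place the proof is essentially immediate: a Gaussian generating functional vanishes on $K_3$ by definition, hence on $K_\infty \subset K_3$, and since $K_\infty$ is a Hopf $^*$-ideal it is contained in the largest Hopf $^*$-ideal inside $\bigcap_\phi \ker\phi$, i.e.\ in $\I_{\Gauss(\QG)}$. The difficult ``triviality of pushforwards on rigid quotients'' step you flag as the principal obstacle is not needed and would in fact be false in spirit: $\Pol(\QG)/K_\infty$ is designed precisely to \emph{support} all the Gaussian functionals, not to kill them.

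For the second inclusion your proposed shortcut (finite-dimensional quotients are automatically Kac) is unavailable, and the paper's proof has real content here that your proposal does not replace. One must show $I_{\textup{Kac}} \subset K_\infty$, where $I_{\textup{Kac}}$ is generated by the matrix coefficients $u_{jk}$ of irreducible unitaries with distinct modular eigenvalues $q_j \neq q_k$ (Corollary \ref{cor:generatorsKacpart}). The argument manipulates the unitarity relations together with the relations for $Q\overline{U}Q^{-1}$ to express $\bigl(\tfrac{q_k}{q_j}-1\bigr)u_{jk}$ as a sum of products of centred coefficients, and then bootstraps: if $u_{jk} \in K_n$ for all such pairs, the same identity places it in $K_{n+1}$, whence $u_{jk} \in K_\infty$. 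Nothing in your proposal supplies this computation, so the inclusion $\QG^{00} \subset \textup{Kac}(\QG)$ remains unproved.
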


Apart from the structural result above we compute the Gaussian parts of many compact quantum groups. We summarise the main statements obtained in the following theorem. 

\begin{thmx}\label{Theorem_B}
The following hold:	
\begin{itemize}
\item If $G$ is a classical compact group, then its Gaussian part coincides with its (strongly) connected component	of the identity: $\Gauss(G) = G^0$;
\item If $\Gamma$ is a finitely generated  discrete group, then $\Gauss(\widehat{\Gamma}) = \widehat{\Gamma/\sqrt{\gamma_{3}(\Gamma)}}$;
\item Gaussian parts of (pro-)finite quantum groups and of quantum permutation groups are trivial;
\item If $G$ is a simply connected semisimple compact Lie group and $q\in (0,1)$ then $\Gauss(\QG_q)$ is the maximal torus $\mathbf{T}\subset  {\QG_q}$;
\item $\Gauss(O_{N}^{*})=SO(N)$;
\item $\Gauss(O_{N}^{+})$ (for $N \geq 4$) and  $\Gauss(U_{N}^{+})$ (for $N \geq 2$)  are neither classical nor dual to  discrete groups. 
\end{itemize}
\end{thmx}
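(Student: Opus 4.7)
The plan is to tackle the six assertions separately, since they involve very different classes of quantum groups, while relying on a common framework: writing a generating functional $L$ via its associated Sch\"urmann triple $(\rho, \eta, L)$ with cocycle $\eta \colon \Pol(\QG) \to H_\rho$, a Gaussian is the case where $\rho = \varepsilon \cdot \id_{H_\rho}$, equivalently where $\eta$ is a derivation (vanishing on $(\ker\varepsilon)^2$), equivalently where $L$ vanishes on $(\ker\varepsilon)^3$. The Gaussian part $\Gauss(\QG)$ is then the quantum subgroup corresponding to the Hopf $^*$-ideal cut out by all such $L$, so in each item one either classifies all Gaussians and computes this ideal, or exhibits enough specific Gaussians to witness the target subgroup while separately showing no larger quotient supports them.

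For item (1), a Gaussian on a compact Lie group $G$ is a second-order differential operator at the identity, so it vanishes on any function supported outside $G^0$; conversely a bi-invariant Laplacian on $G^0$ (or, in general, projective limits thereof across Lie quotients) has no Hopf-invariant ideal in its kernel, giving $\Gauss(G) = G^0$. For item (2), I would invoke the standard correspondence between Gaussian generating functionals on $\widehat{\Gamma}$ and pairs $(c, H)$ with $H$ a real Hilbert space and $c \colon \Gamma \to H$ a group \emph{homomorphism}. Such $c$ annihilate $\gamma_3(\Gamma) = [[\Gamma,\Gamma],\Gamma]$, and since $H$ is torsion-free they also annihilate $\sqrt{\gamma_3(\Gamma)}$; conversely, $\Gamma/\sqrt{\gamma_3(\Gamma)}$ is a torsion-free two-step nilpotent group, on which homomorphisms into $\R^n$ separate points. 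Item (3) reduces to the observation that any Gaussian on a finite-dimensional Hopf $^*$-algebra is trivial (the counit ideal is nilpotent, so no non-zero derivation to a trivial representation can exist), with projective limits inheriting triviality; for quantum permutation groups one adapts the argument of \cite{FranzKulaSkalski}.

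For item (4), the strategy is to use the PBW-type presentation of $\Pol(\QG_q)$ arising from $U_q(\mathfrak{g})$ and its $SU_q(2)$-subgroups indexed by simple roots: a Sch\"urmann derivation must be compatible with the $q$-commutation relations, which on each $SU_q(2)$-block forces it to kill the off-diagonal root-vector generators by the computation of \cite{SchurmannSkeide}; glueing across simple roots leaves only cocycles supported on the maximal torus $\mathbf{T}$, and classical Brownian motion on $\mathbf{T}$ provides the reverse inclusion. For item (5), one combines Theorem A (which confines the Gaussian part to the strongly connected component, hence to the Kac part) with a direct analysis tailored to $\Pol(O_N^*)$: the half-commutation relations together with the defining cocycle equations should force Sch\"urmann cocycles to descend to the commutative quotient $\Pol(O(N))$, reducing to item (1) to identify the connected component as $SO(N)$.

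For item (6), the aim is only the qualitative statement that $\Gauss(O_N^+)$ and $\Gauss(U_N^+)$ are neither classical nor duals of discrete groups. I would exhibit an explicit Gaussian (analogous to a free Brownian motion on $O_N^+$) whose Sch\"urmann cocycle lands in a non-abelian unitary representation, ruling out classicality, and another whose induced Hopf $^*$-subalgebra of $\Pol(\QG)$ is not cocommutative, ruling out a discrete dual. The main obstacles, I expect, are items (4) and (5): proving that \emph{all} Gaussians factor through the expected subgroup (as opposed to merely exhibiting enough Gaussians to fill it out) requires delicate control of the cocycle equations modulo the relations defining $\Pol(\QG_q)$ and $\Pol(O_N^*)$, and is the technically demanding half in each case.
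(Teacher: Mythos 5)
There are genuine gaps, the most serious being in your treatment of item (2). You propose to identify Gaussian generating functionals on $\widehat{\Gamma}$ with group homomorphisms $c:\Gamma\to H$ into a \emph{real} Hilbert space and then argue that such homomorphisms separate the points of $\Gamma/\sqrt{\gamma_{3}(\Gamma)}$. Both halves fail. A homomorphism into an abelian group kills all of $\gamma_{2}(\Gamma)$, not just $\gamma_{3}(\Gamma)$, so homomorphisms into $\R^{n}$ cannot separate a nontrivial central commutator from the identity (already for the discrete Heisenberg group); your argument would therefore yield $\widehat{\Gamma/\sqrt{\gamma_{2}(\Gamma)}}$, which is strictly too small. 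The point you are missing is that the generating functional $\phi$ is not determined by the cocycle: the cocycle takes values in a \emph{complex} Hilbert space, and the imaginary (symplectic) part of $\langle\eta(g),\eta(h)\rangle$ allows $\phi$ to be nonzero on $\gamma_{2}(\Gamma)$ while still vanishing on $K_{3}$. The paper handles the converse inclusion by an explicit construction: for $\Lambda=\Gamma/\sqrt{\gamma_{3}(\Gamma)}$ it first observes (via Mal'cev) that $\widehat{\Lambda/Z(\Lambda)}$ is a torus, hence Gaussian, and then extends a quadratic conditionally negative function on the center $Z(\Lambda)\cong\Z^{m}$ by zero off the center to detect every nontrivial central element. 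Something of this kind is indispensable; it cannot come from $\R$-valued homomorphisms. Relatedly, your justification for item (3) is backwards: the counit ideal of a finite-dimensional CQG-algebra is \emph{idempotent} (it is generated by projections $p=p^{3}$), not nilpotent — nilpotency of $K_{1}$ would give $K_{\infty}=0$, i.e.\ strong connectedness, which is the opposite of what you need. The paper's uniform argument is that an algebra generated by projections has $K_{1}=K_{\infty}$ (each $p-\varepsilon(p)$ lies in every $K_{n}$), so the quantum group is totally strongly disconnected and its Gaussian part is trivial; note also that killing the cocycle alone does not kill $\phi$ (that only makes $\phi$ a drift), so you must argue on $\phi$ directly.

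For item (4) you propose a PBW-type analysis of $\Pol(\QG_{q})$ with an $SU_{q}(2)$-block-by-block reduction and a gluing step; this is speculative and, as you note, would be the hard part — but it is unnecessary. Theorem A already confines $\Gauss(\QG_{q})$ to the Kac part, and by Tomatsu's result the Kac part of $\QG_{q}$ \emph{is} the maximal torus $\mathbf{T}$; since $\mathbf{T}$ is connected it is Gaussian by item (1), and the proof is two lines. Your outline for item (5) matches the paper's (force the cocycle inner products to be real using the half-commutation relations and the antisymmetry of the cocycle matrix from the known classification on $O_{N}^{+}$, then invoke the trace/commutator-ideal equivalence to descend to $O(N)$), but the decisive computation is not supplied. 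For item (6), rather than constructing explicit non-tracial Gaussians on $O_{N}^{+}$ itself, the paper simply exhibits Gaussian quantum \emph{subgroups} — the dual of the discrete Heisenberg group (inside $O_{2}^{+}\ast O_{2}^{+}$, via $\Gauss(\widehat{\F_{2}})$) and the non-abelian connected group $SO_{4}$ — and uses monotonicity of the Gaussian part under passage to subgroups; also note that for a Gaussian triple the representation $\rho$ is a multiple of the counit, so ``a cocycle landing in a non-abelian representation'' is not a meaningful criterion as stated.
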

It is worth noting that we do not know whether the Gaussian part of the free orthogonal group  $O_N^+$ is $O_N^+$ itself (similarly for the free unitary group). The problem seems to have subtle connections with the questions regarding topological generation inside free quantum groups.

The detailed plan of the paper is as follows: in Section \ref{sec:Gaussian} we recall the basic facts concerning generating functionals on $^*$-bialgebras, including the notion of Gaussian generating functionals and the subclass of drifts, together with several related characterizations, and establish a Wick-type formula which is useful for computations.
Section \ref{sec:Gaussianpart} recalls basic facts of the theory of compact quantum groups, introduces the Gaussian part and establishes its basic properties. We also prove a that point that Gaussian parts satisfy the Kac property, and the proof relies on a new characterization of the maximal Kac quotient which we believe to be of independent interest. Section \ref{sec:stronglyconnected} is devoted to the concept of strong connectedness; there we also complete the proof of Theorem A. Eventually, the last two sections contain computations of the Gaussian part in several examples. In Section \ref{sec:ExamplesI} we determine Gaussian (and drift) parts for classical compact groups and duals of discrete groups, and in Section \ref{sec:ExamplesII} we do the same for $q$-deformations and half-liberated orthogonal groups and discuss partial results we obtained for free quantum groups. This completes the proof of Theorem B, which is a combination of Corollary \ref{cor:GaussClassical}, Theorem \ref{dual:Gaussian}, Proposition \ref{prop:projections}, Proposition \ref{prop:q-deform}, Proposition \ref{prop:halfliberated} and Proposition \ref{prop:freeqg}.

%\subsection*{Acknowledgments}

\section{Gaussian functionals}\label{sec:Gaussian}

In this section we recall the notion of generating functionals on $^*$-bialgebras, focusing on the class of Gaussian functionals and drifts for which we provide several characterizations. We also show that Gaussian generating functionals satisfy a version of the Wick property.

\subsection{Definition and properties}

Let $\B$ be an involutive bialgebra with unit $\mathbf{1}$ and counit $\varepsilon$. In all cases of interest afterwards, $\B$ will be the Hopf *-algebra $\Pol(\QG)$ of a compact quantum group $\QG$, but we stay at a general level for the moment. A \emph{generating functional} on $\B$ is a linear functional $\phi : \B\to \C$ with the following three properties:
\begin{enumerate}
\item 
$\phi(\mathbf{1}) = 0$ (normalization);
\item 
$\phi(b^{*}) = \overline{\phi(b)}$ for all $b\in \B$ (hermitianity);
\item
$\phi(b^{*}b)\geqslant 0$ for all $b\in {\rm ker}\,\varepsilon$ (conditional positivity).
\end{enumerate}

We are interested in generating functionals, because it follows from \cite[Section 3.2]{schurmann93} that they are in one-to-one correspondence with convolution semigroups of states.

\begin{proposition}
Let $\B$ be an involutive bialgebra and let $\phi : \B\to \C$ a linear functional. Set, for $t\in \R_{+}$,
\begin{equation*}
\varphi_{t} = \exp_{\star}(t\phi) := \sum_{n=0}^{+\infty} \frac{(t\phi)^{\star n}}{n!},
\end{equation*}
with the convention $\phi^{\star 0} = \varepsilon$. Then, the following are equivalent:
\begin{enumerate}
\item 
$\phi$ is a generating functional;
\item 
$\varphi_{t}$ is a \emph{state} for all $t\geqslant 0$, in the sense that
\begin{enumerate}
\item 
$\varphi_t(\mathbf{1})=1$;
\item 
$\varphi_t(b^{*}b)\geqslant 0$ for all $b\in \B$. 
\end{enumerate}
\end{enumerate}
\end{proposition}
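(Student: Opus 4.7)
The plan is to prove the two implications separately. The direction (2) $\Rightarrow$ (1) is elementary and proceeds by differentiating the family $(\varphi_t)_{t\geq 0}$ at $t=0$. The direction (1) $\Rightarrow$ (2) is the substantive one; establishing positivity of $\varphi_t$ cannot be done by a direct manipulation of the convolution series, and I would invoke Sch\"urmann's representation theorem together with a Fock-space realisation.

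For (2) $\Rightarrow$ (1), the series defining $\varphi_t(b)$ is a power series in $t$ with constant term $\varepsilon(b)$ and linear coefficient $\phi(b)$, so $\varphi_0 = \varepsilon$ and $\phi = \frac{d}{dt}\big|_{t=0}\varphi_t$. Differentiating the identity $\varphi_t(\mathbf{1})=1$ yields $\phi(\mathbf{1})=0$. States on a unital $*$-algebra are automatically hermitian (apply positivity to $(\lambda\mathbf{1}+b)^*(\lambda\mathbf{1}+b)$ and vary $\lambda\in\C$), so $\varphi_t(b^*)=\overline{\varphi_t(b)}$; differentiating in $t$ then gives hermitianity of $\phi$. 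For conditional positivity, if $b\in\ker\varepsilon$ then $\varphi_0(b^*b)=\varepsilon(b^*)\varepsilon(b)=0$ while $\varphi_t(b^*b)\geq 0$ for $t\geq 0$, so the right-hand derivative at $0$, which equals $\phi(b^*b)$, is nonnegative.

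For (1) $\Rightarrow$ (2), normalization is immediate: since $\Delta(\mathbf{1})=\mathbf{1}\otimes\mathbf{1}$ one has $\phi^{\star n}(\mathbf{1})=\phi(\mathbf{1})^n=0$ for all $n\geq 1$, so only the constant term survives and $\varphi_t(\mathbf{1})=\varepsilon(\mathbf{1})=1$. The real obstacle is positivity of $\varphi_t(b^*b)$ for arbitrary $b\in\B$, since conditional positivity only controls $\phi$ on $\ker\varepsilon$ and no finite truncation of the exponential series is positive in general. The standard route, which I would follow, is Sch\"urmann's representation theorem: from $\phi$ one extracts a Sch\"urmann triple $(\pi,\eta,\phi)$ consisting of a unital $*$-representation $\pi:\B\to L(D)$ on a pre-Hilbert space $D$ and a $(\pi,\varepsilon)$-cocycle $\eta:\B\to D$, related to $\phi$ by the canonical sesquilinear identity coming from polarisation of conditional positivity on $\ker\varepsilon$.

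One then builds the associated quantum L\'evy process on the symmetric Fock space $\Gamma(L^2(\R_+,D))$ via quantum stochastic calculus and identifies $\varphi_t$ with its vacuum expectation at time $t$; positivity of $\varphi_t$ follows tautologically from this realisation. The technical core, and the only real obstacle, is thus the Fock-space construction itself, whose details are worked out in Section~3.2 of Sch\"urmann's monograph. Everything else in the statement is either a formal consequence of the series definition or a standard fact about states on $*$-algebras.
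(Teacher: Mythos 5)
Your proposal is correct and in line with the paper, which offers no proof of this proposition at all: it simply records the statement as a consequence of \cite[Section 3.2]{schurmann93}, which is exactly where you delegate the substantive implication (1)\,$\Rightarrow$\,(2) via the Sch\"urmann triple and the Fock-space realisation. Your differentiation-at-$t=0$ argument for (2)\,$\Rightarrow$\,(1) is the standard one and is sound.
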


By a GNS-type construction one can associate a so-called \emph{Sch\"urmann triple} to a generating functional, cf.\ \cite[Subsection 2.3]{schurmann93}. Let us recall how this works. For an inner product space $D$ (pre-Hilbert space), we denote by
\[
\mathcal{L}(D) = \big\{ X:D\to D\text{ linear} \mid \exists X^* : D\to D \text{ s.t.\ }\forall u,v\in D,  \langle u,Xv\rangle = \langle X^*u,v\rangle \big\}
\]
the *-algebra of adjointable linear maps on $D$.

\begin{definition}
Let $(\B, \varepsilon)$ be a pair consisting of a *-algebra $\B$ and a *-homorphism $\varepsilon : \B\to \C$ (an augmented *-algebra), and let $D$ be a pre-Hilbert space. A Sch\"urmann triple on $(\B, \varepsilon)$ over $D$ is a family of three linear maps $(\rho : \B\to \mathcal{L}(D), \eta : \B\to D, \phi : \B\to \C)$ such that
\begin{enumerate}
\item 
$\rho$ is a unital *-homomorphism;
\item 
$\eta$ satisfies
\begin{equation}\label{eq-cocycle}
\eta(ab) = \rho(a)\eta(b) + \eta(a)\varepsilon(b)
\end{equation}
for all $a, b\in \B$,
\item 
$\phi$ is hermitian and satisfies
\begin{equation} \label{eq-cobound}
\phi(a^{*}b)-\overline{\varepsilon(a)}\phi(b) - \overline{\phi(a)}\varepsilon(b) = \langle\eta(a),\eta(b)\rangle
\end{equation}
for all $a ,b\in \B$.
\end{enumerate}
\end{definition}

Relations \eqref{eq-cocycle} and \eqref{eq-cobound} imply $\eta(\mathbf{1}) = 0$ and $\phi(\mathbf{1})=0$. Relation \eqref{eq-cobound} furthermore shows that $\phi$ is positive on the kernel of $\varepsilon$, hence $\phi$ is a generating functional.

We call two Sch\"urmann triples $(\rho, \eta, \phi)$ and $(\rho', \eta', \phi')$ on the same augmented *-algebra $(\B, \varepsilon)$ and over pre-Hilbert spaces $D$ and $D'$ \emph{equivalent}, if there exists a surjective isometry $V : D\to D'$ s.t.
\begin{equation*}
V\eta(b) = \eta'(b) \quad\text{ and }\quad V\rho(b) = \rho'(b)V
\end{equation*}
for all $b\in \B$. Note that we thus get one-to-one correspondences between convolution semigroups of states, generating functionals, and Sch\"urmann triples with surjective cocycle (up to equivalence) on a given involutive bialgebra $\B$. Sch\"urmann \cite{schurmann93} proved that these three families of objects are also in one-to-one correspondence with \emph{L\'evy processes} on $\B$ (up to stochastic equivalence).

We now introduce a family of ideals which will play a crucial rôle in this work. We set $K_{1}(\B) = \ker(\varepsilon)$ and $K_{n}(\B) = K_{1}(\B)^{n}$ for $n\geqslant 1$, or more explicitly
\begin{equation*}
K_{n}(\B) = \mathrm{Span}\{b_{1}\cdots b_{n} \mid b_{1}, \cdots, b_{n}\in \ker(\varepsilon)\}
\end{equation*}
We also set
\begin{equation*}
K_{\infty}(\B) = \bigcap_{n\geqslant 1} K_{n}(\B).
\end{equation*}
The family $(K_{n}(\B))_{n=1}^{+\infty}$ is decreasing, and the containments can be proper, as we will see later on. In order to lighten notations, we will simply write $K_{n}$ as soon as there is no ambiguity concerning the algebra $\B$.

If $\B = \Pol(\QG)$, then $K_{1}$ is also a coideal and therefore a Hopf *-ideal, as it is the kernel of a Hopf *-algebra homomorphism. For $n\geqslant 2$ however, $K_{n}$ is in general not a coideal but nevertheless defines a filtration since
\begin{equation*}
\Delta(K_{n}) \subseteq \sum_{\ell=0}^{n} K_{\ell}\otimes K_{n-\ell} \subseteq K_{\lfloor \frac{n}{2} \rfloor} \otimes \B + \B \otimes K_{\lfloor \frac{n}{2} \rfloor}
\end{equation*}
(where $K_{0} = \B$ by convention). It turns out that if $\B$ is a Hopf algebra, then $K_{\infty}$ is a Hopf ideal, see Proposition \ref{prop:hopfideal}.

\begin{example}
Consider the *-algebra $\B = \C[x]$ of polynomials in one self-adjoint variable (i.e.\ $x^{*} = x$) and the augmentation map determined by $\varepsilon(x) = 0$. Then the ideal $K_{n}$ consists exactly of the polynomials that have a zero of order at least $n$ at the origin, and $K_{\infty} = \{0\}$.
\end{example}

We are now ready for the definition of Gaussian processes, which are the main subject of this work.

\begin{definition}
A generating functional $\phi : \B\to \C$ on an augmented *-algebra $\B$ is called \emph{Gaussian} (or quadratic, \cite[Section 5.1]{schurmann93}), if $\phi_{\mid K_{3}} = 0$.

A state on $\B$ is called Gaussian, if it is of the form $\varphi = \exp_{\star}(t\phi)$ for some $t\geqslant 0$ and $\phi$ a Gaussian generating functional. A cocycle $\eta : \B\to H$ is called Gaussian, if it is a derivation in the sense that $\eta(ab) = \varepsilon(a)\eta(b)+\eta(a)\varepsilon(b)$ for all $a, b\in \B$.
\end{definition}

The connection between the last definition and the first two ones is not obvious and relies on the following result.

\begin{proposition}\cite[Proposition 5.1.1]{schurmann93} \label{prop-gauss}
Let $\B$ be an augmented *-algebra and let $(\rho, \eta, \phi)$ be a Sch\"urmann triple on $\B$ over some pre-Hilbert space $H$ with surjective cocycle $\eta$. Then, the following are equivalent:
\begin{enumerate}
\item 
$\phi_{\mid K_{3}} = 0$,
\item 
$\phi(a^{*}a) = 0$ for all $a\in K_{2}$,
\item 
$\rho_{\mid K_{1}} = 0$,
\item 
$\rho(b) = \varepsilon(b)\mathrm{id}_{H}$ for all $b\in\B$,
\item 
$\eta_{\mid K_{2}} = 0$,
\item 
$\eta(ab) = \varepsilon(a)\eta(b) + \eta(a)\varepsilon(b)$ for all $a, b\in\B$.
\end{enumerate}
\end{proposition}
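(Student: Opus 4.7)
The plan is to exploit the decomposition $\B = K_1 \oplus \C\mathbf{1}$ together with the cocycle identity \eqref{eq-cocycle} and the cobound identity \eqref{eq-cobound} in order to reduce everything to statements about $K_1$ and $K_2$. Two preliminary remarks feed into all the arguments: plugging $a=b=\mathbf{1}$ into \eqref{eq-cocycle} gives $\eta(\mathbf{1}) = 2\eta(\mathbf{1})$, hence $\eta(\mathbf{1})=0$; and $K_1$ is $*$-stable because $\varepsilon(b^*)=\overline{\varepsilon(b)}$, so in particular $K_2$ is $*$-stable and $K_2\cdot K_2 \subset K_4 \subset K_3$. With these in hand, I would establish the equivalences along the chain (4)$\Leftrightarrow$(3)$\Leftrightarrow$(5)$\Leftrightarrow$(6), (5)$\Leftrightarrow$(2), and (1)$\Leftrightarrow$(2).

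The equivalences (3)$\Leftrightarrow$(4) and (5)$\Leftrightarrow$(6) are essentially bookkeeping. Any $b\in\B$ decomposes as $(b-\varepsilon(b)\mathbf{1})+\varepsilon(b)\mathbf{1}$ with the first summand in $K_1$, so, knowing that $\rho$ is unital, the condition $\rho|_{K_1}=0$ is equivalent to $\rho(b)=\varepsilon(b)\id_H$. Likewise, combining the same decomposition for $a$ and $b$ with $\eta(\mathbf{1})=0$, the condition $\eta|_{K_2}=0$ translates verbatim into the derivation property of $\eta$.

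The less immediate core of the proposition consists in linking $\rho$, $\eta$ and $\phi$ through the two defining identities. For (3)$\Leftrightarrow$(5), I would specialise \eqref{eq-cocycle} to $a,b\in K_1$, where it collapses to $\eta(ab)=\rho(a)\eta(b)$; the implication $\rho|_{K_1}=0 \Rightarrow \eta|_{K_2}=0$ is immediate, and the converse uses that $\eta(K_1)=\eta(\B)=H$ (this is where the surjectivity of the cocycle, combined with $\eta(\mathbf{1})=0$, is used) to deduce that $\rho(a)$ annihilates all of $H$ for every $a\in K_1$. For (2)$\Leftrightarrow$(5), specialising \eqref{eq-cobound} to $a=b\in K_1$ and using $\varepsilon(a)=0$ yields $\phi(a^*a)=\|\eta(a)\|^2$ for every $a\in K_1$, and in particular for $a\in K_2$. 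Finally (1)$\Leftrightarrow$(2): since $K_1$ is $*$-stable, any $a\in K_2$ satisfies $a^*a\in K_4\subset K_3$, giving (1)$\Rightarrow$(2); and the reverse direction follows by applying \eqref{eq-cobound} with $a=b_1^*\in K_1$ and $b=b_2b_3\in K_2$ for arbitrary $b_1,b_2,b_3\in K_1$, since each of the three terms on the right-hand side vanishes, two by $\varepsilon|_{K_1}=0$ and one by (5).

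The only place where a non-algebraic hypothesis intervenes is the converse in (3)$\Leftrightarrow$(5): without the surjectivity of the cocycle, $\rho|_{K_1}$ could be non-zero while still annihilating the range of $\eta$, so the implication (5)$\Rightarrow$(3) would fail. This is the main (modest) obstacle; everywhere else the proposition is a clean unwinding of the two defining identities of a Schürmann triple.
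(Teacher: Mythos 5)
Your argument is correct and complete: the chain (4)$\Leftrightarrow$(3)$\Leftrightarrow$(5)$\Leftrightarrow$(6), (5)$\Leftrightarrow$(2), (1)$\Leftrightarrow$(2) connects all six conditions, each link is a valid specialisation of \eqref{eq-cocycle} or \eqref{eq-cobound}, and you correctly isolate the single place where surjectivity of $\eta$ is needed, namely (5)$\Rightarrow$(3). Note that the paper does not prove this proposition at all -- it is quoted from \cite[Proposition 5.1.1]{schurmann93} -- so there is no in-paper argument to compare against; your proof is a legitimate self-contained substitute for that citation, and it matches the standard argument in Sch\"urmann's book in spirit (reduce everything to $K_1$ and $K_2$ via $\B=K_1\oplus\C\mathbf{1}$ and the two defining identities of the triple).
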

Note that the first property translates into the following condition, valid for all $a, b, c \in \B$:
\begin{equation*}
\phi(abc) = \phi(ab) \varepsilon(c) + \phi(ac) \varepsilon(b) + \phi(bc) \varepsilon(a) - \phi(a) \varepsilon(bc) - \phi(b) \varepsilon(ac) - \phi(c) \varepsilon(ab).
\end{equation*}
This gives an inductive algorithm to compute $\phi$ which leads to a Wick-type formula, see Subsection \ref{subsec:wick}.

\begin{remark}\label{remclass}
Assume that $\B$ is an augmented *-algebra, that $X \subset \B$ generates $\B$ as a *-algebra, and let $\phi:\B\to \C$ be a Gaussian generating functional with associated cocycle $\eta: \B\to D$. Then, the following conditions are equivalent:
\begin{itemize}
\item[(i)] 
$\phi(xy) = \phi(yx)$ for all $x, y \in X$; 
\item[(ii)] 
$\langle \eta(x^{*}), \eta (y)\rangle = \langle \eta(y^{*}), \eta (x)\rangle$ for all $x, y \in X$;
\item[(iii)] 
$\phi$ is a trace: $\phi(ab) = \phi(ba)$ for all $a, b \in \B$;
\item[(iv)] 
$\phi$ factors through the commutator ideal of $\B$.
\end{itemize}
Indeed, (i) and (ii) are equivalent by \eqref{eq-cobound}. Then, it suffices to observe that if (i) holds then we can prove first (iii) and then (iv) using the formula displayed before the remark; the other implications are trivial. Even though we will not need nor use the terminology hereafter, such functionals may be called \emph{classical} since they factor through a commutative algebra.
\end{remark}

Looking at the definition of a Gaussian generating functional, one may wonder at the definition obtained by strengthening the condition to $\phi_{\mid K_{2}} = 0$. A functional satisfying this condition called a \emph{drift} (or \emph{degenerate quadratic}, \cite[Section 5.1]{schurmann93}). Drifts can be characterized through their Sch\"urmann triples similarly to Gaussian functionals. 
\begin{proposition}
Under the same assumptions as in Proposition \ref{prop-gauss}, the following are equivalent:
\begin{enumerate}
\item 
$\phi_{\mid K_{2}} = 0$;
\item 
$\phi$ is a hermitian derivation, i.e.\ $\phi(a^{*}) = \overline{\phi(a)}$ and
\begin{equation*}
\phi(ab) = \varepsilon(a)\phi(b)+\phi(a)\varepsilon(b)
\end{equation*}
for all $a, b \in\B$;
\item 
$\phi(a^{*}a) = 0$ for all $a\in K_{1}$.
\end{enumerate}
\end{proposition}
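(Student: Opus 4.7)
The plan is to establish the chain $(1)\Leftrightarrow(2)$ and $(1)\Leftrightarrow(3)$; the implication $(3)\Rightarrow(1)$ is the only one where the Schürmann triple hypothesis is truly used.

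For $(1)\Rightarrow(2)$, I would decompose an arbitrary element as $a = \widetilde a + \varepsilon(a)\mathbf{1}$, where $\widetilde a = a-\varepsilon(a)\mathbf{1}\in K_{1}$. Expanding $ab = \widetilde a\widetilde b + \varepsilon(a)\widetilde b + \varepsilon(b)\widetilde a + \varepsilon(a)\varepsilon(b)\mathbf{1}$ and using $\widetilde a\widetilde b\in K_{2}$ together with $\phi(\mathbf{1})=0$, the first and last term drop out and one is left with $\phi(ab) = \varepsilon(a)\phi(b)+\phi(a)\varepsilon(b)$. Hermitianity of $\phi$ is automatic since $\phi$ is a generating functional. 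Conversely, for $(2)\Rightarrow(1)$, if $a,b\in K_{1}$ the derivation identity gives $\phi(ab) = 0+0=0$, so $\phi$ vanishes on the spanning set of $K_{2}$.

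The implication $(1)\Rightarrow(3)$ is immediate: if $a\in K_{1}$ then $\varepsilon(a^{*})=\overline{\varepsilon(a)}=0$, hence $a^{*}a\in K_{2}$ and $\phi(a^{*}a)=0$.

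The heart of the argument is $(3)\Rightarrow(1)$. Taking $a=b\in K_{1}$ in the coboundary relation \eqref{eq-cobound} and using $\varepsilon(a)=0$ yields
\begin{equation*}
\|\eta(a)\|^{2} = \langle \eta(a),\eta(a)\rangle = \phi(a^{*}a),
\end{equation*}
so the hypothesis forces $\eta_{|K_{1}}=0$. Now for arbitrary $x,y\in K_{1}$, setting $a=x^{*}\in K_{1}$ (again using that $\varepsilon$ is a $*$-homomorphism) and $b=y$ in \eqref{eq-cobound} gives
\begin{equation*}
\phi(xy) = \phi(a^{*}b) = \langle \eta(a),\eta(b)\rangle + \overline{\varepsilon(a)}\phi(b)+\overline{\phi(a)}\varepsilon(b) = 0.
\end{equation*}
Since such products span $K_{2}$, this gives $\phi_{|K_{2}}=0$.

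The only subtle step is recognising that one needs to pass through the cocycle $\eta$ in order to turn the diagonal vanishing statement $\phi(a^{*}a)=0$ into vanishing on all of $K_{2}$; the other implications are purely algebraic manipulations with the augmentation ideal.
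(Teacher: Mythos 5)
Your argument is correct. Note that the paper itself states this proposition without proof (it is the drift analogue of Schürmann's Proposition 5.1.1), so there is no in-text argument to compare against; what you give is the standard one, and all the steps check out: the centering decomposition $a=\widetilde a+\varepsilon(a)\mathbf{1}$ handles $(1)\Leftrightarrow(2)$, and for $(3)\Rightarrow(1)$ you correctly use \eqref{eq-cobound} with $a=b\in K_1$ and positive definiteness of the inner product on the pre-Hilbert space $D$ to get $\eta_{\mid K_1}=0$, then apply \eqref{eq-cobound} again to kill all of $K_2$. Your closing observation is also accurate: only $(3)\Rightarrow(1)$ genuinely uses the Schürmann triple (and only via the cocycle $\eta$; surjectivity of $\eta$ is never needed), while the remaining implications use nothing beyond $\phi(\mathbf{1})=0$ and hermitianity.
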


\begin{remark} \label{rem:drift}
A generating functional $\phi$ is a drift if and only if $\varphi_{t} = \exp_{\star}(t\phi)$ is a character for all $t\in \R_{+}$. One way is easy: if $\varphi_{t}$ is a character for all $t\geqslant 0$, then for all $a, b\in \B$ we have 
\begin{equation*}
\phi(ab) = \lim_{t\to 0}\frac{\varphi_{t}(ab) - \varepsilon(ab)}{t} = \lim_{t\to 0}\frac{\varphi_{t}(a)\varphi_{t}(b) - \varepsilon(a)\varepsilon(b)}{t} = \phi(a)\varepsilon(b) + \phi(b)\varepsilon(a)
\end{equation*}
and combining this with the hermitian property which comes from the hermitian property of $\varphi_{t}$, we conclude by the second point of the above proposition. Conversely, if $\phi$ is a drift then a straightforward induction shows that for all $a, b \in \B$ and all $k\geqslant 0$,
\begin{equation*}
\phi^{\ast k}(ab) = \sum_{p=0}^{k}\binom{k}{p}\phi^{\ast p}(a)\phi^{\ast (k-p)}(b)
\end{equation*}
with $\phi^{\ast 0} = \varepsilon$. Multiplicativity of $\varphi_{t}$ then follows from the equalities
\begin{align*}
\varphi_{t}(a)\varphi_{t}(b)&\ = \sum_{k, k' = 0}^{+\infty}\frac{t^{k+k'}}{k!k'!}\phi^{\ast k}(a)\phi^{\ast k'}(b) \\
& = \sum_{r=0}^{+\infty}\frac{t^{r}}{r!}\sum_{i=0}^{r}\frac{r!}{i!(r-i)!}\phi^{\ast i}(a)\phi^{\ast (r-i)}(b) \\
& = \sum_{r=0}^{+\infty}\frac{t^{r}}{r!}\phi^{\ast r}(ab) = \varphi_{t}(ab).
\end{align*}
\end{remark}

\subsection{A Wick-type formula for Gaussian generating functionals}\label{subsec:wick}

The defining property of Gaussian generating functionals gives a way to compute their value on a product of elements by centering them and then applying the property recursively. This can be turned into a closed formula which is reminiscent of the Wick formula for operators on the full Fock space.

\begin{proposition}\label{prop:wick}
Let $\phi$ be a Gaussian generating functional on $\B$. Then we have for any $n\geqslant 2$ and any $a_{1}, \cdots, a_{n}\in \B$,
\begin{align*}
\phi(a_{1}\cdots a_{n}) & = \sum_{1\leqslant j< k\leqslant n} \phi(a_{j}a_{k})\varepsilon\left(a_{1}\cdots \widecheck{a_{j}}\cdots \widecheck{a_{k}}\cdots a_{n}\right) \\
& - (n-2)\sum_{1\leqslant j\leqslant n} \phi(a_{j})\varepsilon(a_{1}\cdots \widecheck{a_{j}}\cdots a_{n}),
\end{align*}
where $\widecheck{a_{j}}$ means that this factor is omitted from the product.
\end{proposition}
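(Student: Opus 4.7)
\medskip

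The plan is to give a direct combinatorial proof by centering each factor. For each $i$, write $a_i = \widetilde{a}_i + \varepsilon(a_i)\mathbf{1}$ where $\widetilde{a}_i := a_i - \varepsilon(a_i)\mathbf{1}\in K_1$. Expanding the product yields
\begin{equation*}
a_1\cdots a_n = \sum_{S\subseteq\{1,\dots,n\}} \Bigl(\prod_{i\notin S}\varepsilon(a_i)\Bigr)\prod_{i\in S}^{\rightarrow}\widetilde{a}_i,
\end{equation*}
where the right-hand product is taken in the order inherited from $\{1,\dots,n\}$. Since $\widetilde{a}_i\in K_1$, the product indexed by $S$ lies in $K_{|S|}$, and because $\phi$ is Gaussian it vanishes on $K_3$ by definition. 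Hence only the contributions with $|S|\leqslant 2$ survive when we apply $\phi$, and the $|S|=0$ term vanishes too since $\phi(\mathbf{1})=0$.

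What remains is bookkeeping. For $|S|=1$, using $\phi(\mathbf{1})=0$, the contribution is exactly
\begin{equation*}
\sum_{j=1}^n \phi(a_j)\,\varepsilon(a_1\cdots\widecheck{a_j}\cdots a_n).
\end{equation*}
For $|S|=\{j<k\}$, expanding $\phi(\widetilde{a}_j\widetilde{a}_k)=\phi(a_ja_k)-\varepsilon(a_k)\phi(a_j)-\varepsilon(a_j)\phi(a_k)$ (the last $\phi(\mathbf{1})$ term again vanishes) and absorbing the remaining $\varepsilon(a_j)$ or $\varepsilon(a_k)$ factors back into the outside product, I obtain
\begin{equation*}
\sum_{1\leqslant j<k\leqslant n}\phi(a_ja_k)\,\varepsilon(a_1\cdots\widecheck{a_j}\cdots\widecheck{a_k}\cdots a_n) - \sum_{1\leqslant j<k\leqslant n}\bigl[\phi(a_j)\,\varepsilon(a_1\cdots\widecheck{a_j}\cdots a_n) + \phi(a_k)\,\varepsilon(a_1\cdots\widecheck{a_k}\cdots a_n)\bigr].
\end{equation*}
Each index $m\in\{1,\dots,n\}$ appears in the bracketed double sum exactly $n-1$ times (as either the smaller or larger of the pair), so that part collapses to $-(n-1)\sum_m\phi(a_m)\varepsilon(a_1\cdots\widecheck{a_m}\cdots a_n)$. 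Adding this to the $|S|=1$ contribution produces the coefficient $-(n-1)+1=-(n-2)$ that appears in the statement, completing the proof.

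There is no real obstacle: the only subtleties are making sure the ordering in the expansion is respected (irrelevant for applying $\phi$ at the end, but important when writing the expansion), and the purely combinatorial counting that turns the sum over pairs into the sum over single indices with multiplicity $n-1$. An alternative route would be induction on $n$ starting from the $n=3$ identity stated after Proposition~\ref{prop-gauss}, applied to the triple $(a_1,a_2,a_3\cdots a_n)$, but the bookkeeping there is heavier than in the centering argument above.
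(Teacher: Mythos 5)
Your proof is correct, and it takes a genuinely different route from the paper's. The paper argues by induction on $n$: it treats $a_na_{n+1}$ as a single factor, applies the induction hypothesis to the resulting $n$-fold product, and then uses the $n=3$ identity to expand the terms $\phi(a_ja_na_{n+1})$, after which the coefficients reassemble into the claimed formula. Your centering argument instead expands $a_1\cdots a_n$ over subsets $S$ after writing $a_i=\widetilde{a}_i+\varepsilon(a_i)\mathbf{1}$, kills all terms with $|S|\geqslant 3$ using $\widetilde{a}_i\in K_1$ and $\phi_{\mid K_3}=0$, and then does the counting; the key count (each index occurring $n-1$ times in the pair sum) is right, and the resulting coefficient $-(n-1)+1=-(n-2)$ matches. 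What your approach buys is transparency: it exhibits the formula as nothing more than the statement that $\phi$ is determined by its values on $\mathbf{1}$, single centered elements, and pairs of centered elements, with no inductive bookkeeping; the paper's induction, by contrast, stays entirely within the list of equivalent characterisations of Gaussianity from Proposition \ref{prop-gauss} and avoids introducing the subset expansion. Both are complete proofs; yours is arguably the more conceptual one.
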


\begin{proof}
For $n = 2$ this is trivially true, and for $n = 3$ it is one of the equivalent characterisations in Proposition \ref{prop-gauss}.

The general case follows by induction. Let us first observe that for any $j,n \in \mathbb{N}$,
\begin{align*}
\phi(a_{j}a_{n}a_{n+1}) & = \phi(a_{j}a_{n})\varepsilon(a_{n+1}) + \phi(a_{j}a_{n+1})\varepsilon(a_{n}) + \phi(a_{n}a_{n+1})\varepsilon(a_{j}) \\
& - \phi(a_{j})\varepsilon(a_{n}a_{n+1}) - \phi(a_{n})\varepsilon(a_{j}a_{n+1}) - \phi(a_{n+1})\varepsilon(a_{j}a_{n}).
\end{align*}
Now, let $n\geqslant 3$ and $a_{1}, \cdots, a_{n+1}\in \B$. Then we have
\begin{align*}
\phi\big(a_{1}\cdots (a_{n}a_{n+1})\big) & = \sum_{1\leqslant j < k < n} \phi(a_{j}a_{k})\varepsilon\left(a_{1}\cdots \widecheck{a_{j}}\cdots \widecheck{a_{k}}\cdots a_{n}a_{n+1}\right) \\
& + \sum_{1\leqslant j < n}\underbrace{\phi(a_{j}a_{n}a_{n+1})}\varepsilon\left(a_{1}\cdots\widecheck{a_{j}}\cdots a_{n-1}\right) \\
& - (n-2)\sum_{1\leqslant j < n} \phi(a_{j})\varepsilon(a_{1}\cdots \widecheck{a_{j}}\cdots a_{n}a_{n+1}) \\
& - (n-2)\phi(a_{n}a_{n+1})\varepsilon(a_{1}\cdots a_{n-1}) \\
& = \sum_{1\leqslant j < k \leqslant n+1} \phi(a_{j}a_{k})\varepsilon\left(a_{1}\cdots \widecheck{a_{j}}\cdots \widecheck{a_{k}}\cdots a_{n+1}\right) \\
& - (n-1)\sum_{1\leqslant j\leqslant n+1} \phi(a_{j})\varepsilon(a_{1}\cdots \widecheck{a_{j}}\cdots a_{n+1}).
\end{align*}
\end{proof}

This implies immediately the following lemma which will be useful to determine conditions guaranteeing that Gaussian generating functionals vanish on certain ideals.

\begin{corollary}\label{lem-ideal}\label{cor:wickformula}
Assume that we have two subsets $X = \{a_{1}, \cdots, a_{n}\}\subseteq \B$ and $Y = \{b_{1},\cdots, b_{m}\}\subseteq \ker(\varepsilon)$ such that
\begin{enumerate}
\item $X$ generates $\B$ as an algebra;
\item $0 = \phi(b_{k}) = \phi(a_{j}b_{k}) = \phi(b_{k}a_{j})$ for all $j\in\{1, \cdots, n\}$ and all $k\in\{1, \cdots, m\}$.
\end{enumerate}
Then $\phi$ vanishes on the ideal generated by $Y$.
\end{corollary}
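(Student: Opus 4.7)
The plan is to use the Wick-type formula of Proposition \ref{prop:wick} and exploit the multiplicativity of $\varepsilon$ together with the hypothesis $b_k\in\ker(\varepsilon)$ to reduce a general product in the ideal to the three quantities that are assumed to vanish.

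By linearity of $\phi$ and the fact that $X$ generates $\B$ as an algebra, any element of the two-sided ideal generated by $Y$ is a linear combination of elements of the form
\[
a_{i_1}\cdots a_{i_p}\, b_k\, a_{j_1}\cdots a_{j_q}
\]
with $p,q\geqslant 0$, indices in $\{1,\dots,n\}$, and $k\in\{1,\dots,m\}$. (The cases $p=0$ or $q=0$ correspond to an empty product equal to $\mathbf{1}$.) So it is enough to show $\phi$ vanishes on each such monomial.

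For $p+q\leqslant 1$ the value is either $\phi(b_k)$, $\phi(a_{i_1}b_k)$, or $\phi(b_ka_{j_1})$, all zero by hypothesis. Assume therefore $N:=p+q+1\geqslant 3$ and relabel the factors as $c_1,\dots,c_N$, so that $c_{p+1}=b_k$ and all other $c_s$ lie in $X$. Apply Proposition \ref{prop:wick}:
\[
\phi(c_1\cdots c_N) = \sum_{1\leqslant s<t\leqslant N}\phi(c_sc_t)\,\varepsilon\bigl(c_1\cdots\widecheck{c_s}\cdots\widecheck{c_t}\cdots c_N\bigr)
- (N-2)\sum_{1\leqslant s\leqslant N}\phi(c_s)\,\varepsilon\bigl(c_1\cdots\widecheck{c_s}\cdots c_N\bigr).
\]
Since $\varepsilon$ is a homomorphism and $\varepsilon(b_k)=\varepsilon(c_{p+1})=0$, each factor $\varepsilon(c_1\cdots\widecheck{\cdots}\cdots c_N)$ vanishes unless the omitted index set contains $p+1$. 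Thus, in the first sum the surviving contributions are exactly those with $s=p+1$ or $t=p+1$, which produce scalar multiples of $\phi(b_k\,a_{j_{t-p-1}})$ or $\phi(a_{i_s}\,b_k)$; in the second sum only $s=p+1$ survives, giving a multiple of $\phi(b_k)$. All three quantities are zero by assumption, so $\phi(c_1\cdots c_N)=0$.

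There is no real obstacle: the argument is essentially a bookkeeping of which terms in the Wick expansion survive once one observes that $\varepsilon$ annihilates any product still containing $b_k$. The only care needed is to separately handle the small cases $p+q\leqslant 1$ where the Wick formula is either trivial or amounts to the defining identity for Gaussianity.
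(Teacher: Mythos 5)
Your proof is correct and takes essentially the same route as the paper: reduce to monomials of the form $a_{i_1}\cdots a_{i_p}\,b_k\,a_{j_1}\cdots a_{j_q}$ and apply the Wick formula of Proposition \ref{prop:wick}, observing that multiplicativity of $\varepsilon$ together with $\varepsilon(b_k)=0$ annihilates every term except those of the form listed in hypothesis (2). You merely spell out the bookkeeping that the paper's proof leaves implicit.
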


\begin{proof}
It suffices to show that $\phi(a_{j_{1}}\cdots a_{j_{s}}b_{k}a_{j_{s+1}}\cdots a_{j_{s+t}})$ vanishes for all $s, t\in \mathbb{N} \cup\{0\}$, $j_{1}, \cdots, j_{s+t}\in\{1, \cdots, n\}$, $k\in\{1, \cdots, m\}$. Proposition \ref{prop:wick} allows to do that by reducing this value to a linear combination of terms of the form appearing in Condition (2).
\end{proof}

\section{The Gaussian part of a compact quantum group} \label{sec:Gaussianpart}

In this section we restrict the context of our study to Hopf $^*$-algebras related to compact quantum groups. We introduce the notion of the Gaussian part of a compact quantum group, discuss its basic properties, and prove that it is neccessarily a quantum group of Kac type.

\subsection{Compact quantum groups}

In this work, we are interested in Gaussian processes on compact quantum groups. We will therefore briefly introduce these objects. We refer the reader to \cite{timmermann2008invitation} and \cite{nt13} for detailed treatments of the theory. It is known since the work of M. Dijkhuizen and T. Koornwinder \cite{dijkhuizen1994CQG} that compact quantum groups can be treated algebraically through the following notion of a CQG-algebra.

\begin{definition}
A \emph{CQG-algebra} is a Hopf $*$-algebra which is spanned by the coefficients of its finite-dimensional unitary corepresentations.
\end{definition}

If $G$ is a compact group, then its algebra of regular functions $\Pol(G)$ is a CQG-algebra. Based on that example, and in an attempt to retain the intuition coming from the classical setting, we will denote a general CQG-algebra by $\Pol(\QG)$ and say that it corresponds to the \emph{compact quantum group} $\QG$. If $\Gamma$ is a discrete group and $\C[\Gamma]$ denotes its group algebra, it is easy to endow it with a Hopf $*$-algebra structure with coproduct given by $\Delta(g) = g\otimes g$ for all $g\in \Gamma$. Since this turns each $g\in \Gamma\subset\C[\Gamma]$ into a one-dimensional co-representation, it yields a CQG-algebra. The resulting compact quantum group is called the \emph{dual} of $\Gamma$ and is denoted by $\widehat{\Gamma}$.

We will at some point use arguments involving representation theory of compact quantum groups, which is just another point of view on the corepresentation theory of the corresponding CQG-algebra. Let us give a definition to make this clear.

\begin{definition}
An \emph{$n$-dimensional representation} of a compact quantum group $\QG$ is an element $v\in M_{n}(\Pol(\QG))$ which is invertible and such that for all $1\leqslant i, j\leqslant n$,
\begin{equation*}
\Delta(v_{ij}) = \sum_{k=1}^{n}v_{ik}\otimes v_{kj}.
\end{equation*}
It is said to be \emph{unitary} if it is unitary as an element of $M_{n}(\Pol(\QG))$.
\end{definition}

Given two representations $v$ and $w$, one can form their direct sum by considering a block diagonal matrix with blocks $v$ and $w$ respectively, and their tensor product by considering the matrix with coefficients
\begin{equation*}
(v\otimes w)_{(i,k),(j,\ell)} = v_{ij}w_{k\ell}.
\end{equation*}
In this setting, an intertwiner between two representations $v$ and $w$ of dimension respectively $n$ and $m$ will be a linear map $T : \C^{n}\to \C^{m}$ such that $Tv = wT$ (we are here identifying $M_{n}(\C)$ with $M_{n}(\C.1_{\Pol(\QG)})\subset M_{n}(\Pol(\QG))$). The set of all intertwiners between $v$ and $w$ will be denoted by $\displaystyle\mathrm{Mor}_{\QG}(v, w)$.

If $T$ is injective, then $v$ is said to be a \emph{subrepresentation} of $w$, and if $w$ admits no non-zero subrepresentation apart from itself, then it is said to be \emph{irreducible}. One of the fundamental results in the representation theory of compact quantum groups is due to S.L. Woronowicz in \cite{woronowicz1995compact} and can be summarized as follows:

\begin{theorem}[Woronowicz]
Any finite-dimensional representation of a compact quantum group splits as a direct sum of irreducible ones, and any irreducible representation is equivalent to a unitary one.
\end{theorem}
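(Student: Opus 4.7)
The plan is to establish the two conclusions in sequence: every finite-dimensional representation is equivalent to a unitary one, from which complete reducibility then follows via an orthogonal-complement argument, in full analogy with the classical compact group case.

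The key prerequisite I would invoke is the existence of a faithful \emph{Haar state} $h$ on the CQG-algebra $\Pol(\QG)$, namely a state satisfying $(h \otimes \id)\Delta = h(\cdot)\mathbf{1} = (\id \otimes h)\Delta$. This is a foundational result in the theory of CQG-algebras, established in algebraic form by Dijkhuizen--Koornwinder, and I would take it as a black box.

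Given a representation $v \in M_n(\Pol(\QG))$, I would build an invariant inner product on $\C^n$ by averaging against $h$. Concretely, set
\[
Q_{ij} \;=\; \sum_{k=1}^{n} h\bigl(v_{ki}^{*}\, v_{kj}\bigr) \;\in\; \C.
\]
For any $\xi \in \C^n$, writing $u_k = \sum_j v_{kj}\xi_j$, we have $\xi^{*} Q \xi = h\bigl(\sum_k u_k^{*}u_k\bigr) \geqslant 0$, so $Q$ is positive semi-definite. To check invertibility, note that invertibility of $v$ combined with $(\varepsilon \otimes \id)\Delta(v_{ij}) = v_{ij}$ forces $\varepsilon(v) \cdot v = v$, hence $\varepsilon(v_{ij}) = \delta_{ij}$ after multiplying by $v^{-1}$. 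If now $Q\xi = 0$, faithfulness of $h$ gives each $u_k = 0$, and applying $\varepsilon$ yields $\xi_k = 0$. The crucial intertwining property comes from applying the bi-invariance of $h$ to $\Delta(v_{mk}^{*}v_{ml}) = \sum_{p,q} v_{mp}^{*}v_{mq} \otimes v_{pk}^{*}v_{ql}$ and summing over $m$, which gives
\[
\sum_{p,q} v_{pk}^{*}\, Q_{pq}\, v_{ql} = Q_{kl}\,\mathbf{1},
\]
i.e., $v^{*}(Q \otimes \mathbf{1})v = Q \otimes \mathbf{1}$ inside $M_n(\Pol(\QG))$. Consequently $w := Q^{1/2}\,v\,Q^{-1/2}$ is a unitary representation equivalent to $v$.

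Once $v$ may be assumed unitary, complete reducibility is a short induction on dimension: any invariant subspace $W \subseteq \C^n$ has invariant orthogonal complement $W^{\perp}$, because unitarity of $v$ translates directly into the statement that $v$ preserves orthogonality, so $W^{\perp}$ remains a subcomodule. The main obstacle in executing this plan is really external: one needs the Haar state $h$ and its faithfulness and bi-invariance on the purely algebraic CQG-algebra. Once those ingredients are available, the verification that $Q$ is positive, invertible, and intertwines $v$ with itself is a brief algebraic calculation, and the reduction to irreducibles is then elementary.
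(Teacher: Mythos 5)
The paper does not prove this theorem at all: it is quoted as a foundational result of Woronowicz with a citation to \cite{woronowicz1995compact}, so there is no internal proof to compare against. Your strategy is the standard one from the literature (Neshveyev--Tuset, Timmermann), and the unitarization half is carried out correctly: granting existence and faithfulness of the Haar state $h$ on the CQG-algebra, your matrix $Q=(\id\otimes h)(v^{*}v)$ is positive, your invertibility argument via $\varepsilon(v)=I_{n}$ is right, the left-invariance computation yielding $v^{*}(Q\otimes\mathbf{1})v=Q\otimes\mathbf{1}$ is correct, and $w=Q^{1/2}vQ^{-1/2}$ satisfies $w^{*}w=1$; since $w$ is invertible this does give a unitary.

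The one genuine gap is in the last step. For a unitary $v\in M_{n}(\Pol(\QG))$ with an invariant subspace $W$, unitarity alone does \emph{not} imply that $W^{\perp}$ is invariant: in block form $v=\bigl(\begin{smallmatrix}a&b\\0&d\end{smallmatrix}\bigr)$, the relations $v^{*}v=vv^{*}=1$ only force $b=bb^{*}b$, and block upper-triangular unitaries with $b\neq 0$ exist over noncommutative $C^{*}$-algebras (e.g.\ $a=s$, $d=s^{*}$, $b=1-ss^{*}$ with $s$ the unilateral shift). What unitarity directly gives, by taking adjoints of $(1-p)\otimes\mathbf{1}\cdot v\cdot(p\otimes\mathbf{1})=0$, is invariance of $W^{\perp}$ under $v^{*}$, not under $v$. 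To close the argument you must use the corepresentation structure again: either apply $\id\otimes S$ to the invariance relation and use $S(v_{ij})=v_{ji}^{*}$ for unitary corepresentations, or average $v(p\otimes\mathbf{1})v^{*}$ against $h$ to produce a self-adjoint element of $\mathrm{Mor}_{\QG}(v,v)$ whose spectral projections cut out the complementary subrepresentation. With that repair the proof is complete and standard.
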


\subsection{Definition and basic properties}

Recall that if we are given two compact quantum groups $\QG$ and $\QH$, then we say that $\QH$ is a (closed quantum) subgroup of $\QG$ if there is a surjective Hopf *-algebra morphism
\begin{equation*}
q_{\QH} : \Pol(\QG) \to \Pol(\QH).
\end{equation*}
In that case, $\Pol(\QH)$ is naturally a Hopf-quotient of $\Pol(\QG)$; we will sometimes denote the corresponding Hopf *-ideal by $\I_{\QH}$. As we are dealing with CQG-algebras, each Hopf *-ideal $\I$ of $\Pol(\QG)$ in fact determines a compact quantum group $\QH$ which is a subgroup of $\QG$ such that $\I_{\QH} = \I$. Further note that as $q_{\QH}$ preserves in particular the respective counits, it is easily checked that $q_{\QH}(K_{n}(\Pol(\QG)) = K_{n}(\Pol(\QH))$ for all $n\in \N\cup \{\infty\}$. 

Given a family $(\QH_{i})_{i\in I}$ of quantum subgroups of $\QG$, we define its intersection $\bigwedge_{i\in I} \QH_{i}$ as the quantum subgroup corresponding to the Hopf *-ideal generated by all the $\I_{\QH_{i}}, i \in I$ (which is nothing but their algebraic sum). Conversely the subgroup generated by a given family $(\QH_{i})_{i \in I}$ of quantum subgroups of $\QG$ is defined as the quantum subgroup corresponding to the largest Hopf *-ideal contained in the intersection of the Hopf *-ideals $\I_{\QH_{i}}, i \in I$ (note that an intersection of Hopf ideals is not Hopf in general), and will  be denoted $\bigvee_{i \in I} \QH_i$. All this is discussed in detail for instance in \cite{CHK}.

Let us now move to generating functionals. 

\begin{definition}
Let $\QG$ be a compact quantum group with a quantum subgroup $\QH$. We say that a generating functional $\phi: \Pol(\QG)\to \C$ \emph{factors through} $\QH$ if there exists a functional $\phi_{\QH} : \Pol(\QH) \to \C$ such that $\phi_{\QH} = \phi\circ q_{\QH}$.
\end{definition}

Assume that a generating functional $\phi: \Pol(\QG)\to \C$ factors through $\QH$. Then $\phi_{\QH}$ as above is automatically a generating functional itself (this follows from the facts stated earlier) and moreover the whole Sch\"urmann triple of $\phi_{\QH}$ factors through $\QH$, by which we mean that that if $(\rho_{\QH}, \eta_{\QH}, \phi_{\QH})$ is a (surjective) Sch\"urmann triple for $\phi_{\QH}$, then $(\rho_{\QH}\circ q_{\QH}, \eta_{\QH}\circ q_{\QH}, \phi)$ is a (surjective) Sch\"urmann triple for $\phi$. Further observe that if $\phi$ as above factors through $\QH$ and is Gaussian (respectively, a drift) then $\phi_{\QH}$ is also Gaussian (respectively, a drift).

Note that if $\phi$ as above factors through $\QH$, then it also factors through any quantum subgroup of $\QG$ containing $\QH$. 

\begin{definition} \label{def:Zpart}
Let $\QG$ be a compact quantum group. We define the Gaussian part of $\QG$ to be intersection of all quantum subgroups of $\QG$ through which all Gaussian functionals factor. Note that the intersection is not empty since all functionals factor through $\QG$ itself. The drift part is defined similarly.
\end{definition}

The Gaussian (respectively, drift) part of $\QG$ is determined by the largest Hopf *-ideal of $\Pol(\QG)$ contained in the intersection of all $\textup{Ker}(\phi)$ with $\phi$ Gaussian (respectively, a drift). In other words, it is the smallest quantum subgroup $\QH$ of $\QG$ such that all Gaussian functionals (respectively, drifts) factor through $\QH$. The next result tells us about the behaviour of this construction when we move between a quantum group and its subgroup.

\begin{proposition}\label{prop:ZZ'}
Let $\QG$ be a compact quantum group and let $\QH$ be a quantum subgroup of $\QG$. Then, the Gaussian part of $\QG$ contains the Gaussian part of $\QH$. The same result holds for the drift parts. 
\end{proposition}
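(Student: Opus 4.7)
The plan is to translate the desired containment of subgroups into a containment of Hopf $^*$-ideals, then use the defining maximality property of $\I_{\Gauss(\QG)}$. Writing $q_{\QH}:\Pol(\QG)\to \Pol(\QH)$ for the quotient map, saying that $\Gauss(\QH)\subset \Gauss(\QG)$ as quantum subgroups of $\QG$ amounts (by the dictionary recalled before Definition \ref{def:Zpart}) to showing
\[
\I_{\Gauss(\QG)}\subset q_{\QH}^{-1}\bigl(\I_{\Gauss(\QH)}\bigr).
\]

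First I would show that Gaussian functionals pull back along $q_\QH$. Given any Gaussian generating functional $\psi:\Pol(\QH)\to \C$, the composition $\phi:=\psi\circ q_\QH$ is clearly normalised, hermitian and conditionally positive, because $q_\QH$ is a unital $^*$-homomorphism intertwining the counits. Moreover, using the identity $q_\QH(K_n(\Pol(\QG))) = K_n(\Pol(\QH))$ noted before Definition \ref{def:Zpart}, we have $\phi(K_3(\Pol(\QG))) = \psi(K_3(\Pol(\QH)))=0$, so $\phi$ is Gaussian on $\Pol(\QG)$. Consequently $\phi$ factors through $\Gauss(\QG)$, i.e.\ $\psi\circ q_\QH$ vanishes on $\I_{\Gauss(\QG)}$, which is equivalent to $\psi$ vanishing on the image $q_\QH(\I_{\Gauss(\QG)})\subset \Pol(\QH)$.

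Next, I would verify that $q_\QH(\I_{\Gauss(\QG)})$ is itself a Hopf $^*$-ideal of $\Pol(\QH)$: it is a $^*$-ideal as the image of one under a surjective $^*$-algebra map, and applying $q_\QH\otimes q_\QH$ to the inclusion $\Delta_\QG(\I_{\Gauss(\QG)})\subset \I_{\Gauss(\QG)}\otimes \Pol(\QG)+\Pol(\QG)\otimes \I_{\Gauss(\QG)}$ and using $\Delta_\QH\circ q_\QH=(q_\QH\otimes q_\QH)\circ \Delta_\QG$ shows it is a coideal; the compatibility with antipode and counit is analogous. Since this Hopf $^*$-ideal is contained in $\bigcap_{\psi}\ker(\psi)$ as $\psi$ ranges over all Gaussian functionals on $\Pol(\QH)$, the maximality characterisation of $\I_{\Gauss(\QH)}$ gives $q_\QH(\I_{\Gauss(\QG)})\subset \I_{\Gauss(\QH)}$. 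Taking the preimage then yields exactly the desired inclusion $\I_{\Gauss(\QG)}\subset q_\QH^{-1}(\I_{\Gauss(\QH)})$.

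Finally, the analogous statement for drifts requires only replacing $K_3$ by $K_2$ in the verification that $\phi=\psi\circ q_\QH$ inherits the defining vanishing property; the rest of the argument is identical. The whole proof is essentially a formal game with ideals and maximality, so I do not expect any genuine obstacle; the only point demanding a bit of care is the verification that the image of a Hopf $^*$-ideal under a surjective Hopf $^*$-algebra morphism is again a Hopf $^*$-ideal, but this is routine.
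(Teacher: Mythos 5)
Your proof is correct and follows essentially the same route as the paper: both reduce the statement to the ideal inclusion $\I_{\Gauss(\QG)}\subset q_{\QH}^{-1}(\I_{\Gauss(\QH)})$, obtained by pulling back Gaussian functionals along $q_{\QH}$ and then invoking maximality of the defining Hopf $^*$-ideals. Your explicit verification that $q_{\QH}(\I_{\Gauss(\QG)})$ is a Hopf $^*$-ideal is precisely what underlies the paper's (unproved) remark that preimages of largest Hopf $^*$-ideals are again largest Hopf $^*$-ideals in the preimage, so the two arguments coincide in substance.
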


\begin{proof}
Let us denote the respective Gaussian parts by $\QK$ and $\QK'$. The latter is viewed as a subgroup of $\QH$ -- hence corresponds to a Hopf *-ideal of $\Pol(\QH)$ that we denote by $\mathcal{I_{\QK'}}$. But we can also view it as a subgroup of $\QG$, and then the corresponding Hopf *-ideal is $q_{\QH}^{-1}(\mathcal{I}_{\QK'})$, as can easily be checked. Thus our claim is equivalent to
the inclusion
\begin{equation*}
\mathcal{I}_{\QK} \subset  q_\QH^{-1}(\mathcal{I}_{\QK'}).
\end{equation*}
But we have already mentioned that 
\begin{equation*}
\bigcap_{\phi \text{ Gaussian}} \textup{Ker} (\phi) \subset q_{\QH}^{-1}\left(\bigcap_{\phi' \text{ Gaussian}} \textup{Ker} (\phi')\right).
\end{equation*}
This ends the proof of the first part, if we note that as $q_{\QH}$ is a surjective Hopf *-morphism, for any set $X\subset \Pol(\QH)$ if $\mathcal{J}$ is the largest Hopf *-ideal contained in $X$, then $q_{\QH}^{-1}(\mathcal{J})$ is the largest Hopf *-ideal contained in $q_{\QH}^{-1}(X)$.

The proof for the drift part is similar.
\end{proof}

\begin{definition}
For a quantum group $\mathbb{G}$, we denote its Gaussian part by $\Gauss(\mathbb{G})$ and for brevity we call $\QG$ Gaussian if $\QG = \Gauss(\mathbb{G})$. 
\end{definition}

\begin{corollary}\label{cor:Gausssubgroups}
If $\QG$ is a compact quantum group with a quantum subgroup $\QH$, then $\Gauss(\mathbb{H})\subset \Gauss(\mathbb{G})$. On the other hand if  $\Gauss(\mathbb{G})$ is contained in $\QH$, then $\Gauss(\mathbb{H})= \Gauss(\mathbb{G})$. Further if $\QG$ is generated by its Gaussian quantum subgroups, it is Gaussian itself.
\end{corollary}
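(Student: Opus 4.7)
The plan is to treat the three assertions in order, exploiting the characterization of $\Gauss(\QG)$ as the smallest quantum subgroup of $\QG$ through which every Gaussian functional on $\QG$ factors. The first assertion is just Proposition \ref{prop:ZZ'} applied to $\QH \subset \QG$, so nothing further is needed.

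For the second assertion, assume $\Gauss(\QG)\subset \QH \subset \QG$. One inclusion, namely $\Gauss(\QH)\subset \Gauss(\QG)$, follows directly from Proposition \ref{prop:ZZ'}. For the reverse inclusion, I would take an arbitrary Gaussian functional $\phi$ on $\QG$ and show that it factors through $\Gauss(\QH)$ viewed as a subgroup of $\QG$. The hypothesis $\Gauss(\QG)\subset \QH$ means that $q_{\Gauss(\QG)}$ factorises as $\pi \circ q_{\QH}$ for some surjective Hopf $^*$-morphism $\pi$. Since $\phi$ factors through $\Gauss(\QG)$, this produces a linear functional $\phi_{\QH}$ on $\Pol(\QH)$ with $\phi = \phi_{\QH} \circ q_{\QH}$. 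A short check, based on the surjectivity of $q_{\QH}$ together with the identity $q_{\QH}(K_{3}(\Pol(\QG))) = K_{3}(\Pol(\QH))$ recorded earlier, shows that $\phi_{\QH}$ is itself a Gaussian generating functional on $\QH$. Therefore $\phi_{\QH}$ factors through $\Gauss(\QH)$, and composing with $q_{\QH}$ yields the desired factorisation of $\phi$ through $\Gauss(\QH)$. Minimality of $\Gauss(\QG)$ then forces $\Gauss(\QG)\subset \Gauss(\QH)$, completing the equality.

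For the third assertion, take a family $(\QH_{i})_{i\in I}$ of Gaussian quantum subgroups of $\QG$ whose join is $\QG$. Proposition \ref{prop:ZZ'} gives $\QH_{i} = \Gauss(\QH_{i}) \subset \Gauss(\QG)$ for every $i \in I$. It remains to observe that $\bigvee_{i\in I}\QH_{i}$ is, by construction, contained in any quantum subgroup of $\QG$ containing every $\QH_{i}$: indeed, if each $\I_{\QH_{i}}$ contains $\I_{\Gauss(\QG)}$, then so does $\bigcap_{i}\I_{\QH_{i}}$, and therefore $\I_{\Gauss(\QG)}$ (being Hopf) is contained in the largest Hopf $^*$-ideal sitting inside $\bigcap_{i}\I_{\QH_{i}}$, which is precisely $\I_{\bigvee_{i}\QH_{i}}$. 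Consequently $\QG = \bigvee_{i}\QH_{i} \subset \Gauss(\QG)$, forcing $\QG = \Gauss(\QG)$.

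I do not anticipate any serious obstacle. The only point requiring a little care is the verification that the induced functional $\phi_{\QH}$ in the second step inherits the Gaussian property from $\phi$, and this is immediate from the fact that surjective Hopf $^*$-morphisms map $K_{n}$ onto $K_{n}$ for every $n$; everything else is a matter of chasing the universal property of $\Gauss(\,\cdot\,)$ through the various quotient maps.
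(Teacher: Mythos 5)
Your proposal is correct and follows essentially the same route as the paper, which simply records that the corollary ``follows immediately from Proposition \ref{prop:ZZ'} and respective definitions''; you have merely written out the details (the descent of Gaussianity along $q_{\QH}$ via $q_{\QH}(K_{3}(\Pol(\QG)))=K_{3}(\Pol(\QH))$, and the minimality/join arguments) that the authors leave implicit. No gaps.
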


\begin{proof}
Follows immediately from Proposition \ref{prop:ZZ'} and respective definitions.
\end{proof}

\subsection{Kac property of the Gaussian part of a compact quantum group}

A compact quantum group is said to be \emph{of Kac type} if (among many other equivalent characterizations, see for instance \cite[Prop 1.7.9]{nt13}), its antipode is involutive, i.e.\ $S^{2} = \id$. It turns out that this condition is closely linked to gaussianity. To express this, we will use the notion of maximal Kac type quantum subgroup introduced in \cite{soltan2005quantum} but slightly revisited. Recall (see for instance \cite[Sec 1.4]{nt13}) that to any irreducible unitary representation $U=(u_{ij})_{i,j=1}^n$ of $\QG$ one can associate a unique positive invertible operator $Q$ such that $\mathrm{Tr}(Q) = \mathrm{Tr}(Q^{-1})$ and $Q\overline{U}Q^{-1}$ is unitary. It is easy to see that conjugating $U$ by a unitary scalar matrix again yields a unitary representation, hence we may, and will, assume that $Q$ is diagonal. Let $(q_{i})_{1\leqslant i\leqslant n}$ be its diagonal coefficients, i.e., its eigenvalues. Then the formula
\begin{equation*}
\tau_{t}(u_{ij}) = \left(\frac{q_{i}}{q_{j}}\right)^{-it}u_{ij}, \;\;\ t \in \mathbb{R}, i,j=1,\ldots,n,
\end{equation*}
defines consistently a one-parameter group of *-homomorphisms of $\Pol(\QG)$, called the \emph{scaling group} of $\QG$ (see for instance \cite[Prop 1.7.6]{nt13} for a proof).

This can be used to give a new description of the maximal Kac type quantum subgroup of a compact quantum group $\QG$.

\begin{lemma}\label{lem:maximalkac}
Let $\QG$ be a compact quantum group and let $I_{\textup{Kac}}$ be the ideal generated by the range of $S^{2} - \id$. Then, $I_{\text{Kac}}$ is a Hopf *-ideal and the compact quantum group $\QG_{\textup{Kac}}$ defined by $\Pol(\QH) = \Pol(\QG)/I_{\textup{Kac}}$ is the maximal Kac type quantum subgroup of $\QG$.
Moreover we also have that $I_{\text{Kac}}$ is the ideal generated by the  union of the ranges of all $\tau_s - \id$, $s \in \mathbb{R}$.
\end{lemma}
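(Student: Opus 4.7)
The strategy is to handle the three assertions simultaneously by describing both candidate generating sets explicitly on matrix coefficients of irreducibles, and then reading off the Hopf $*$-ideal and maximality properties from the scaling-group viewpoint.

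Step one: fix an irreducible unitary representation $U=(u_{ij})$ of $\QG$ with $Q=\mathrm{diag}(q_1,\ldots,q_n)$. A standard computation in compact quantum group theory gives $S^{2}(u_{ij})=(q_j/q_i)u_{ij}$, which is the value at $t=-i$ of the analytic continuation of the scaling group $\tau_t$ displayed before the lemma. Consequently
\begin{equation*}
(S^{2}-\mathrm{id})(u_{ij})=(q_{j}/q_{i}-1)u_{ij},\qquad (\tau_{s}-\mathrm{id})(u_{ij})=\bigl((q_{i}/q_{j})^{-is}-1\bigr)u_{ij}.
\end{equation*}
The first scalar vanishes iff $q_{i}=q_{j}$, and the second vanishes identically in $s\in\R$ under the same condition; if $q_{i}\neq q_{j}$ then since the positive real number $q_{i}/q_{j}$ differs from $1$, the function $s\mapsto(q_{i}/q_{j})^{-is}$ is nonconstant, so one can choose $s\in\R$ making the scalar nonzero. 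Hence both the range of $S^{2}-\mathrm{id}$ and the union $\bigcup_{s\in\R}\mathrm{range}(\tau_{s}-\mathrm{id})$ linearly span the same subspace $V\subset\Pol(\QG)$, namely the span of all matrix coefficients $u_{ij}$ with $q_{i}\neq q_{j}$ as $U$ ranges over irreducible unitary representations. The two-sided ideals they generate therefore coincide, which settles the last assertion of the lemma.

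Step two: verify that this common ideal $I_{\textup{Kac}}$ is a Hopf $*$-ideal. The cleanest route uses the scaling group; the excerpt already states that each $\tau_{s}$ is a $*$-homomorphism, and the further identities $\varepsilon\circ\tau_{s}=\varepsilon$, $(\tau_{s}\otimes\tau_{s})\circ\Delta=\Delta\circ\tau_{s}$ and $S\circ\tau_{s}=\tau_{s}\circ S$ follow immediately from the explicit scalar action of $\tau_{s}$ on each $u_{ij}$ (using that $q_i/q_j>0$, so $\overline{(q_i/q_j)^{-is}}=(q_j/q_i)^{-is}$). These properties give, for every $a\in\Pol(\QG)$: $\tau_{s}(a)-a\in\ker\varepsilon$; $(\tau_{s}(a)-a)^{*}=\tau_{s}(a^{*})-a^{*}$; $S(\tau_{s}(a)-a)=\tau_{s}(S(a))-S(a)$; and, via the standard splitting $xy-x'y'=(x-x')y+x'(y-y')$, $\Delta(\tau_{s}(a)-a)\in V\otimes\Pol(\QG)+\Pol(\QG)\otimes V$. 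Extending these closure properties from the generating set to the whole ideal is routine.

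Step three: maximality. Since $I_{\textup{Kac}}$ contains $S^{2}(a)-a$ for every $a\in\Pol(\QG)$, the quotient $\Pol(\QG_{\textup{Kac}})=\Pol(\QG)/I_{\textup{Kac}}$ satisfies $S^{2}=\mathrm{id}$ and so $\QG_{\textup{Kac}}$ is of Kac type. Conversely, any surjective Hopf $*$-morphism $q_{\QH}:\Pol(\QG)\to\Pol(\QH)$ with $\QH$ of Kac type satisfies $q_{\QH}\circ S^{2}=S_{\QH}^{2}\circ q_{\QH}=q_{\QH}$, so it annihilates the range of $S^{2}-\mathrm{id}$ and hence all of $I_{\textup{Kac}}$, producing the required factorisation through $\QG_{\textup{Kac}}$. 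The only mildly subtle point in the whole argument is the identification of the two generating subspaces in step one; everything else is bookkeeping around the functorial properties of $\tau_{s}$.
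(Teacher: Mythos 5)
Your proof is correct, and its two main pillars --- the coideal computation via $\Delta\circ(S^{2}-\id)$ (equivalently $\Delta\circ(\tau_{s}-\id)$) together with the telescoping split $\sum(\tau_s(a_{(1)})-a_{(1)})\otimes\tau_s(a_{(2)})+\sum a_{(1)}\otimes(\tau_s(a_{(2)})-a_{(2)})$, and the maximality argument that a quotient $\pi$ is Kac iff $\pi\circ(S^{2}-\id)=0$ iff $\ker\pi\supset\mathrm{Im}(S^{2}-\id)$ --- are exactly the paper's. Where you genuinely diverge is the final assertion: the paper never compares the two generating sets directly, but instead reruns the same universal-property argument with $\tau_s$ in place of $S^2$ and concludes that both ideals equal the smallest Hopf $*$-ideal whose quotient is Kac; you instead diagonalise both $S^{2}-\id$ and $\tau_{s}-\id$ on the Peter--Weyl basis and observe that their ranges span the same subspace, namely the span of the $u_{ij}$ with $q_i\neq q_j$. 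Your route is more computational but buys something concrete: it proves the paper's subsequent Corollary \ref{cor:generatorsKacpart} (the explicit generating set for $I_{\textup{Kac}}$) as a byproduct, and it identifies the two ideals at the level of plain two-sided ideals, before any Hopf structure is invoked. One cosmetic remark: you write $S^{2}(u_{ij})=(q_j/q_i)u_{ij}$ while the paper's Corollary \ref{cor:generatorsKacpart} uses $(q_i/q_j)$; this is a convention mismatch internal to the paper (the stated formula for $\tau_t$ combined with $S^2=\tau_{-i}$ gives your version), and it is immaterial since all that matters is that the eigenvalue differs from $1$ exactly when $q_i\neq q_j$.
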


\begin{proof}
Note that the image of $S^{2} - \id$ is a selfadjoint subspace of $\Pol(\QG)$, as follows from the formula $S^{2}\circ\ast = \ast\circ S^{-2}$. Moreover if $x\in \Pol(\QG)$, then using Sweedler's notation we have
\begin{align*}
\Delta(S^{2}(x) - x) & = (S^{2}\otimes S^{2} - \id)\circ\Delta(x) \\
& = \sum S^{2}(x_{(1)})\otimes S^{2}(x_{(2)}) - \sum x_{(1)}\otimes x_{(2)} \\
& = \sum \left(S^{2}(x_{1}) - x_{(1)}\right)\otimes S^{2}(x_{(2)}) + \sum x_{(1)}\otimes \left(S^{2}(x_{(2)}) - x_{(2)}\right) 
\end{align*}
In other words, $\mathrm{Im}(S^{2} - \id)$ is a *-coideal.

Let now $\QK$ be a quantum subgroup of $\QG$ given by $\pi : \Pol(\QG)\to \Pol(\QK)$. Then $\QK$ is of Kac type if and only if $(S^{2} - \id)\circ\pi = 0$. Because, $\pi$ is a Hopf algebra homomorphism, this is equivalent to $\pi\circ(S^{2} - \id) = 0$. In other words, $\QK$ is of Kac type if and only if the range of $S^{2} - \id$ is contained in $\ker(\pi)$. As a consequence, the intersection of all Hopf *-ideals containing the range of $S^{2} - \id$ gives rise to the maximal Kac type quantum subgroup of $\QG$, and that intersection is exactly the ideal $I_{\textup{Kac}}$.

As for the last two statements, we can simply repeat the proof above,  using  the following facts:  for any $t \in \mathbb{R}$ we have that $\tau_t$ is $^*$-preserving and $\Delta \circ \tau_t = (\tau_t \otimes \tau_t)\circ \Delta$, and $\QK$ is of Kac type if and only if $(\tau_s-\id)\circ \pi=0$ for all $s\in \mathbb{R}$.
\end{proof}

We will need hereafter a variant of that characterization, involving an explicit description of the map $S^{2}$. More precisely, the family $(\tau_{t})_{t\in \R}$ has a unique holomorphic extension $(\tau_{z})_{z\in \C}$ which is given exactly by the same formula, and we have the equality $S^{2} = \tau_{-i}$.

\begin{corollary}\label{cor:generatorsKacpart}
Let $\Irr(\QG)$ be a complete set of pairwise inequivalent irreducible unitary represen\-tations of $\QG$ and fix for each $U\in \Irr(\QG)$ an eigenbasis of the corresponding matrix $Q$. Then, with the notations above, the ideal $I_{\textup{Kac}}$ is generated by the set
\begin{align*}
\left\{ u_{ij} \mid U=(u_{ij})_{i,j=1}^n\in \Irr(\QG), i,j=1,\ldots, n, q_{i}\neq q_{j}\right\}.
\end{align*}
\end{corollary}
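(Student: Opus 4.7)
My plan is to deduce the corollary directly from the second characterisation established in Lemma \ref{lem:maximalkac}, namely that $I_{\textup{Kac}}$ is the ideal generated by $\bigcup_{s\in \mathbb{R}} \operatorname{Im}(\tau_{s} - \id)$. The key is to compute this image explicitly using the Peter--Weyl decomposition of $\Pol(\QG)$ together with the diagonal action of $\tau_{s}$ on matrix coefficients expressed in an eigenbasis of $Q$.

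The first step is to recall that by Peter--Weyl, the family $\{u_{ij} : U = (u_{ij})_{i,j=1}^{n} \in \Irr(\QG), \; 1 \leq i,j \leq n\}$ (using the fixed eigenbases of the $Q$-matrices) forms a linear basis of $\Pol(\QG)$. In this basis the formula $\tau_{s}(u_{ij}) = (q_{i}/q_{j})^{-is}u_{ij}$ shows that $\tau_{s}$ acts diagonally, hence
\begin{equation*}
(\tau_{s} - \id)(u_{ij}) = \bigl((q_{i}/q_{j})^{-is} - 1\bigr)\, u_{ij}.
\end{equation*}
Extending by linearity, for every $s \in \mathbb{R}$ the image $\operatorname{Im}(\tau_{s}-\id)$ is the linear span of those $u_{ij}$ for which the scalar $(q_{i}/q_{j})^{-is} - 1$ is nonzero. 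This immediately forces the inclusion $\operatorname{Im}(\tau_{s} - \id) \subseteq \operatorname{span}\{u_{ij} : q_{i} \neq q_{j}\}$, since whenever $q_{i} = q_{j}$ the factor vanishes.

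For the reverse inclusion, given any pair $(U, i, j)$ with $q_{i} \neq q_{j}$, I note that $q_{i}/q_{j}$ is a positive real number different from $1$, so $\log(q_{i}/q_{j}) \neq 0$ and therefore the function $s \mapsto (q_{i}/q_{j})^{-is} - 1 = e^{-is \log(q_{i}/q_{j})} - 1$ is nonzero for all sufficiently small nonzero $s$. Picking such an $s$ exhibits $u_{ij}$ as a nonzero scalar multiple of an element of $\operatorname{Im}(\tau_{s} - \id)$, hence $u_{ij}$ lies in $\bigcup_{s\in\mathbb{R}}\operatorname{Im}(\tau_{s}-\id)$. Combining the two inclusions yields
\begin{equation*}
\bigcup_{s\in\mathbb{R}}\operatorname{Im}(\tau_{s}-\id) \subseteq \operatorname{span}\{u_{ij} : q_{i} \neq q_{j}\} \subseteq \operatorname{span}\Bigl(\bigcup_{s\in\mathbb{R}}\operatorname{Im}(\tau_{s}-\id)\Bigr),
\end{equation*}
so the two sets generate the same (two-sided $^*$-)ideal. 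Invoking Lemma \ref{lem:maximalkac} then gives the claim.

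I expect no real obstacle here; the only minor subtlety is the choice of basis in which $\tau_{s}$ acts diagonally (one must insist on diagonalising $Q$, as arranged at the start of the subsection) and the observation that taking a single small $s$ suffices to recover each $u_{ij}$ with $q_{i} \neq q_{j}$ (so there is no need to invoke the whole one-parameter family at once for individual generators, only for the union of images).
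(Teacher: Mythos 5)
Your argument is correct and essentially the same as the paper's: both deduce the corollary from Lemma \ref{lem:maximalkac} together with the diagonal action on the Peter--Weyl basis of matrix coefficients. The only (cosmetic) difference is that you use the characterisation of $I_{\textup{Kac}}$ via the ranges of $\tau_{s}-\id$, which requires the small extra step of choosing $s$ with $(q_{i}/q_{j})^{-is}\neq 1$, whereas the paper uses the range of $S^{2}-\id$, whose eigenvalues $q_{i}/q_{j}$ are real and positive so that $q_{i}\neq q_{j}$ immediately gives a nonzero eigenvalue of $S^2-\id$.
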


\begin{proof}
We know by Lemma \ref{lem:maximalkac} that $I_{\textup{Kac}}$ is generated by the range of  $S^{2} - \id$. Because the coefficients of the elements of $\Irr(\QG)$ form a basis of $\Pol(\QG)$, the range of $S^{2} - \id$ is the span of the images of all these coefficients. If $q_{i} = q_{j}$, then $S^{2}(u_{ij}) = u_{ij}$ while if $q_{i}\neq q_{j}$, then
\begin{equation*}
u_{ij} = \left(\frac{q_{i}}{q_{j}} - 1\right)^{-1}(S^{2}(u_{ij}) - u_{ij})\in \mathrm{Im}(S^{2} - \id),
\end{equation*}
hence the result.
\end{proof}

With this we can establish the link between Gaussian processes and the maximal Kac type quantum subgroup.

\begin{theorem}\label{prop-gauss-kac}
Let $\QG$ be a compact matrix quantum group. Any Gaussian process on $\QG$ factors through the maximal Kac type quantum subgroup of $\QG$.
\end{theorem}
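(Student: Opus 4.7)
The plan is to prove the theorem by showing that every Gaussian generating functional $\phi$ on $\QG$ vanishes on the ideal $I_{\textup{Kac}}$ introduced in Lemma~\ref{lem:maximalkac}; by Corollary~\ref{cor:generatorsKacpart} this ideal is generated by the matrix coefficients $u_{ij}$ coming from the $Q$-diagonalised basis of any irreducible unitary representation with $q_i\neq q_j$. Combining this with Corollary~\ref{cor:wickformula} reduces the task to checking $\phi$ on these generators and on their products with arbitrary matrix coefficients.

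First I would handle the cocycle. Since $\phi$ is Gaussian, $\eta$ is a derivation with respect to $\varepsilon$ (Proposition~\ref{prop-gauss}). Applying $\eta$ to the antipode identity $\sum_{(x)}S(x_{(1)})x_{(2)}=\varepsilon(x)\mathbf{1}$ and using $\varepsilon\circ S=\varepsilon$ gives $\eta(x)+\eta(S(x))=0$ for every $x\in\Pol(\QG)$, hence $\eta\circ S=-\eta$ and $\eta\circ S^{2}=\eta$. Since $S^{2}$ scales each $u_{ij}$ by $q_i/q_j$ in the $Q$-diagonalised basis (as used in the proof of Corollary~\ref{cor:generatorsKacpart}), this forces $\eta(u_{ij})=0$ whenever $q_i\neq q_j$, and the same identity applied to $u_{ij}^{*}=S(u_{ji})$ gives $\eta(u_{ij}^{*})=0$ under the same condition.

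The main step is then to show $\phi(u_{ij})=0$ when $q_i\neq q_j$; here I plan to twist by the scaling group. For any real $t$ the map $\tau_t$ is a $^{*}$-bialgebra automorphism of $\Pol(\QG)$ preserving $\varepsilon$ (and hence every $K_n$), so $\phi\circ\tau_t$ is again a Gaussian generating functional, and a routine check shows that its Sch\"urmann triple may be taken to be $(\rho,\eta\circ\tau_t,\phi\circ\tau_t)$. The first step yields $\eta\circ\tau_t=\eta$ pointwise: on coefficients with $q_i\neq q_j$ both sides vanish, while on coefficients with $q_i=q_j$ the automorphism $\tau_t$ acts as the identity. Subtracting the cocycle identity \eqref{eq-cobound} for $\phi$ and for $\phi\circ\tau_t$ then shows that the hermitian functional
\[
\delta_t:=\phi-\phi\circ\tau_t
\]
satisfies $\delta_t(ab)=\varepsilon(a)\delta_t(b)+\delta_t(a)\varepsilon(b)$ for all $a,b\in\Pol(\QG)$; it is thus an $\varepsilon$-derivation and in particular a trace. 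By the argument of Remark~\ref{remclass} it therefore factors through the commutator ideal of $\Pol(\QG)$, i.e.\ through the maximal classical subgroup of $\QG$, which is of Kac type and consequently contained in $\QG_{\textup{Kac}}$; hence $\delta_t$ vanishes on $I_{\textup{Kac}}$. Evaluating on $u_{ij}$ with $q_i\neq q_j$ gives
\[
0=\delta_t(u_{ij})=\bigl(1-(q_i/q_j)^{-it}\bigr)\phi(u_{ij}),
\]
and taking any real $t$ for which $(q_i/q_j)^{-it}\neq 1$ (which exists whenever $q_i\neq q_j$) produces $\phi(u_{ij})=0$.

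Finally, I would reinject $\phi(u_{ij})=\eta(u_{ij})=\eta(u_{ij}^{*})=0$ into the Gaussian product formula
\[
\phi(ab)=\varepsilon(a)\phi(b)+\phi(a)\varepsilon(b)+\langle\eta(a^{*}),\eta(b)\rangle
\]
to obtain $\phi(xu_{ij})=\phi(u_{ij}x)=0$ for every matrix coefficient $x$ of $\QG$. Corollary~\ref{cor:wickformula}, applied with $X$ the spanning family of matrix coefficients and $Y=\{u_{ij}:q_i\neq q_j\}\subseteq\ker\varepsilon$, then propagates the vanishing to the whole ideal generated by $Y$, which is exactly $I_{\textup{Kac}}$ by Corollary~\ref{cor:generatorsKacpart}; so $\phi$ descends to $\Pol(\QG_{\textup{Kac}})$. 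The main obstacle I anticipate is ensuring that $\eta\circ\tau_t$ literally equals $\eta$ as a map, and not just represents an equivalent cocycle after an isometry of pre-Hilbert spaces: this is exactly what makes $\delta_t$ an honest derivation, and it is available only because the first step has already killed $\eta$ on precisely the coefficients that are not $\tau_t$-fixed.
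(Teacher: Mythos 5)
Your proof is correct, and it shares its skeleton with the paper's argument: both reduce the claim, via Corollary \ref{cor:generatorsKacpart} and the Wick-type Corollary \ref{cor:wickformula}, to showing that $\phi$, $\eta$ and $\eta\circ\ast$ vanish on the coefficients $u_{ij}$ with $q_i\neq q_j$, and both first kill the cocycle by deriving $\eta\circ S=-\eta$ (hence $\eta\circ S^{2}=\eta$) from the Gaussian derivation property. Where you genuinely diverge is the step that kills $\phi(u_{ij})$. The paper applies $\phi$ to the antipode identity $a_{(1)}S(a_{(2)})=\varepsilon(a)\mathbf{1}$ for $a$ in the $*$-coideal $\mathrm{Im}(S^{2}-\id)$; the already-established vanishing of $\eta$ there makes the inner-product term drop out, giving $\phi\circ S=-\phi$ on that coideal, hence $\phi(u_{ij})=\phi(S^{2}(u_{ij}))=(q_i/q_j)\phi(u_{ij})$. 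You instead twist by the real scaling group: since $\eta\circ\tau_t=\eta$ literally (both are linear and agree on the Peter--Weyl basis, which is where killing $\eta$ on the non-$\tau_t$-fixed coefficients is essential, as you note), the difference $\phi-\phi\circ\tau_t$ is a hermitian $\varepsilon$-derivation, i.e.\ a drift, which vanishes on the commutator ideal and therefore on $I_{\textup{Kac}}$ (the classical version being of Kac type, Lemma \ref{lem:maximalkac} gives $I_{\textup{Kac}}\subseteq$ commutator ideal); evaluating at $u_{ij}$ yields $\bigl(1-(q_i/q_j)^{-it}\bigr)\phi(u_{ij})=0$. The two mechanisms produce the same eigenvalue equation, the paper's in effect at the single complex point $t=-i$ where $\tau_{-i}=S^{2}$, yours at generic real $t$. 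Your route costs a little extra bookkeeping (verifying that $\phi\circ\tau_t$ is Gaussian with triple $(\rho,\eta\circ\tau_t,\phi\circ\tau_t)$, and the inclusion of ideals just mentioned) but isolates a pleasant structural fact -- the scaling-group defect of a Gaussian generating functional is always a drift -- while the paper's antipode computation is shorter and needs no detour through the classical subgroup. Your final propagation to products $\phi(xu_{ij})$ and $\phi(u_{ij}x)$ is exactly what the paper's terse appeal to Corollary \ref{cor:wickformula} leaves implicit, so spelling it out is a small improvement in completeness.
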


\begin{proof}
By Lemma \ref{lem:maximalkac}, we have to prove that $\phi$ vanishes on $I_{\text{Kac}}$. It is in fact enough to prove that it vanishes on $X = \mathrm{Im}(S^{2} - \id)$, i.e. that $\phi$ is $S^{2}$-invariant, thanks to Corollary \ref{cor:wickformula}. 

We first claim that for any Gaussian cocycle $\eta$ we have $\eta\circ(S+\id) = 0$. Indeed, for any $x\in \Pol(\QG)$,
\begin{align*}
0 & = \eta(\varepsilon(x)1) \\
& = \eta(x_{(1)}S(x_{(2)})) \\
& = \eta(x_{(1)})\varepsilon(S(x_{(2)})) + \varepsilon(x_{(1)})\eta(S(x_{(2)})) \\
& = \eta(x_{(1)})\varepsilon(x_{(2)}) + \varepsilon(S(x_{(1)}))\eta(S(x_{(2)})) \\
& = \eta(x) + \eta(S(x))
\end{align*}
where the last step uses the fact that $X$ is a *-coideal. This implies $\eta\circ S^{2}(x) = \eta(x)$ for all $x\in \Pol(\QG)$, hence $\eta$ vanishes on $X$. As a consequence, for any $a\in X$,
\begin{align*}
0 & = \phi(\varepsilon(a)1) = \phi(a_{(1)}S(a_{(2)})) \\
& = \phi(\varepsilon(a_{(1)})S(a_{(2)}))) + \phi(a_{(1)}\varepsilon(S(a_{(2)}))) + \langle\eta(a_{(1)}^{*}), \eta(S(a_{(2)}))\rangle \\
& = \phi(S(a)) + \phi(a) - \langle\eta(a_{(1)}^{*}), \eta(a_{(2)})\rangle \\
& = \phi(S(a)) + \phi(a).
\end{align*}
This implies that for any $a\in X$, $\phi(S^{2}(a)) = a$. Let us now consider an irreducible unitary representation $U=(u_{ij})_{i,j=1}^n$ and consider $i,j=1,\ldots,n$ such that $q_{i}\neq q_{j}$. Then $u_{ij}\in X$ by Corollary \ref{cor:generatorsKacpart}, so that
\begin{equation*}
\phi(u_{ij}) = \phi(S^{2}(u_{ij})) = \frac{q_{i}}{q_{j}}\phi(u_{ij}).
\end{equation*}
This implies $\phi(u_{ij}) = 0$, hence the result.
\end{proof}

\section{Strong connectedness} \label{sec:stronglyconnected}

Before turning to examples, it will be useful to have at hand a condition which is necessary for a compact quantum group to be Gaussian. To introduce it, recall that by definition the intersection of the kernels of all Gaussian functionals contains $K_{3}$. Thus, any Hopf *-ideal contained in $K_{3}$ is contained in $\I_{\Gauss}$. In particular, since $K_{\infty}$ is such an ideal (see below), it must be trivial as soon as $\QG$ is Gaussian. Our purpose in this section is to explore the condition $K_{\infty} = \{0\}$.

\subsection{The definition}

Before going further, let us give a proof of the coideal property for $K_{\infty}$ which was alluded to in the beginning. It does in fact follow from a more general result.

\begin{proposition}\label{prop:hopfideal}
Let $A$ be a Hopf algebra and let $I$ be a Hopf ideal. Then,
\begin{equation*}
I^{\infty} = \bigcap_{k\geqslant 1}I^{k}
\end{equation*}
is a Hopf ideal.
\end{proposition}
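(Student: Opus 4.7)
The plan is to check that $I^\infty$ satisfies the three defining properties of a Hopf ideal, namely being a two-sided ideal, a coideal, and stable under the antipode. The first and third are essentially formal: $I^\infty$ is the intersection of two-sided ideals and hence an ideal, $\varepsilon$ vanishes on $I$ and hence on $I^\infty$, and since $S$ is an antihomomorphism with $S(I) \subseteq I$ one has $S(I^k) \subseteq I^k$ for every $k$, whence $S(I^\infty) \subseteq I^\infty$.

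The serious content is the coideal property $\Delta(I^\infty) \subseteq I^\infty \otimes A + A \otimes I^\infty$. First I would carry out the filtration estimate already hinted at in the preceding text: since $\Delta$ is an algebra homomorphism with $\Delta(I) \subseteq I \otimes A + A \otimes I$, the inclusion $\Delta(I^k) \subseteq (I \otimes A + A \otimes I)^k$ is immediate, and expanding this $k$-th power in $A \otimes A$ using componentwise multiplication together with the two-sided ideal property of $I$ shows
\begin{equation*}
\Delta(I^k) \subseteq \sum_{j=0}^{k} I^j \otimes I^{k-j}.
\end{equation*}
For $k = 2m$ this forces $\Delta(I^{2m}) \subseteq I^m \otimes A + A \otimes I^m$, hence
\begin{equation*}
\Delta(I^\infty) \subseteq \bigcap_{m \geq 1}\bigl(I^m \otimes A + A \otimes I^m\bigr).
\end{equation*}

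The main obstacle is then to prove the identity
\begin{equation*}
\bigcap_{m \geq 1}\bigl(I^m \otimes A + A \otimes I^m\bigr) = I^\infty \otimes A + A \otimes I^\infty,
\end{equation*}
whose $\supseteq$ direction is trivial. For the converse I would pick $\xi$ in the left-hand side, write $\xi = \sum_{i=1}^n a_i \otimes b_i$ with the $b_i$ linearly independent, and then rechoose the basis of $U := \mathrm{span}\{b_i\}$ so that $b_{r+1}, \ldots, b_n$ form a basis of $U \cap I^\infty$ while $b_1, \ldots, b_r$ span a complement. The terms with $i > r$ already lie in $A \otimes I^\infty$, so only the residual sum $\sum_{i \leq r} a_i \otimes b_i$ needs attention. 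Since $\mathrm{span}\{b_1, \ldots, b_r\}$ is finite-dimensional and the decreasing chain $\mathrm{span}\{b_1, \ldots, b_r\} \cap I^p$ stabilises at $\mathrm{span}\{b_1, \ldots, b_r\} \cap I^\infty = 0$, there exists $p^\ast$ such that $\{b_1, \ldots, b_r\}$ stays linearly independent modulo $I^p$ for all $p \geq p^\ast$. The hypothesis that $\xi$ vanishes in $(A/I^p) \otimes (A/I^p)$ then forces $a_i \in I^p$ for every $i \leq r$, and letting $p$ range over all $p \geq p^\ast$ gives $a_i \in I^\infty$. Thus the residual sum lies in $I^\infty \otimes A$, which completes the argument.
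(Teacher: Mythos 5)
Your proof is correct, and it diverges from the paper's at the one step that carries real content, namely the coideal property. The paper dualises: from $\Delta(I^{2k})\subseteq I^{k}\otimes A+A\otimes I^{k}$ it deduces $(I^{k})^{\perp}\cdot(I^{k})^{\perp}\subseteq(I^{2k})^{\perp}$ in the convolution algebra $A^{*}$, so that $\bigcup_{k}(I^{k})^{\perp}$ is a subalgebra of $A^{*}$; it then invokes a theorem of Abe asserting that the orthogonal of a subalgebra of $A^{*}$ is a coideal of $A$, and finally identifies that orthogonal with $I^{\infty}$ by a short linear-algebra argument. You instead stay entirely on the primal side and prove directly the identity $\bigcap_{m}\bigl(I^{m}\otimes A+A\otimes I^{m}\bigr)=I^{\infty}\otimes A+A\otimes I^{\infty}$, using that $I^{m}\otimes A+A\otimes I^{m}=\ker(\pi_{m}\otimes\pi_{m})$ and that the decreasing chain $\mathrm{span}\{b_{1},\ldots,b_{r}\}\cap I^{p}$ stabilises at $0$ in a finite-dimensional space. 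Both arguments are sound; the finiteness that makes the paper's duality step work (every tensor has finite rank, every functional killing some $I^{k}$ lies in the union) is exactly what your stabilisation argument exploits. Your route is more self-contained, replacing the external citation by an explicit computation; the paper's route is shorter on the page and packages the mechanism into a standard duality statement. One small point worth spelling out if you write this up: the equality $\ker(\pi\otimes\pi)=\ker\pi\otimes A+A\otimes\ker\pi$ for a surjection of vector spaces, which you use implicitly when passing from membership in $I^{p}\otimes A+A\otimes I^{p}$ to vanishing in $(A/I^{p})\otimes(A/I^{p})$, deserves a one-line justification.
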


\begin{proof}
First observe that $I^{\infty}$ is by construction an ideal which is contained in $I\subset\ker(\varepsilon)$ and invariant under the antipode. Moreover for any $k \in \mathbb{N}$ we have
\begin{equation*}
\Delta(I^{2k})\subset \sum_{n=0}^{2k}I^{n}\otimes I^{2k-n} \subset I^{k}\otimes A + A\otimes I^{k}.
\end{equation*}
As a consequence, we have the inclusion
\begin{equation*}
(I^{k})^\perp \cdot (I^k)^\perp\subset (I^{2k})^\perp
\end{equation*}
of vector subspaces of the dual algebra $A^{*}$ (endowed with the convolution product). This implies that the increasing union
\begin{equation*}
\bigcup_{k\geqslant 1} (I^k)^\perp
\end{equation*}
is a subalgebra of $A^{*}$ and it then follows (from instance from \cite[Thm 2.3.6 (i)]{abe2004hopf}) that
\begin{equation*}
J = \left(\bigcup_{k\geqslant 1} (I^k)^\perp\right)^{\perp} = \left\{x\in A \mid f(x) = 0 \textup{ for all } f\in A^{*} \text{ such that  } \exists_{k\in \mathbb{N}}\, f_{\mid I^{k}} = 0\right\}
\end{equation*}
is a coideal of $A$. We now claim that $I^{\infty} = J$, which is enough to conclude.

Let us first consider $x\in I^{\infty}$. Then, if $f : A\to \C$ is a linear map which vanishes on $I^{k_{0}}$ for some $k_{0}\in \mathbb{N}$, it vanishes on $I^{\infty}$, hence in particular on $x$. In other words, $x\in J$. Conversely, let $x\notin I^{\infty}$. Then, there exists $k_{0}\in \mathbb{N}$ such that $x\notin I^{k_{0}}$. By taking a basis of $I^{k_{0}}$ and completing it into a basis of $A$ containing $x$, we see that there exist a linear map $f : A\to \C$ such that $f(x) = 1$ and $f_{\mid I^{k_{0}}} = 0$. As a consequence, $x$ is not in the kernel of all the elements of $\bigcup_{k\geqslant 1}(I^k)^\perp$, which means that it is not in $J$, concluding the proof.
\end{proof}

Our goal in this section is to investigate compact quantum groups with the property that $K_{\infty} = \{0\}$. To get a better intuition for the meaning of that condition, let us consider the classical case.

\begin{lemma}\label{lem:classicalstronglyconnected}
Let $G$ be a classical compact group. Then $K_{\infty} = \{0\}$ if and only if $G$ is connected.
\end{lemma}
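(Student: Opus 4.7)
The plan is to prove the two implications separately. For the easier direction, assume $G$ is disconnected; then the profinite group $G/G^{0}$ is nontrivial, so $G$ admits a continuous surjection onto some nontrivial finite group $F$, whose kernel I denote $N$. The indicator $\mathbf{1}_{N}$ is the pullback of $\mathbf{1}_{\{e\}}\in\mathbb{C}[F]=\Pol(F)$, hence lies in $\Pol(G)$. Setting $p:=\mathbf{1}-\mathbf{1}_{N}$ produces a nonzero element with $\varepsilon(p)=p(e)=0$ and $p^{2}=p$, so $p\in K_{1}$ and $p=p^{n}\in K_{n}$ for every $n\geq 1$, witnessing $p\in K_{\infty}\setminus\{0\}$.

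For the converse, assume $G$ is connected and fix $f\in\Pol(G)\setminus\{0\}$; the goal is to produce some $n_{0}$ with $f\notin K_{n_{0}}$. Since $f$ is a finite linear combination of matrix coefficients of finite-dimensional representations, it lies in $\Pol(H)$ for some compact Lie quotient $H$ of $G$ (take $H$ to be the image of $G$ inside $\prod_{i}U(V_{\pi_{i}})$ for the irreducibles $\pi_{i}$ appearing in $f$), and $H$ is connected as a continuous image of $G$. Then $H_{\C}$ is an irreducible affine algebraic group, so $\Pol(H)=\mathcal{O}(H_{\C})$ is a Noetherian integral domain, and Krull's intersection theorem yields $\bigcap_{n}(\mathfrak{m}_{e}^{H})^{n}=0$; in particular there exists $n_{0}$ with $f\notin K_{n_{0}}(\Pol(H))$.

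It remains to transfer this non-containment to $\Pol(G)$, for which it suffices to establish the inclusion $K_{n}(\Pol(G))\cap \Pol(H)\subseteq K_{n}(\Pol(H))$ for every $n$. Any witnessing expression for $f\in K_{n}(\Pol(G))$ uses only finitely many elements of $K_{1}(\Pol(G))$, all of which lie in some larger compact Lie quotient $\tilde H$ of $G$ that may be assumed to dominate $H$; the claim thus reduces to the analogous inclusion for a surjection $\tilde H\twoheadrightarrow H$ of compact Lie groups. For this step, I plan to exploit the action of $U(\mathfrak{g}_{\C})$ on $\Pol(G)$ by left-invariant differential operators: on a connected compact Lie group $G$, $f\in(\mathfrak{m}_{e}^{G})^{n}$ is equivalent to $(u\cdot f)(e)=0$ for every $u$ of PBW filtration degree $<n$, and for $f\in \Pol(H)\subseteq\Pol(\tilde H)$ the quantity $(u\cdot f)(e)$ depends only on the image of $u$ under the filtered surjection $U(\tilde{\mathfrak{h}}_{\C})\twoheadrightarrow U(\mathfrak{h}_{\C})$, whence the desired inclusion.

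The main obstacle I anticipate is this last identification: linking the algebraically-defined ideal $K_{n}=(\mathfrak{m}_{e})^{n}$ with the kernel of all differential operators of PBW degree $<n$ rests on regularity of the local ring $\Pol(H)_{\mathfrak{m}_{e}}$ at the smooth point $e\in H_{\C}$, whose $\mathfrak{m}_{e}$-adic completion is the formal power series ring $\mathbb{C}[[x_{1},\ldots,x_{\dim H}]]$. A more direct route bypasses the enveloping algebra and compares completions along the surjection $\tilde H\twoheadrightarrow H$: the induced embedding $\mathbb{C}[[y_{1},\ldots,y_{m}]]\hookrightarrow \mathbb{C}[[y_{1},\ldots,y_{m},z_{1},\ldots,z_{d-m}]]$ is strictly compatible with the respective $\mathfrak{m}$-adic filtrations, and faithful flatness of completion for the Noetherian local rings involved then delivers the claim.
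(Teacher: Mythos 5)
Your argument is correct, but it takes a genuinely different route from the paper's. The paper argues softly via Gelfand duality: it passes to the norm closure of $K_{\infty}$ in $C(G)$, identifies the hull of that closed ideal as an open (hence clopen) subgroup, pins this subgroup down as $G^{0}$ using indicator functions, and concludes from $K_{\infty}=\{0\}$ iff $\overline{K_{\infty}}=\{0\}$; no Lie theory or commutative algebra enters. You instead use that $\Pol(G)$ is the directed union of the subalgebras $\Pol(H)$ over the connected compact Lie quotients $H$ of $G$ (so that $K_{n}(\Pol(G))=\bigcup_{H}K_{n}(\Pol(H))$), identify $\Pol(H)$ with the coordinate ring of the connected, hence irreducible, group $H_{\C}$, apply Krull's intersection theorem in that Noetherian domain, and transfer the conclusion back up via strict compatibility of the $\mathfrak{m}_{e}$-adic filtrations along surjections of connected Lie groups, verified on formal completions. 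Your proof is longer but purely algebraic and more informative: it interprets $K_{n}$ as an order-of-vanishing ideal at $e$ uniformly over all Lie quotients, and it avoids the delicate interaction between the infinite intersection $K_{\infty}$ and norm closures on which the paper's proof leans. Your disconnected direction, built on an open normal subgroup coming from the profinite quotient $G/G^{0}$ rather than on a connected component, is also the more robust choice, since the indicator of a component need not be a representative function when $G/G^{0}$ is infinite. Two points to make explicit in a full write-up: (i) the inclusion $\mathfrak{m}_{\tilde e}^{n}\cap\Pol(H)\subseteq\mathfrak{m}_{e}^{n}$ genuinely requires injectivity of the cotangent map $\mathfrak{m}_{e}/\mathfrak{m}_{e}^{2}\to\mathfrak{m}_{\tilde e}/\mathfrak{m}_{\tilde e}^{2}$ --- guaranteed here because the differential of a surjection of Lie groups is onto, and exactly what fails for a flat local map such as $\C[[x]]\to\C[[y]]$, $x\mapsto y^{2}$; (ii) the passage between the global powers $\mathfrak{m}_{e}^{n}$ and their images in the localization and completion is harmless because $\mathfrak{m}_{e}$ is maximal, so $\mathfrak{m}_{e}^{n}$ is $\mathfrak{m}_{e}$-primary and therefore saturated with respect to localization at $e$.
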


\begin{proof}
It follows from standard Gelfand duality arguments that the closure of $K_{\infty}$ in the C*-algebra $C(G)$ of continuous complex-valued functions on $G$ is the ideal of functions vanishing outside some open subgroup $H\subset G$. Since any open subgroup in a topological group is also closed, it follows that $H$ is a union of connected components of $G$. Let now $Z$ be a connected component of $G$ not containing the neutral element. Then, the indicator function $p = \mathbf{1}_{Z}$ is a continuous function on $G$ and moreover belongs to $K_{1}$. Thus, $p = p^{n}\in K_{n}$ for all $n\in \mathbb{N}$, i.e.\ $p\in K_{\infty}$ so that $Z$ is not contained in $H$. As a conclusion, $H$ is the connected component of the identity and the result follows from the fact that $K_{\infty}=\{0\}$  if and only if its closure equals $\{0\}$.
\end{proof}

In view of this result, we might want to call a compact quantum group $\QG$ connected if $K_{\infty}=\{0\}$. However, there is already a notion of connectedness in the literature, introduced by Wang in \cite{wang2009simple} and studied in detail in \cite{connected14}: a compact quantum group is said to be \emph{connected} if $\Pol(\QG)$ does not contain any finite-dimensional Hopf *-subalgebra. It turns out that vanishing of $K_{\infty}$ is stronger than this (see Proposition \ref{prop:stronglyconnected} below), hence we choose the following terminology.

\begin{definition}
A compact quantum group $\QG$ is said to be \emph{strongly connected} if $K_{\infty} = \{0\}$.
\end{definition}

To see the link with the aforementioned definition of connectedness, we need to generalize the idea concerning projections used in the proof of Lemma \ref{lem:classicalstronglyconnected}.

\begin{lemma}\label{lem:projectionsnotconnected}
Let $\QG$ be a compact quantum group and let $p\in \Pol(\QG)$ be a non-trivial projection. Then, $p-\varepsilon(p)1\in K_{\infty}(\Pol(\QG))\neq\{0\}$.
\end{lemma}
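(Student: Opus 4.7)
The plan is to reduce the statement to the observation that any projection in $\Pol(\QG)$ is, up to a scalar, an element of $K_1$ which is fixed under taking powers. The only point to be a little careful about is what value $\varepsilon(p)$ can take, and then one handles the cases $\varepsilon(p)=0$ and $\varepsilon(p)=1$ separately.

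First I would note that since $\varepsilon:\Pol(\QG)\to \C$ is a unital $^*$-homomorphism and $p$ satisfies $p^2=p=p^*$, the scalar $\varepsilon(p)$ is a self-adjoint idempotent in $\C$, hence $\varepsilon(p)\in\{0,1\}$. Set $q=p-\varepsilon(p)\mathbf{1}$, so that $q\in K_1=\ker\varepsilon$ by construction, and $q\neq 0$ since $p$ is non-trivial (neither $0$ nor $\mathbf{1}$).

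Next I would treat the case $\varepsilon(p)=0$. Here $q=p\in K_1$, and because $p$ is idempotent we have $p = p^n\in K_1^n = K_n$ for every $n\geq 1$. Therefore $p\in \bigcap_{n\geq 1} K_n = K_\infty$, which gives the desired membership. For the case $\varepsilon(p)=1$ I would reduce to the previous one: the element $p':=\mathbf{1}-p$ is again a projection, it is non-trivial, and $\varepsilon(p')=0$; by what was just shown $p'\in K_\infty$, and consequently $q = p - \mathbf{1} = -p' \in K_\infty$.

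Combining the two cases, $q=p-\varepsilon(p)\mathbf{1}\in K_\infty$, and since $q\neq 0$ we also obtain $K_\infty(\Pol(\QG))\neq\{0\}$. There is essentially no obstacle here; the only subtlety to flag is that ``projection'' is being used in the Hopf $^*$-algebra sense ($p^2=p=p^*$), so that $\varepsilon(p)\in\{0,1\}$ is automatic and no analytic input is required.
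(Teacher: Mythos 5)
Your proof is correct and follows essentially the same approach as the paper: the key observation in both is that $q=p-\varepsilon(p)\mathbf{1}$ is, up to sign, idempotent, hence lies in $K_n$ for all $n$. The paper merely treats the two cases $\varepsilon(p)\in\{0,1\}$ simultaneously by noting $q^2\in\{q,-q\}$, whereas you split them and reduce the second to the first via $\mathbf{1}-p$; this is a cosmetic difference only.
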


\begin{proof}
Because the counit $\varepsilon$ is a *-homomorphism, $\varepsilon(p)$ is an idempotent in $\C$, hence equals $0$ or $1$. In both cases, $q = p - \varepsilon(p)\in K_{1}$ and satisfies $q^{2}\in \{q, -q\}$. Thus, $q = \pm q^{n} \in K_{n}$ for all $n$, hence $q \in K_{\infty}$.
\end{proof}

We are now ready to state and prove several fundamental properties of strong connectedness. Note that for duals of discrete groups, this has already been studied as the residual nilpotency of the augmentation ideal of the group ring, and the monograph \cite{passi1979group} gives a comprehensive survey of the known results to which we will refer.

\begin{proposition}\label{prop:stronglyconnected}
Let $\QG$ be a compact quantum group. The following hold:
\begin{enumerate}
\item If $\QG$ is the dual of a discrete group $\Gamma$, then it is strongly connected if and only if $\Gamma$ is residually `torsion-free nilpotent' (recall that it means that for any non-identity element $\gamma \in \Gamma$, there is a normal subgroup $N$ of $\Gamma$ such that $\gamma\notin N$ and $\Gamma/N$ is torsion-free nilpotent);
\item If $\QG$ is strongly connected, then it is connected but the converse does not hold in general;
\item If $\QG$ is topologically generated (in the sense of \cite{chirvasitu20}) by strongly connected quantum subgroups, then it is strongly connected.
\end{enumerate}
\end{proposition}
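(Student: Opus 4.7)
Write $K_{n}:=K_{n}(\Pol(\QG))$ throughout, and handle the three statements in turn.

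For (1), when $\QG=\widehat{\Gamma}$ we have $\Pol(\QG)=\C[\Gamma]$ with counit $\varepsilon(g)=1$, so $K_{1}$ is the augmentation ideal $I(\Gamma)$ of $\C[\Gamma]$ and $K_{n}=I(\Gamma)^{n}$. The condition $K_{\infty}=\{0\}$ thus becomes residual nilpotence of the complex augmentation ideal, a classical question. By the Magnus--Jennings description of dimension subgroups over a field of characteristic zero (see \cite[Chap.~11]{passi1979group}), the $n$-th complex dimension subgroup $D_{n}^{\C}(\Gamma)=\{g\in\Gamma : g-1\in I(\Gamma)^{n}\}$ equals the isolator $\sqrt{\gamma_{n}(\Gamma)}$ of the $n$-th lower central series term, and consequently $\bigcap_{n}I(\Gamma)^{n}=\{0\}$ if and only if $\bigcap_{n}\sqrt{\gamma_{n}(\Gamma)}=\{e\}$, which is exactly the definition of residual torsion-free nilpotence. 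All the content lies in the Magnus--Jennings theorem; the translation itself is routine.

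For (2), the forward direction is proved by contraposition: if $\Pol(\QG)$ contains a finite-dimensional Hopf $*$-subalgebra $A$ of dimension at least $2$, the Artin--Wedderburn decomposition of the finite-dimensional unital $*$-algebra $A$ produces a nontrivial self-adjoint projection $p\in A$, and Lemma \ref{lem:projectionsnotconnected} then yields $0\neq p-\varepsilon(p)\mathbf{1}\in K_{\infty}$. For the converse, I would combine (1) with the observation that $\widehat{\Gamma}$ is connected precisely when $\Gamma$ is torsion-free --- this because finite-dimensional Hopf $*$-subalgebras of $\C[\Gamma]$ are group algebras of finite subgroups of $\Gamma$ (by Cartier--Gabriel--Kostant, a finite-dimensional cocommutative Hopf algebra over $\C$ is a group algebra). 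It then suffices to pick any nontrivial torsion-free perfect group, for instance Higman's group: since any torsion-free nilpotent quotient of a perfect group is trivial, such $\Gamma$ is not residually torsion-free nilpotent, so the corresponding $\widehat{\Gamma}$ is connected but not strongly connected.

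For (3), let $(\QH_{i})_{i\in I}$ be the family and $q_{i}:\Pol(\QG)\to\Pol(\QH_{i})$ the quotient maps. Because each $q_{i}$ intertwines the counits, $q_{i}(K_{1})=K_{1}(\Pol(\QH_{i}))$, and multiplicativity of $q_{i}$ then gives $q_{i}(K_{n})=K_{n}(\Pol(\QH_{i}))$ for every $n\geq 1$. Hence
\begin{equation*}
q_{i}(K_{\infty})\subset\bigcap_{n}q_{i}(K_{n})=K_{\infty}(\Pol(\QH_{i}))=\{0\},
\end{equation*}
so $K_{\infty}\subset \ker(q_{i})=\I_{\QH_{i}}$ for every $i$, and therefore $K_{\infty}\subset\bigcap_{i}\I_{\QH_{i}}$. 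Since $K_{1}$ is a Hopf $*$-ideal, Proposition \ref{prop:hopfideal} combined with the $*$-closedness of $K_{\infty}$ (as an intersection of $*$-closed subspaces) shows that $K_{\infty}$ is itself a Hopf $*$-ideal. Topological generation means by definition that the largest Hopf $*$-ideal contained in $\bigcap_{i}\I_{\QH_{i}}$ is zero, forcing $K_{\infty}=\{0\}$. The main obstacle in the proposition lies in Part (1), where the "only if" direction is genuinely deep and rests on the Magnus--Jennings identification of dimension subgroups over $\C$; choosing the counterexample in Part (2) is a smaller but still nontrivial conceptual point, while Part (3) is essentially a diagram chase once the Hopf $*$-ideal status of $K_{\infty}$ is in hand.
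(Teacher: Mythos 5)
Parts (2) and (3) of your argument are correct and essentially the same as the paper's proof. For (2) the paper likewise extracts a non-trivial projection from a finite-dimensional Hopf $*$-subalgebra (which is automatically a finite-dimensional $C^*$-algebra, since $\Pol(\QG)$ carries a faithful $C^*$-norm --- your appeal to Artin--Wedderburn for a general finite-dimensional $*$-algebra is slightly loose, as such algebras need not be semisimple, but the conclusion is right here) and then invokes Lemma \ref{lem:projectionsnotconnected}; for the failure of the converse it points, as you do, to the dual of a torsion-free group that is not residually `torsion-free nilpotent', and your choice of Higman's group is a perfectly good concrete witness. For (3) the paper runs exactly your diagram chase, with Proposition \ref{prop:hopfideal} supplying the fact that $K_{\infty}$ is a Hopf $*$-ideal so that topological generation can be applied.

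The genuine gap is in part (1), in the direction ``$\Gamma$ residually `torsion-free nilpotent' $\Rightarrow$ $K_{\infty}=\{0\}$''. The Magnus--Jennings identification $D_{n}^{\C}(\Gamma)=\sqrt{\gamma_{n}(\Gamma)}$ only controls which elements of the special form $g-1$ lie in $I(\Gamma)^{n}$; it does give you, essentially for free, that $\bigcap_{n}I(\Gamma)^{n}=\{0\}$ implies $\bigcap_{n}\sqrt{\gamma_{n}(\Gamma)}=\{e\}$, but the converse does not follow ``consequently''. A priori $\bigcap_{n}I(\Gamma)^{n}$ could contain a non-zero combination $\sum_{i}c_{i}g_{i}$ without containing any element $g-1$, and nothing in the dimension-subgroup theorem excludes this. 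Closing the gap requires (i) separating a finite subset of $\Gamma$ in a single torsion-free nilpotent quotient (possible because that class is closed under finite subdirect products), and (ii) the genuinely non-trivial theorem that $\bigcap_{n}I(\Lambda)^{n}=\{0\}$ for $\Lambda$ torsion-free nilpotent over a field of characteristic zero. The paper sidesteps all of this by citing the statement it actually needs, namely \cite[Thm VI.2.26]{passi1979group}, which asserts precisely the equivalence between residual nilpotence of the augmentation ideal of $\C[\Gamma]$ and residual `torsion-free nilpotence' of $\Gamma$; your citation should be corrected to that result (and note that Passi's book has no Chapter 11).
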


\begin{proof}
\begin{enumerate}
\item This is the contents of \cite[Thm VI.2.26]{passi1979group}.
\item Assume that  $\QG$ is not connected and $\Pol(\QH)\subset \Pol(\QG)$ is a finite-dimensional Hopf *-subalgebra. As $\Pol(\QH)$  is a finite-dimensional $C^*$-algebra, it contains non-zero projections, hence the result follows from Lemma \ref{lem:projectionsnotconnected}.

For the dual of a discrete group, connectedness is equivalent to torsion-freeness. Hence, the dual of any torsion-free group which is not residually nilpotent is connected but not strongly connected.
\item Assume that $\QG$ is topologically generated by quantum subgroups $(\QH_{i})_{i\in I}$ and let us consider the corresponding surjective $*$-homomorphisms $\pi_{i} : \Pol(\QG)\to\Pol(\QH_{i})$. Obviously, $\pi_{i}(K_{\infty}(\Pol(\QG)))\subset K_{\infty}(\Pol(\QH_{i}))$ so that under the assumption of the statement,
\begin{equation*}
K_{\infty}(\Pol(\QG))\subset \bigcap_{i\in I}\ker(\pi_{i}).
\end{equation*}
By definition of topological generation, the right-hand side does not contain any non-zero Hopf *-ideal, hence the result.
\end{enumerate}
\end{proof}

\begin{remark}
The first point in the previous proposition suggests that there may be a connection between strong connectedness of a compact quantum group and torsion-freeness of its dual discrete quantum groups. There is however no relationship with torsion-freeness in the sense of Meyer \cite{meyer2008homological}. Indeed,
\begin{enumerate}
\item $SO(N)$ is connected but not simply connected, hence it is strongly connected but its dual is not torsion free (it has a projective representation comming from its universal covering which yields an ergodic finite-dimensional action not equivariantly Morita equivalent to the trivial one);
\item If $\Gamma$ is a torsion-free group which is not residually nilpotent, then it is torsion-free while its dual is not strongly connected.
\end{enumerate}
\end{remark}

The topological generation criterion can prove useful to provide examples which are neither commutative nor cocommutative.

\begin{proposition}
The free unitary quantum group $U_{N}^{+}$ is strongly connected for all $N\in \mathbb{N}$.
\end{proposition}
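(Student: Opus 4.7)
The plan is to invoke Proposition \ref{prop:stronglyconnected}(3) by exhibiting strongly connected quantum subgroups of $U_{N}^{+}$ that topologically generate it. The case $N=1$ is trivial since $U_{1}^{+}\cong U(1)$ is a connected classical Lie group. For $N\geqslant 2$, two natural candidates present themselves: the classical unitary group $U(N)$, obtained from the abelianisation of $\Pol(U_{N}^{+})$, and the dual $\widehat{F_{N}}$ of the free group on $N$ generators, obtained as the quotient of $\Pol(U_{N}^{+})$ by the Hopf *-ideal generated by the off-diagonal coefficients $\{u_{ij} : i\neq j\}$, after which the diagonal generators $u_{ii}$ become a family of free unitaries implementing the group-like elements of $\C[F_{N}]$.

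Both of these quantum subgroups are strongly connected. Since $U(N)$ is a connected classical compact Lie group, Lemma \ref{lem:classicalstronglyconnected} gives $K_{\infty}(\Pol(U(N)))=\{0\}$. For $\widehat{F_{N}}$, the classical theorem of Magnus asserts that $F_{N}$ is residually torsion-free nilpotent, so Proposition \ref{prop:stronglyconnected}(1) applies and yields strong connectedness of $\widehat{F_{N}}$.

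The remaining and main step is to show that $U(N)$ and $\widehat{F_{N}}$ together topologically generate $U_{N}^{+}$, that is, that the largest Hopf *-ideal of $\Pol(U_{N}^{+})$ contained in the intersection of the commutator ideal with the ideal generated by the off-diagonal matrix coefficients is trivial. This is the substantial point; it can be imported from the topological generation results developed for free quantum groups in \cite{chirvasitu20} (or verified directly via a Tannakian argument comparing the intertwiner spaces of $U_{N}^{+}$ with those of the quantum subgroup generated by $U(N)$ and $\widehat{F_{N}}$, for which Woronowicz's Tannaka--Krein reconstruction gives a clean framework). Once this input is secured, Proposition \ref{prop:stronglyconnected}(3) immediately yields that $U_{N}^{+}$ is strongly connected for all $N\in\N$.
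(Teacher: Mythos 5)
Your proof is correct and follows essentially the same route as the paper: both arguments handle $N=1$ by connectedness of the circle, and for $N\geqslant 2$ combine the topological generation of $U_N^+$ by $U(N)$ and $\widehat{\mathbb{F}_N}$ from \cite{chirvasitu20} with strong connectedness of $U(N)$ (Lemma \ref{lem:classicalstronglyconnected}) and of $\widehat{\mathbb{F}_N}$ (Magnus's residual torsion-free nilpotency of free groups), then apply Proposition \ref{prop:stronglyconnected}(3).
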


\begin{proof}
For $N = 1$, $U_{N}^{+}$ is just the circle, which is strongly connected because it is connected. For $N\geqslant 2$, the result immediately follows from the following two facts:
\begin{itemize}
\item $U_{N}^{+}$ is topologically generated by $U(N)$ and $\mathbb{F}_{N}$ by \cite{chirvasitu20};
\item $K_{\infty}(\Pol(U(N))) = \{0\}$ by connectedness while $K_{\infty}(\C[\mathbb{F}_{N}]) = \{0\}$ as $\mathbb{F}_{N}$ is residually `torsion-free nilpotent' by a result of Magnus \cite{magnus1935beziehungen}.
\end{itemize}
\end{proof}

For $O_{N}^{+}$ one cannot apply directly the same strategy, because the diagonal quotient is $\Z_{2}^{\ast N}$, which is not torsion free. To get a better insight, let us deal with the very special case $N=2$.

\begin{lemma}
The strongly connected component of the identity of $O_{2}^{+}$ is the circle $\T$.
\end{lemma}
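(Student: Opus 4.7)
The plan is to prove the ideal equality $K_\infty(\Pol(O_2^+)) = \I_\T$, interpreting $(O_2^+)^{00}$ as the quantum subgroup attached to the Hopf $^*$-ideal $K_\infty(\Pol(O_2^+))$ (which is a Hopf ideal by Proposition~\ref{prop:hopfideal}, so that this is indeed the largest strongly connected quantum subgroup of $O_2^+$). The containment $K_\infty(\Pol(O_2^+)) \subset \I_\T$ is automatic: the torus $\T = SO(2)$ is a classical connected compact group, hence strongly connected by Lemma~\ref{lem:classicalstronglyconnected}, and the quotient $q_\T$ sends $K_\infty(\Pol(O_2^+))$ onto $K_\infty(\Pol(\T)) = 0$.

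For the reverse containment I would first identify $\I_\T$ as the two-sided ideal generated by the two elements $X := u_{11}-u_{22}$ and $Y := u_{12}+u_{21}$: in the quotient $\Pol(O_2^+)/\langle X, Y\rangle$ the defining relations collapse to $a^2+b^2=1$ with $a := u_{11} = u_{22}$ and $b := u_{12} = -u_{21}$ commuting, which is precisely $\Pol(\T)$. Moreover this two-sided ideal is automatically Hopf $^*$-stable: $S(u_{ij}) = u_{ji}$ fixes both $X$ and $Y$, and a direct expansion of $\Delta(X)$ and $\Delta(Y)$ writes each as a sum of elementary tensors in which one factor is already $X$ or $Y$. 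So the task reduces to proving $X, Y \in K_\infty(\Pol(O_2^+))$.

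The heart of the argument is a pair of self-improving identities. Set $\alpha = u_{11}-1$, $\beta = u_{22}-1$, $\gamma = u_{12}$, $\delta = u_{21}$, all lying in $K_1$. The diagonal orthogonality relations $(1+\alpha)^2 + \gamma^2 = 1$ and $(1+\beta)^2 + \delta^2 = 1$ give $\alpha, \beta \in K_2$ at once. Comparing sums of squares along rows and columns of the fundamental matrix forces $u_{12}^2 = u_{21}^2$ and $u_{11}^2 = u_{22}^2$, and the second of these rewrites as
$$2X = -(\alpha^2-\beta^2) = -\alpha X - X\beta.$$
Since $\alpha, \beta \in K_2$, this identity is self-improving: $X \in K_n$ implies $X \in K_{n+2}$, and starting from $X = \alpha - \beta \in K_2$ induction yields $X \in K_\infty$. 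For $Y$, the two mixed orthogonality relations $(uu^T)_{12} = 0$ and $(u^Tu)_{12} = 0$ translate to $Y + \alpha\delta + \gamma\beta = 0$ and $Y + \alpha\gamma + \delta\beta = 0$ respectively; summing them produces $2Y = -\alpha Y - Y\beta$, and the same induction delivers $Y \in K_\infty$.

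The step I view as hardest is locating this pair of self-improving identities: one needs to exploit the symmetries $\gamma^2 = \delta^2$ and $u_{11}^2 = u_{22}^2$ arising from comparing row and column norms, and to combine the two mixed relations additively in order to obtain a cancellation that isolates $Y$ from its single-factor counterparts. Once the identities $2X + \alpha X + X\beta = 0$ and $2Y + \alpha Y + Y\beta = 0$ are on the page the inductive bootstrapping is essentially forced, and the remaining bookkeeping around the identification $\langle X, Y\rangle = \I_\T$ and its Hopf stability is routine.
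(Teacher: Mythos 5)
Your argument is correct, and its engine is the same as the paper's: for each generator $Z$ of $\I_{\T}$ one produces a ``self-improving'' identity expressing $Z$ as a sum of products of $Z$ with elements of $K_{2}$, bootstraps to $Z\in K_{\infty}$, and obtains the reverse inclusion $K_{\infty}\subset\I_{\T}$ from connectedness of $\T$. The difference is one of presentation. The paper first invokes the isomorphism $O_{2}^{+}\cong SU_{-1}(2)$, in which $\I_{\T}$ is generated by the single element $\gamma$ and the self-improving identity $\gamma=-\tfrac{1}{4}\left(\gamma(\beta+\beta^{*})+(\beta+\beta^{*})\gamma\right)$ falls out of the anticommutation relation between the two generators; you work directly with the orthogonal generators $u_{ij}$, which costs you two generators $X,Y$ of $\I_{\T}$ instead of one, but spares you the $SU_{-1}(2)$ picture entirely. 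Your identities all check: $\alpha X+X\beta=\alpha^{2}-\beta^{2}$ telescopes as claimed, summing the two mixed orthogonality relations does give $2Y+\alpha Y+Y\beta=0$, and the quotient $\Pol(O_{2}^{+})/\langle X,Y\rangle$ is indeed $\Pol(\T)$ --- though it would be worth saying explicitly that the commutativity $ab=ba$ of the quotient is forced by the off-diagonal orthogonality relations $u_{11}u_{21}+u_{12}u_{22}=0$, since this is the one point where relations beyond $a^{2}+b^{2}=1$ are needed. The only cosmetic remark is that your induction for $X$ runs through the even-indexed ideals $K_{2k}$ and that for $Y$ through the odd-indexed ones $K_{2k+1}$; as the filtration is decreasing this is immaterial.
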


\begin{proof}
Recall that $O_{2}^{+}$ is isomorphic to $SU_{-1}(2)$, so that $\Pol(O_{2}^{+})$ is generated by two elements $\alpha$ and $\gamma$ such that $\gamma$ is normal, anti-commutes with $\alpha$ and 
\begin{equation*}
\alpha\alpha^{*} + \gamma\gamma^{*} = 1.
\end{equation*}
Moreover, $\gamma\in K_{1}(\Pol(O_{2}^{+}))$ and $\beta = \alpha - 1\in K_{1}(\Pol(O_{2}^{+}))$.
Now, the equation above yields
\begin{equation*}
\beta + \beta^{*} = \beta^{*}\beta + \gamma^{*}\gamma\in K_{2}
\end{equation*}
while the anti-commutation relations translate into
\begin{equation*}
\gamma(\beta + \beta^{*}) = -(\beta + \beta^{*})\gamma - 4\gamma
\end{equation*}
so that
\begin{equation*}
\gamma = \frac{-1}{4}\left(\gamma(\beta + \beta^{*}) + (\beta + \beta^{*})\gamma\right).
\end{equation*}
As a consequence, if $\gamma\in K_{n}$ then $\gamma\in K_{n+1}$. Since $\gamma\in K_{1}$, it follows by a straightforward induction that $\gamma\in K_{\infty}$. Thus, $\langle \gamma\rangle\subset K_{\infty}$. On the other hand, if $\pi$ denotes the surjection onto $\Pol(\T) = \Pol(O_{2}^{+})/\langle\gamma\rangle$, we have
\begin{equation*}
\pi(K_{\infty}(\Pol(O_{2}^{+}))\subset K_{\infty}(\Pol(\T)) = \{0\},
\end{equation*}
so that $K_{\infty}\subset \langle\gamma\rangle$, concluding the proof.
\end{proof}

\subsection{Totally strongly disconnected quantum groups}

It is quite natural to consider the Hopf *-algebra $\Pol(\QG)/K_{\infty}$ to be the function algebra on the strongly connected component of the identity. We will therefore denote it by $\Pol(\QG^{00})$, the double $0$ exponent being meant to distinguish it from the connected component of the identity $\QG^{0}$ in the sense of \cite{connected14}. Of course, if $G$ is classical then this coincides with the usual connected component of the identity. Another extreme case, in contrast to being strongly connected, is when $\QG^{00}$ reduces to the trivial group.

\begin{definition}
A compact quantum group is said to be \emph{totally strongly disconnected} if $\QG^{00}$ is trivial, i.e.\ if $K_{\infty} = K_{1}$.
\end{definition}

\begin{remark}
One may wonder whether the property of being totally strongly disconnected can be expressed in terms of representation theory. We do not know, but we can at least mention that it is not a property of the corresponding C*-tensor category since it is not preserved under monoidal equivalence. Indeed, we will see below that $S_{N}^{+}$ is totally strongly disonnected, while the quantum automorphism group of $(M_{N}(\C), \mathrm{tr})$, which is monoidally equivalent to $S_{N^{2}}^{+}$, is strongly connected (because $U_{N}^{+}$ is).
\end{remark}

By definition, $\QG^{00}$ is strongly connected, hence connected. As a consequence, it is a quantum subgroup of the connected component of the identity $\QG^{0}$ in the sense of \cite[Def 4.11]{connected14}. This implies that if $\QG$ is totally disconnected in the sense of \cite{connected14}, then it is also totally strongly disconnected.

Before going further, let us describe that property for duals of discrete groups. Interestingly, there turns out to be a simple characterization. We will denote by $\gamma_{2}(\Gamma)$ the subgroup of $\Gamma$ generated by commutators and by $\sqrt{\gamma_{2}(\Gamma)}$ the group of all elements of $\Gamma$ of which a finite power lies in $\gamma_{2}(\Gamma)$.

\begin{proposition}
Let $\Gamma$ be a discrete group. Then $\widehat{\Gamma}$ is totally strongly disconnected if and only if its abelianization is torsion.
\end{proposition}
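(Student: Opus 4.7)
The plan is to reduce the statement to the vanishing of $K_1/K_2$ inside $\Pol(\widehat{\Gamma}) = \C[\Gamma]$, and then to identify that quotient with $\C \otimes_\Z \Gamma^{\mathrm{ab}}$, where $\Gamma^{\mathrm{ab}} = \Gamma/\gamma_2(\Gamma)$.

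First I will show that $\widehat{\Gamma}$ is totally strongly disconnected if and only if $K_1 = K_2$. By definition this means $K_\infty = K_1$, which is equivalent to $\Pol(\widehat{\Gamma})/K_\infty = \C$. The chain $K_1 \supseteq K_2 \supseteq K_\infty$ gives one direction; conversely, if $K_2 = K_1$, then an immediate induction using $K_{n+1} = K_1 \cdot K_n$ yields $K_n = K_1$ for every $n$, whence $K_\infty = K_1$.

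The heart of the argument is the isomorphism $K_1/K_2 \cong \C \otimes_\Z \Gamma^{\mathrm{ab}}$. The assignment $g \mapsto (g-1) + K_2$ is a group homomorphism from $\Gamma$ into the additive group $K_1/K_2$, because
\[
(gh - 1) - (g - 1) - (h - 1) = (g-1)(h-1) \in K_2.
\]
It therefore factors through $\Gamma^{\mathrm{ab}}$ and, after $\C$-linear extension, gives a surjection $\Phi : \C \otimes_\Z \Gamma^{\mathrm{ab}} \to K_1/K_2$ (surjectivity follows since $\{g-1 : g \in \Gamma\}$ spans $K_1$). For the inverse, I will use the $\C$-basis $\{g-1 : g \in \Gamma \setminus \{e\}\}$ of $K_1$ to define $\Psi : K_1 \to \C \otimes_\Z \Gamma^{\mathrm{ab}}$ by $\Psi(g-1) = 1 \otimes \bar g$, where $\bar g$ denotes the class in $\Gamma^{\mathrm{ab}}$. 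Since $K_2$ is spanned by products $(g-1)(h-1) = (gh-1) - (g-1) - (h-1)$, the computation $\Psi((g-1)(h-1)) = 1 \otimes (\overline{gh} - \bar g - \bar h) = 0$ shows that $\Psi$ kills $K_2$ and passes to a map $\bar\Psi : K_1/K_2 \to \C \otimes_\Z \Gamma^{\mathrm{ab}}$; the two maps agree with identity on generators, so they are mutually inverse.

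To finish, recall that for any abelian group $A$ the tensor product $\C \otimes_\Z A$ vanishes if and only if $A$ is torsion (by flatness of $\C$ over $\Z$, or simply because $\C \otimes_\Z A = \C \otimes_\Q (\Q \otimes_\Z A)$ and $\Q \otimes_\Z A = 0$ exactly when every element of $A$ has finite order). Combining the steps, $\widehat{\Gamma}$ is totally strongly disconnected if and only if $K_1 = K_2$, if and only if $\C \otimes_\Z \Gamma^{\mathrm{ab}} = 0$, if and only if $\Gamma^{\mathrm{ab}}$ is torsion. The only slightly delicate point in the whole argument is the verification that $\Psi$ is well defined modulo $K_2$; once the spanning relations of $K_2$ are handled as above, no further calculation is required.
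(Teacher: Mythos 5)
Your proof is correct, and it takes a genuinely different route from the paper's. The paper also first reduces total strong disconnectedness to the single equality $K_1=K_2$ (equivalently $\Gamma\subset 1+K_2$), but then finishes in one line by invoking the dimension-subgroup theorem \cite[Thm IV.1.5]{passi1979group}, which identifies $\Gamma\cap(1+K_2)$ with $\sqrt{\gamma_2(\Gamma)}$; the condition $\sqrt{\gamma_2(\Gamma)}=\Gamma$ is then exactly torsion of the abelianization. You instead compute the first augmentation quotient directly, establishing the isomorphism $K_1/K_2\cong\C\otimes_\Z\Gamma^{\mathrm{ab}}$ by exhibiting mutually inverse maps (your verification that $\Psi$ kills $K_2$ is the only point requiring care, and you handle it correctly via $(g-1)(h-1)=(gh-1)-(g-1)-(h-1)$), and then use that $\C\otimes_\Z A=0$ exactly when $A$ is torsion. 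Your argument is self-contained and elementary, replacing a cited external theorem by a two-line computation that is standard for group rings; this is arguably preferable here, since for $n=2$ the full strength of Passi's result is not needed. The paper's citation, on the other hand, is consistent with its use of the same reference for the deeper $n=3$ case elsewhere (in the remark following Theorem \ref{dual:Gaussian}), where a hands-on computation of $K_1/K_3$ would be considerably more involved. Both proofs are complete; yours trades brevity for transparency.
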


\begin{proof}
Obviously, $K_{\infty} = K_{1}$ if and only if $K_{2} = K_{1}$ if and only if $K_{2} + 1 = \C[\Gamma]$. Taking intersections with $\Gamma$ in the last equality then yields by \cite[Thm IV.1.5]{passi1979group} the equivalent condition $\sqrt{\gamma_{2}(\Gamma)} = \Gamma$, which in turn means that the abelianization of $\Gamma$ is torsion.
\end{proof}

\begin{remark}
A group has torsion abelianization if and only if it has no non-zero homomorphism to $\mathbb{Q}$. This can in turn be restated in a homological way by saying that the first Betti number of $\Gamma$ vanishes, or that $H_{1}(\Gamma, \mathbb{Q}) = \{0\}$.
\end{remark}

Totally strongly disconnected compact quantum groups are interesting to us because they have, by definition, trivial Gaussian part. Here is a sufficient criterion for total strong disconnectedness, which will yield our first examples of computation of Gaussian parts.

\begin{proposition}\label{prop:projections}
If $\QG$ is a compact quantum group such that $\Pol(\QG)$ is generated by projections, then it is strongly totally disconnected.
\end{proposition}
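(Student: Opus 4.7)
The plan is to show that $K_1 \subseteq K_\infty$; the reverse containment is automatic. Let $P \subseteq \Pol(\QG)$ denote a generating set of projections and note that since projections are self-adjoint, $\Pol(\QG)$ is then spanned by $1$ together with products $p_1 \cdots p_n$ with each $p_i \in P$. Lemma \ref{lem:projectionsnotconnected} already supplies the key building block: for every $p \in P$, the element $r_p := p - \varepsilon(p)\cdot 1$ lies in $K_\infty$.

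The second ingredient I would invoke is that $K_\infty$ is an ideal (indeed a Hopf ideal, by Proposition \ref{prop:hopfideal} applied to $I = K_1$, since $K_n = K_1^n$ and $K_\infty = K_1^\infty$). With this at hand, the computation reduces to a straightforward expansion: for any product of projections $p_1, \dots, p_n \in P$, writing $p_i = \varepsilon(p_i)\cdot 1 + r_{p_i}$ and multiplying out gives
\begin{equation*}
p_1 \cdots p_n \;=\; \varepsilon(p_1)\cdots\varepsilon(p_n)\cdot 1 \;+\; R,
\end{equation*}
where $R$ is a sum of products each of which contains at least one factor $r_{p_i} \in K_\infty$. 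Since $K_\infty$ is an ideal, $R \in K_\infty$, and since $\varepsilon$ is multiplicative, $\varepsilon(p_1)\cdots \varepsilon(p_n) = \varepsilon(p_1 \cdots p_n)$, so
\begin{equation*}
p_1 \cdots p_n - \varepsilon(p_1 \cdots p_n)\cdot 1 \;\in\; K_\infty.
\end{equation*}

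To finish, I would extend this by linearity: any $x \in \Pol(\QG)$ is a linear combination of such monomials (plus scalars, which contribute nothing to $x - \varepsilon(x)\cdot 1$), so $x - \varepsilon(x)\cdot 1 \in K_\infty$. Every element $x \in K_1$ satisfies $\varepsilon(x) = 0$ and hence equals $x - \varepsilon(x)\cdot 1$, yielding $K_1 \subseteq K_\infty$ and therefore $K_\infty = K_1$, i.e.\ $\QG$ is totally strongly disconnected. The only step with any subtlety is invoking that $K_\infty$ is an ideal in order to legitimately discard the cross-terms, but this is precisely what Proposition \ref{prop:hopfideal} guarantees; beyond that, the argument is a direct expansion.
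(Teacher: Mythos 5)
Your proof is correct and follows essentially the same route as the paper: the paper likewise applies Lemma \ref{lem:projectionsnotconnected} to place each $p-\varepsilon(p)1$ in $K_\infty$ and then concludes because these elements generate the ideal $K_1$, which is exactly the expansion argument you carry out explicitly (the paper leaves that generation step as a one-line remark). No gaps.
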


\begin{proof}
Assume that $\Pol(\QG)$ is generated by projections $(p_{i})_{i\in I}$. By Lemma \ref{lem:projectionsnotconnected}, $p_{i} - \varepsilon(p_{i})\in K_{\infty}$ for all $i\in I$ and since these elements generate $K_{1}$, the proof is complete.
\end{proof}

\begin{remark}
The quantum permutation group $S_{N}^{+}$ is is totally strongly disconnected in our sense while it is connected in the sense of \cite{connected14}. This shows that strong connectedness is strictly stronger than connectedness.
\end{remark}

Some straightforward examples to which the previous statement applies are the following:
\begin{itemize}
\item Finite quantum groups, i.e.\ those for which the corresponding Hopf algebra is finite-dimensional;
\item Quantum permutation groups, i.e.\ quantum subgroups of $S_{N}^{+}$ for $N \in \mathbb{N}$ (this includes for instance the quantum reflection groups $H_{N}^{s+}$ for all $1\leqslant s < +\infty$);
\item Profinite compact quantum groups in the sense of \cite{connected14}, or in the terms of \cite{CaspersSkalski}, duals of locally finite discrete quantum groups. Note also that \cite{connected14} gives examples of duals of discrete groups which are totally disconnected as compact quantum groups, hence totally strongly disconnected, but not profinite.
\end{itemize}

\begin{remark}
It is natural to wonder whether when $\QG$ is totally strongly disconnected, then $\Pol(\QG)$ is generated by projections. A negative answer can be provided by an example of a discrete group $\Gamma$ with torsion abelianization such that $\C[\Gamma]$ does not contain any non-trivial projection. For instance, the group whose existence in given in \cite[Cor 3.2]{grigorchuk2016constructions} is torsion-free (because it is orderable) and amenable (because it is locally solvable) hence satisfies the Kadison-Kaplansky conjecture (there is no non-trivial idempotent in $\C[\Gamma]$) but has trivial abelianization since it is perfect.
\end{remark}

\subsection{Link to Kac type quantum groups}

Even though the definition of strong connectedness is very general, it turns out that it entails a strong restriction on compact quantum groups. More precisely, a strongly connected compact quantum group must be of Kac type.

\begin{proposition}
A strongly connected compact quantum group is of Kac type.
\end{proposition}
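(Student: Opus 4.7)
The plan is to diagonalise $S^2$ on $\Pol(\QG)$ and show that every non-trivial eigenspace lies in $K_\infty$. From Corollary \ref{cor:generatorsKacpart}, on each irreducible unitary representation $U=(u_{ij})$ in the basis diagonalising $Q$ one has $S^2(u_{ij}) = (q_i/q_j)u_{ij}$, so $\Pol(\QG) = \bigoplus_{\lambda > 0} V_\lambda$ where $V_\lambda := \{a \in \Pol(\QG) : S^2(a) = \lambda a\}$. Being of Kac type is equivalent to $V_\lambda = \{0\}$ for every $\lambda \neq 1$, and since $\varepsilon \circ S^2 = \varepsilon$ one already has $V_\lambda \subset K_1$ whenever $\lambda \neq 1$. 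The task is to upgrade this to $V_\lambda \subset K_n$ for every $n \geq 1$; combined with the hypothesis $K_\infty = \{0\}$, this collapses $\Pol(\QG)$ to $V_1$, i.e.\ $S^2 = \id$.

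I would prove the strengthened inclusion by induction on $n$. For the step, fix $a \in V_\lambda$ with $\lambda \neq 1$ and assume $V_\mu \subset K_n$ for every $\mu \neq 1$. Because $\Delta$ is $S^2$-equivariant, $\Delta(V_\lambda) \subset \bigoplus_{\mu_1 \mu_2 = \lambda} V_{\mu_1} \otimes V_{\mu_2}$. Pulling out the $(1, \lambda)$ and $(\lambda, 1)$ components via the counit identities $(\varepsilon \otimes \id)\Delta(a) = a = (\id \otimes \varepsilon)\Delta(a)$, one checks that the reduced coproduct $\tilde\Delta(a) := \Delta(a) - a \otimes 1 - 1 \otimes a$ lies in $K_1 \otimes V_\lambda + V_\lambda \otimes K_1 + \sum_{\mu_1, \mu_2 \neq 1} V_{\mu_1} \otimes V_{\mu_2}$. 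The inductive hypothesis places each summand inside $K_1 \otimes K_n + K_n \otimes K_1 + K_n \otimes K_n$. Applying $m \circ (\id \otimes S)$, using $S(K_n) \subset K_n$ together with the antipode axiom $m \circ (\id \otimes S) \circ \Delta(a) = \varepsilon(a) \cdot 1 = 0$, one obtains $a + S(a) \in K_{n+1}$.

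The main obstacle is that the antipode axiom produces only the symmetric combination $a + S(a)$, not $a$ itself. The fix is to exploit $S$-invariance of the spectral decomposition (since $S$ commutes with $S^2$): applying the just-proved conclusion to $S(a) \in V_\lambda$ yields $S(a) + S^2(a) \in K_{n+1}$, and subtracting gives $S^2(a) - a \in K_{n+1}$. Since $S^2(a) - a = (\lambda - 1)a$ with $\lambda - 1 \neq 0$, one concludes $a \in K_{n+1}$, closing the induction and hence the proof.
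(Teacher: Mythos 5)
Your proof is correct, but it follows a genuinely different route from the one in the paper. The paper works directly with an irreducible unitary $U=(u_{ij})$: it writes the unitarity relation for $U$ and the one for $Q\overline{U}Q^{-1}$ in terms of the centred coefficients $\hat{u}_{jk}=u_{jk}-\delta_{jk}1\in K_1$, subtracts them to express $\bigl(\tfrac{q_k}{q_j}-1\bigr)u_{jk}$ as a sum of products of two elements of $K_1$ (so $u_{jk}\in K_2$ when $q_j\neq q_k$), and then bootstraps: in each product at least one factor is again a coefficient with distinct $q$'s, hence already in $K_2$, which pushes $u_{jk}$ into $K_3$ and, by induction, into $K_\infty$; Corollary \ref{cor:generatorsKacpart} then gives $I_{\textup{Kac}}\subset K_\infty$. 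You instead run the induction on the eigenspaces $V_\lambda$ of $S^2$ (which are exactly the spans of those same coefficients), using the $S^2$-equivariance of $\Delta$, the counit identities to control the reduced coproduct, and the antipode axiom; the symmetrisation issue ($a+S(a)$ versus $a$) is resolved correctly by applying the estimate to $S(a)\in V_\lambda$ and using $S^2(a)=\lambda a$ with $\lambda\neq 1$. Each step checks out: $V_\lambda\subset K_1$ from $\varepsilon\circ S^2=\varepsilon$, the containment of $\tilde\Delta(a)$ in $K_1\otimes K_n+K_n\otimes K_1+K_n\otimes K_n$, the stability $S(K_n)\subset K_n$, and the final cancellation. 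What your approach buys is a basis-free argument relying only on the Hopf-algebra axioms and the diagonalisability of $S^2$ with positive eigenvalues, avoiding the explicit unitarity computation; what the paper's approach buys is an elementary, self-contained calculation that directly exhibits the generators of $I_{\textup{Kac}}$ from Corollary \ref{cor:generatorsKacpart} inside $K_\infty$.
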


\begin{proof} 
Let $\QG$ be a compact quantum group. Following the notations of Corollary \ref{cor:generatorsKacpart}, let $U \in \Irr(\QG)$ be an $n$-dimensional unitary representation of $\QG$.

For $j, k = 1,\cdots, n$, set $\hat{u}_{jk} = u_{jk}-\delta_{jk}1 \in K_1$. 
Then if $j \neq k$ we have
\begin{equation*}
0 = \sum_{\ell=1}^{n} u_{j\ell} u^{*}_{k\ell} = \sum_{\ell=1}^{n} \hat{u}_{j\ell} \hat{u}^{*}_{k\ell} + u_{jk} + u^{*}_{kj}
\end{equation*}
and
\begin{equation*}
0 = \sum_{\ell=1}^{n} \frac{q_{k}}{q_{\ell}} u_{\ell k} u^{*}_{j\ell} = \sum_{\ell=1}^{n} \frac{q_k}{q_\ell} \hat{u}_{\ell k} \hat{u}^{*}_{\ell j} + \frac{q_{k}}{q_{j}} u_{jk} + u^{*}_{kj}.
\end{equation*}
Taking the difference, we get
\begin{equation*}
\left(\frac{q_{k}}{q_{j}} - 1\right)u_{jk} = \sum_{\ell=1}^{n} \hat{u}_{j\ell} \hat{u}^{*}_{k\ell} - \sum_{\ell=1}^{n} \frac{q_{k}}{q_{\ell}} \hat{u}_{\ell k} \hat{u}^{*}_{\ell j}.
\end{equation*}
This shows that if $q_{j}\neq q_{k}$ then $u_{jk}=\hat{u}_{jk}\in K_{2}$, and by taking adjoints, $u^{*}_{jk}= \hat{u}^*_{jk}\in K_{2}$.
If we now look again at the expression above, still assuming $q_{j}\neq q_{k}$, we see that each element of the sum on the right hand side is a product of elements in $K_1$ and $K_2$ (this follows, as whenever $q_{j}\neq q_{k}$ we necessarily have for any $\ell=1,\ldots,n$ that either $q_{\ell}\neq q_{j}$ or $q_{\ell}\neq q_{k}$). Thus in fact $u_{jk}=\hat{u}_{jk}\in K_{3}$, and similarly $u_{jk}^*=\hat{u}_{jk}^*\in K_{3}$. By a straightforward induction, we conclude that $u_{jk}, u^{*}_{jk}\in K_{\infty}$ whenever $q_{j}\neq q_{k}$.

By Corollary \ref{cor:generatorsKacpart}, the elements above generate $I_{\text{Kac}}$ as an ideal, hence $I_{\text{Kac}}\subset K_{\infty}$.
\end{proof}

As noted in the beginning of the section, because $K_{\infty}\subset K_{3}$, any Gaussian functional factors through the strongly connected part. Combining that observation with the previous statement yields Theorem A of the introduction.

\section{Examples I}  \label{sec:ExamplesI}
In this section we fully characterise the Gaussian parts of classical compact groups and duals of classical discrete groups.

\subsection{Classical compact groups}

We will now determine the Gaussian part of any classical compact group. We will in fact show a stronger result concerning the drift part of any compact quantum group. This will be done in two steps, the first being a reduction to the classical case. To do so, let us say that for a compact quantum group $\QG$, its \emph{classical version} is the quantum subgroup corresponding to the Hopf *-ideal of commutators (see for instance \cite{Daws}).

\begin{proposition}\label{prop:drifts}
Let $\QG$ be a compact quantum group. Then, its drift part is the closed subgroup of its classical version generated by all continuous one-parameter subgroups.
\end{proposition}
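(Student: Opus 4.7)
The plan is to reduce to the classical case and then to identify drifts with continuous one-parameter subgroups. Given a drift $\phi$ on $\QG$, Remark \ref{rem:drift} tells us that $\varphi_t = \exp_\star(t\phi)$ is a character for every $t \geqslant 0$. Since $\C$ is commutative, each $\varphi_t$ annihilates the commutator ideal of $\Pol(\QG)$; equivalently, applying the Leibniz rule for the hermitian derivation $\phi$ shows directly that $\phi(ab-ba) = 0$ for all $a, b \in \Pol(\QG)$, from which it follows that $\phi$ vanishes on the whole commutator ideal. Thus $\phi$ factors through the classical version $G_{\textup{cl}}$ of $\QG$, and conversely any drift on $G_{\textup{cl}}$ pulls back along $q_{G_{\textup{cl}}}$ to a drift on $\QG$. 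This yields a bijection between drifts on $\QG$ and drifts on $G_{\textup{cl}}$, compatible with the relation ``factors through a given quantum subgroup'', so that the drift part of $\QG$ is contained in $G_{\textup{cl}}$ and coincides there with the drift part of $G_{\textup{cl}}$. The problem thus reduces to computing the drift part of a classical compact group.

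For $G$ a classical compact group, I would next identify drifts on $\Pol(G)$ with continuous one-parameter subgroups of $G$. Given a drift $\phi$, the associated convolution semigroup $(\varphi_t)_{t \geqslant 0}$ consists of characters of $\Pol(G)$ which, by Gelfand duality, correspond to points $g_t \in G$ via $\varphi_t(f) = f(g_t)$. The identities $\varphi_{s+t} = \varphi_s \star \varphi_t$ and $\varphi_0 = \varepsilon$ translate into $g_{s+t} = g_s g_t$ and $g_0 = e$; convergence of the $\star$-exponential series on every matrix coefficient yields continuity of $t \mapsto g_t$, and extending by $g_{-t} = g_t^{-1}$ produces a continuous one-parameter subgroup $\alpha_\phi : \R \to G$. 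Conversely, any continuous one-parameter subgroup $\alpha$ determines the convolution semigroup of characters $f \mapsto f(\alpha(t))$, whose generator $\phi_\alpha$ is a drift.

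Finally, $\phi_\alpha$ factors through a closed subgroup $H \subseteq G_{\textup{cl}}$ precisely when $\alpha(\R) \subseteq H$: the character $\varphi_t = \textup{ev}_{\alpha(t)}$ factors through $H$ iff $\alpha(t) \in H$, and this passes to the generator $\phi_\alpha$ both by differentiation at $0$ and, in the opposite direction, by exponentiation of the drift. Hence the intersection of all closed subgroups of $G_{\textup{cl}}$ through which every drift factors is exactly the closed subgroup generated by the union of the images of all continuous one-parameter subgroups of $G_{\textup{cl}}$, which is the desired description. I do not anticipate a serious obstacle; the one technical point requiring care is to verify that the bijection between drifts on $\QG$ and drifts on $G_{\textup{cl}}$ transports ``factoring through a quantum subgroup'' correctly, which rests on the standard fact that quantum subgroups of a commutative CQG-algebra are in bijection with closed subgroups of the associated classical compact group.
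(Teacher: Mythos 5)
Your proposal is correct and follows essentially the same route as the paper: reduce to the classical version by showing drifts kill the commutator ideal (the paper notes $[a,b]=[a-\varepsilon(a),b-\varepsilon(b)]\in K_{2}$, which is equivalent to your Leibniz-rule computation), then use Remark \ref{rem:drift} and Gelfand duality to identify drifts with continuous one-parameter subgroups and match the factoring conditions. The only point both arguments leave implicit is that a continuous one-parameter subgroup of a compact group is automatically differentiable on matrix coefficients, so that the generator $\phi_\alpha$ exists; this is standard.
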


\begin{proof}
We start by observing that since $[a, b] = [a-\varepsilon(a), b - \varepsilon(b)]$, the Hopf *-ideal of commutators of $\Pol(\QG)$ is contained in $K_{2}$. Hence, any drift vanishes on it so that if $\QH$ denotes the drift part of $\QG$, then $\QH$ is contained in the quantum group obtained by quotienting $\Pol(\QG)$ with the commutators, i.e.\ in the classical version of $\QG$.

Because characters on $\Pol(G)$ correspond to elements of $G$, Remark \ref{rem:drift} implies that drifts are in one-to-one correspondence with continuous one-parameter subgroups. Now if $\phi$ is a drift and $H$ is a closed subgroup of $G$, then as soon as $\phi$ factors through the restriction map $\Pol(G)\to \Pol(H)$, the corresponding one-parameter subgroup lives in $H$. Thus, any closed subgroup through which all drifts factor must contain the closed subgroup $K$ generated by all continuous one-parameter subgroups. Since $K$ is obviously its own drift part, the proof is complete.
\end{proof}

\begin{remark}
There is one subtlety in the statement, which is that the drift part is by definition a closed subgroup. Hence, the result states that the subgroup generated by one-parameter subgroups in a connected compact group is dense, but not that it is itself equal to the whole group. Indeed, solenoids such as the Pontryagin dual of the additive group of rationals have a unique one-parameter subgroup which is not closed but which is dense.
\end{remark}

We are now left with characterizing the drift part of a classical compact group, and this is easily done using the notion of generalized Lie algebra.

\begin{proposition}\label{prop:driftclassical}
The drift part of a compact group $G$ is the connected component of the identity.
\end{proposition}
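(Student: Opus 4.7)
The plan is to combine Proposition~\ref{prop:drifts} with the classical structural fact that the exponential map of a compact connected group is surjective. Proposition~\ref{prop:drifts} already identifies the drift part of $G$ with the closure of the subgroup of $G$ generated by all continuous one-parameter subgroups $\mathbb{R}\to G$, so it suffices to prove that this closed subgroup coincides with $G^{0}$.

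One inclusion is immediate: a continuous one-parameter subgroup $X\colon\mathbb{R}\to G$ has connected image containing the identity, hence takes values in $G^{0}$. Since $G^{0}$ is closed, the closed subgroup generated by all such $X$ sits inside $G^{0}$, and therefore so does the drift part.

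For the reverse inclusion I would invoke the generalised Lie algebra $L(G^{0})$ of the compact connected group $G^{0}$, defined as the set of continuous group homomorphisms $\mathbb{R}\to G^{0}$ with its natural $\mathbb{R}$-vector space structure. The relevant classical theorem (see e.g.\ Hofmann--Morris' \emph{The Structure of Compact Groups}) states that the exponential map $\exp\colon L(G^{0})\to G^{0}$, $X\mapsto X(1)$, is surjective. Consequently every element of $G^{0}$ lies on a continuous one-parameter subgroup of $G^{0}\subset G$, and the subgroup these one-parameter subgroups generate already exhausts $G^{0}$; a fortiori its closure equals $G^{0}$, placing $G^{0}$ inside the drift part.

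The only real difficulty is the cited surjectivity statement, which is not entirely trivial for a compact connected group that is not a Lie group: one typically obtains it by writing $G^{0}$ as a projective limit of compact connected Lie quotients $G_{\alpha}$ (via Peter--Weyl), using the classical surjectivity of $\exp$ for each $G_{\alpha}$, and carefully lifting — this is where the notion of generalised Lie algebra is indispensable. Once this is taken as a black box, both inclusions are essentially one-line arguments. Note that even without surjectivity one only gets \emph{density} of the image of $\exp$ in $G^{0}$, which is still enough for our statement thanks to the closure in Proposition~\ref{prop:drifts}, as illustrated by the solenoid example in the preceding remark.
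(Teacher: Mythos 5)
Your argument follows the paper's proof essentially verbatim: both reduce the statement to Proposition~\ref{prop:drifts}, get the inclusion of the drift part in $G^{0}$ from connectedness of one-parameter subgroups, and get the reverse inclusion from the generalized Lie algebra of a compact connected group. One correction is needed, however: the ``classical structural fact'' you lead with --- surjectivity of $\exp\colon L(G^{0})\to G^{0}$ for every compact connected group --- is false. The solenoid $\widehat{\mathbb{Q}}$ from the remark preceding the proposition is a counterexample: its unique (up to rescaling) one-parameter subgroup has proper dense image, so $\exp L(G)$ is a proper subgroup; the projective-limit lifting you allude to genuinely fails because the relevant inverse limit of non-compact fibres can be empty. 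What the projective-limit argument does give is that $\exp$ has \emph{dense} image in $G^{0}$, which is exactly Lashof's theorem \cite[Thm 3.5]{lashof1957lie} invoked by the paper, and your closing sentence correctly observes that density suffices since the drift part is closed by definition. So the proof is sound once the surjectivity claim is replaced by the density statement throughout; as written, the main line of the argument rests on a false premise that only your final remark repairs.
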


\begin{proof}
We will prove the equivalent statement that $G$ equals its drift part if and only if it is connected. One way is clear: the subgroup generated by one-parameter subgroups is path-connected, hence also connected, so that its closure is connected. As for the other direction, recall that any connected compact group is a projective limit of connected compact Lie groups. As a consequence, these are LP-groups in the sense of \cite[Def 3.1]{lashof1957lie}. It then follows from \cite[Thm 3.5]{lashof1957lie} that the connected component of the identity is the closure of the range of the exponential map on the generalized Lie algebra and is therefore contained in the drift part.
\end{proof}

\begin{corollary}\label{cor:GaussClassical}
The Gaussian part of a classical compact group is the connected component of the identity.
\end{corollary}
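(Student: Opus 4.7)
The plan is to squeeze $\Gauss(G)$ between two copies of $G^{0}$, using two ingredients already available: Proposition \ref{prop:driftclassical} handles the lower bound through drifts, while Theorem A combined with Lemma \ref{lem:classicalstronglyconnected} handles the upper bound through strong connectedness.

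For the inclusion $G^{0}\subset\Gauss(G)$ I would first observe that, since $K_{3}\subset K_{2}$, the drift condition $\phi_{\mid K_{2}}=0$ is strictly stronger than the Gaussian condition $\phi_{\mid K_{3}}=0$; in particular every drift is a Gaussian generating functional. It follows from Definition \ref{def:Zpart} that any closed quantum subgroup through which all Gaussian functionals factor automatically carries every drift, so the drift part of $G$ is contained in the Gaussian part of $G$. Invoking Proposition \ref{prop:driftclassical}, which identifies the drift part of $G$ with $G^{0}$, one obtains $G^{0}\subset\Gauss(G)$.

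For the reverse inclusion $\Gauss(G)\subset G^{0}$ I would invoke Theorem A to get $\Gauss(G)\subset G^{00}$. Now since $G$ is classical, $\Pol(G)$ is commutative and so is the quotient $\Pol(G)/K_{\infty}=\Pol(G^{00})$; hence $G^{00}$ is itself a classical closed subgroup of $G$. By construction it is strongly connected, so Lemma \ref{lem:classicalstronglyconnected} applied to $G^{00}$ forces it to be connected in the usual topological sense. A connected closed subgroup of $G$ containing the identity must lie in $G^{0}$, yielding $G^{00}\subset G^{0}$.

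Combining both inclusions gives the chain $G^{0}\subset\Gauss(G)\subset G^{00}\subset G^{0}$, which forces all of them to be equalities, as desired. There is no real obstacle here since the hard content is already packaged into Proposition \ref{prop:driftclassical}, Theorem A and Lemma \ref{lem:classicalstronglyconnected}; the only thing to notice is the elementary but crucial fact that drifts form a subclass of Gaussian generating functionals, which lets the drift part serve as a lower bound for the Gaussian part.
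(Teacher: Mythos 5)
Your argument is correct and follows essentially the same route as the paper: the lower bound $G^{0}\subset\Gauss(G)$ comes from the fact that drifts are Gaussian together with Proposition \ref{prop:driftclassical}, and the upper bound comes from the inclusion $\Gauss(G)\subset G^{00}$ of Theorem A combined with Lemma \ref{lem:classicalstronglyconnected}. The paper phrases the converse direction as ``Gaussian implies strongly connected implies connected'' rather than passing explicitly through $G^{00}$, but the ingredients are identical.
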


\begin{proof}
It follows from Proposition \ref{prop:driftclassical} that if $G$ is connected, then it equals its drift part, which is itself contained in the Gaussian part, so that any connected compact group is Gaussian. Conversely, if $G$ is Gaussian then it is strongly connected, hence connected by Lemma \ref{lem:classicalstronglyconnected}.
\end{proof}

\subsection{Duals of discrete groups}

The next case to consider is that of duals of discrete groups. Let us recall that for a group $\Gamma$, one defines its canonical lower central series by setting $\gamma_{1}(\Gamma) = \Gamma$ and for any $n \in \mathbb{N}$ putting $\gamma_{n+1}(\Gamma)$ to be the subgroup generated by $[\gamma_{n}(\Gamma), \Gamma]$. Moreover, for a subgroup $\Lambda\subset \Gamma$, we write $\sqrt{\Lambda}$ for the subgroup of all elements $\gamma\in \Gamma$ such that there exists  $n\in \mathbb{N}$ satisfying $\gamma^{n}\in \Lambda$.

Before proving the main result of this section, we need to clarify the connection between group commutators and the structure of the group algebra.

\begin{lemma}\label{lem:groupalgebracommutators}
Let $\Gamma$ be a discrete group and let $g, h, k\in \Gamma$. Then,
\begin{equation*}
[[g, h], k] - 1\in K_{3}(\C[\Gamma]).
\end{equation*}
\end{lemma}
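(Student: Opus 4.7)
The plan rests on two basic observations. First, the ideals $K_{n}(\C[\Gamma]) = K_{1}^{n}$ are in fact two-sided ideals of $\C[\Gamma]$, because $K_{1} = \ker\varepsilon$ is a two-sided ideal and powers of two-sided ideals are two-sided. Second, for any two group elements $a, b \in \Gamma$, a direct expansion gives the commutator identity
\[ ab - ba = (a-1)(b-1) - (b-1)(a-1), \]
from which $ab - ba \in K_{2}(\C[\Gamma])$ follows at once.

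Using these ingredients, I would first establish the intermediate claim that $[g,h] - 1 \in K_{2}$ for all $g, h \in \Gamma$. The key rewriting is
\[ [g,h] - 1 = g^{-1}h^{-1}gh - g^{-1}h^{-1}hg = g^{-1}h^{-1}(gh - hg), \]
whose right-hand side lies in $\C[\Gamma]\cdot K_{2} = K_{2}$ by the first observation applied to $a = g$, $b = h$.

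The main step is then to bootstrap from $K_{2}$ to $K_{3}$. Setting $c = [g,h]$, so that $c - 1 \in K_{2}$, and applying the same identity with $a = c$, $b = k$, we obtain
\[ ck - kc = (c-1)(k-1) - (k-1)(c-1) \in K_{2}K_{1} + K_{1}K_{2} \subset K_{3}. \]
The identity above (used once more, now with $c$ in place of $g$) then gives
\[ [[g,h], k] - 1 = [c, k] - 1 = c^{-1}k^{-1}(ck - kc), \]
which sits in $\C[\Gamma]\cdot K_{3} = K_{3}$, as required.

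There is no serious obstacle here: the argument is just a short group-algebra manipulation, and the only point of care is to exploit that the $K_{n}$ are two-sided ideals, so that outside multiplication by $g^{-1}h^{-1}$ or $c^{-1}k^{-1}$ preserves the filtration level. Conceptually, this is the $n = 3$ case of the classical fact that the lower central series $\gamma_{n}(\Gamma)$ embeds via $g \mapsto g-1$ into the augmentation powers $K_{n}(\C[\Gamma])$, proved inductively by exactly the same bootstrapping; only this particular instance is needed here.
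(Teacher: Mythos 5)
Your argument is correct and is essentially the paper's proof: both rest on writing the group commutator minus $1$ as a unit multiple of an algebra commutator and on the centering identity $ab-ba=(a-1)(b-1)-(b-1)(a-1)$ (the paper phrases it as $[a,b]=[a-\varepsilon(a),b-\varepsilon(b)]$), which drops algebra commutators one level deeper into the filtration. The only differences are cosmetic: a different commutator convention and a slightly more step-by-step bootstrapping.
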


\begin{proof}
For clarity, we will denote by $[\cdot, \cdot]_{\Gamma}$ the group commutator and by $[\cdot, \cdot]_{\C[\Gamma]}$ the algebra commutator. Start by noticing that for $g, h\in \Gamma$,
\begin{equation*}
[g, h]_{\Gamma} - 1 = [g, h]_{\C[\Gamma]}g^{-1}h^{-1}.
\end{equation*}
It then follows that
\begin{align*}
[[g, h], k]_{\Gamma} - 1 & = [[g, h]_{\Gamma}, k]_{\C[\Gamma]}[g, h]_{\Gamma}^{-1}k^{-1} \\
& = [[g, h]_{\C[\Gamma]}g^{-1}h^{-1}, k]_{\C[\Gamma]}[g, h]_{\Gamma}^{-1}k^{-1}.
\end{align*}
Now observe that for any $a, b\in \C[\Gamma]$,
\begin{equation*}
[a, b]_{\C[\Gamma]} = [a-\varepsilon(a), b-\varepsilon(b)]_{\C[\Gamma]}
\end{equation*}
so that in particular all algebra commutators are in $K_{2}$ and if $a\in K_{2}$, then the commutator is in $K_{3}$. The result then follows.
\end{proof}

We are now ready for the characterization of Gaussianity for duals of discrete groups. Recall that a group is said to be \emph{nilpotent of class $2$} if all its commutators are central.

\begin{theorem}\label{dual:Gaussian}
Let $\Gamma$ be a finitely generated discrete group. Then,
\begin{equation*}
\Gauss(\widehat{\Gamma}) = \widehat{\Gamma/\sqrt{\gamma_{3}(\Gamma)}}.
\end{equation*}
In particular, $\widehat{\Gamma}$ is Gaussian if and only if $\Gamma$ is torsion-free nilpotent of class $2$.
\end{theorem}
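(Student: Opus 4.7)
Set $\bar\Gamma = \Gamma/\sqrt{\gamma_3(\Gamma)}$; this is finitely generated, torsion-free, and nilpotent of class at most $2$. The plan is to prove the two inclusions $\Gauss(\widehat{\Gamma}) \subseteq \widehat{\bar\Gamma}$ and $\Gauss(\widehat{\Gamma}) \supseteq \widehat{\bar\Gamma}$ separately; by Corollary \ref{cor:Gausssubgroups}, the latter reduces to showing $\Gauss(\widehat{\bar\Gamma}) = \widehat{\bar\Gamma}$.

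For the first inclusion, I show that every Gaussian functional $\phi$ on $\mathbb{C}[\Gamma]$, with associated cocycle $\eta$, vanishes on the Hopf $^*$-ideal $\mathcal{I}$ generated by $\{g - 1 : g \in \sqrt{\gamma_3(\Gamma)}\}$. Since $\eta$ is a derivation and $\varepsilon \equiv 1$ on $\Gamma$, the restriction $\eta|_\Gamma$ is a group homomorphism into the torsion-free additive group of $H$, so $\eta$ kills $\gamma_2(\Gamma)$ and a fortiori $\gamma_3(\Gamma)$. By Lemma \ref{lem:groupalgebracommutators}, $\phi([[g,h],k]) = 0$ for all $g,h,k \in \Gamma$; and the cobound formula $\phi(xy) = \phi(x) + \phi(y) - \langle\eta(x),\eta(y)\rangle$ reduces to additivity on $\gamma_3(\Gamma)$, so $\phi|_{\gamma_3(\Gamma)} = 0$. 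For $g \in \sqrt{\gamma_3(\Gamma)}$ with $g^n \in \gamma_3(\Gamma)$, the identities $\eta(g^n) = n\eta(g)$ (together with the torsion-freeness of $H$) and $\phi(g^n) = n\phi(g) - \binom{n}{2}\|\eta(g)\|^2$ force $\eta(g) = 0$ and $\phi(g) = 0$. A direct application of Corollary \ref{cor:wickformula} to $X = \Gamma$ and $Y = \{g - 1 : g \in \sqrt{\gamma_3(\Gamma)}\}$ extends this vanishing to the full ideal $\mathcal{I}$.

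For the converse inclusion, I construct for each $1 \neq g \in \bar\Gamma$ a Gaussian on $\mathbb{C}[\bar\Gamma]$ with $\phi(g) \neq 0$. Any group homomorphism $\eta: \bar\Gamma \to \mathbb{C}^k$ (necessarily factoring through $\bar\Gamma^{ab}$) produces a Gaussian $\phi = \phi_1 + i\phi_2$ with $\phi_1(g) = -\tfrac{1}{2}\|\eta(g)\|^2$ and $\phi_2([a,b]) = -2\,\textup{Im}\langle\eta(a),\eta(b)\rangle$ on commutators; the $2$-step nilpotent structure of $\bar\Gamma$ guarantees that this assignment extends consistently to a generating functional on $\mathbb{C}[\bar\Gamma]$ (e.g., via a Malcev basis description, or by restricting a Gaussian convolution semigroup on the Mal'cev completion of $\bar\Gamma$). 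If $g \notin \sqrt{\gamma_2(\bar\Gamma)}$, then its image in the free part $\mathbb{Z}^p$ of $\bar\Gamma^{ab}$ is non-zero, so a real homomorphism $\eta: \bar\Gamma \to \mathbb{R}$ with $\eta(g) \neq 0$ yields $\phi(g) = -\tfrac{1}{2}\eta(g)^2 \neq 0$. If $g \in \sqrt{\gamma_2(\bar\Gamma)} \setminus \{1\}$, then $g^N \in \gamma_2(\bar\Gamma) \setminus \{1\}$ for some $N \geq 1$ by torsion-freeness; since $\gamma_2(\bar\Gamma)$ is finitely generated free abelian, pick a homomorphism $\chi: \gamma_2(\bar\Gamma) \to \mathbb{R}$ with $\chi(g^N) \neq 0$. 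The map $(x,y) \mapsto \chi([x,y])$ descends to a skew $\mathbb{R}$-valued form on $\bar\Gamma^{ab}$ which vanishes on its torsion (torsion elements of $\bar\Gamma^{ab}$ lift to central elements of $\bar\Gamma$, whose commutators are trivial); such a form on the free part $\mathbb{Z}^p$ can be diagonalized into rank-$2$ symplectic blocks, each realizable as $-2\,\textup{Im}$ of a $\mathbb{C}$-valued component of an $\eta: \bar\Gamma^{ab} \to \mathbb{C}^m$. The associated Gaussian satisfies $\phi(g^N) = i\chi(g^N) \neq 0$, and the relation $\phi(g^N) = N\phi(g)$ (which uses $\eta(g) = 0$) yields $\phi(g) \neq 0$.

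The main obstacle is the existence step above: ensuring that, for each prescribed cocycle $\eta$, the corresponding Gaussian functional $\phi$ genuinely exists on $\mathbb{C}[\bar\Gamma]$. This amounts to checking that the $2$-cocycle $\textup{Im}\langle\eta(\cdot),\eta(\cdot)\rangle$ on $\bar\Gamma$ is a coboundary, which is guaranteed by the torsion-free $2$-step nilpotent structure (it would fail, for instance, in the presence of $\gamma_3$-torsion or of genuinely $3$-step commutators). The "in particular" statement of the theorem is then immediate: $\widehat{\Gamma}$ is Gaussian iff $\sqrt{\gamma_3(\Gamma)} = \{1\}$, iff $\Gamma$ is torsion-free nilpotent of class at most $2$.
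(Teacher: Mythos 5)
Your forward inclusion is correct and in fact streamlines the paper's argument: the paper first shows that $\C[\gamma_3(\Gamma)]-1\subseteq K_3$ forces all Gaussian functionals to kill $\gamma_3(\Gamma)$, and then disposes of torsion separately via the projections $\frac{1}{n}\sum_k g^k$ and strong connectedness, whereas you handle all of $\sqrt{\gamma_{3}(\Gamma)}$ at once by noting that a Gaussian cocycle restricts to a group homomorphism $\Gamma\to (H,+)$ and that $\phi$ is additive wherever $\eta$ vanishes. One small point: Lemma \ref{lem:groupalgebracommutators} as stated only covers $[[g,h],k]$, so to conclude $\phi|_{\gamma_{3}(\Gamma)}=0$ you should either note that the lemma's proof actually gives $[c,k]-1\in K_{3}$ for every $c\in\gamma_{2}(\Gamma)$, or use the additivity of $\phi$ on $\gamma_2(\Gamma)$ to reduce to a generating set.

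The converse is where you diverge from the paper, and where there is a genuine gap. Your assertion that \emph{any} homomorphism $\eta:\bar\Gamma\to\C^{k}$ produces a Gaussian $\phi$ with $\phi(gh)=\phi(g)+\phi(h)-\langle\eta(g),\eta(h)\rangle$ is false, and the stated justification (``guaranteed by the torsion-free $2$-step nilpotent structure'') cannot be right: take $\bar\Gamma=\Z^{2}$ and $\eta(m,n)=m+in$; then $\phi(gh)-\phi(hg)$ would have to equal $-2i\,\mathrm{Im}\langle\eta(g),\eta(h)\rangle\neq 0$ although $gh=hg$. The correct statement is that the $2$-cocycle $\mathrm{Im}\langle\eta(\cdot),\eta(\cdot)\rangle$ must be a coboundary on $\bar\Gamma$, and this holds for the \emph{particular} $\eta$ you construct in your second case precisely because there $-2\,\mathrm{Im}\langle\eta(\cdot),\eta(\cdot)\rangle=\chi([\cdot,\cdot])$ is the image under $\chi$ of the extension class of $1\to\gamma_{2}(\bar\Gamma)\to\bar\Gamma\to\bar\Gamma^{\mathrm{ab}}\to 1$, whose pullback to $\bar\Gamma$ dies. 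That verification is the heart of the matter and you leave it unproved. The paper avoids the issue entirely by a different reduction: since $\bar\Gamma/Z(\bar\Gamma)$ is finitely generated abelian and torsion-free (Mal'cev), its dual is a torus, hence Gaussian by the classical case, so Corollary \ref{cor:Gausssubgroups} already forces $\Gauss(\widehat{\bar\Gamma})=\widehat{\bar\Gamma/\Theta}$ with $\Theta\leq Z(\bar\Gamma)$; it then only remains to separate points of the \emph{center} by Gaussians, which is done by taking a quadratic form on $Z(\bar\Gamma)\cong\Z^{m}$ and extending it by zero off the center --- no cocycle with nontrivial imaginary part, and no cohomological argument, is needed. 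I recommend either adopting that reduction or writing out the coboundary computation (e.g.\ on a Mal'cev basis) for your specific $\eta$.
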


\begin{proof}
Assume first that $\widehat{\Gamma}$ is Gaussian. The subgroup $\gamma_{3}(\Gamma)$ being normal, the set $\C[\gamma_{3}(\Gamma)] - 1$ is a Hopf *-ideal in $\C[\Gamma]$. Because it is contained in $K_{3}$ by Lemma \ref{lem:groupalgebracommutators}, all Gaussian functionals vanish on it by definition so that they factor through $\widehat{\Gamma/\gamma_{3}(\Gamma)}$. Thus, $\gamma_{3}(\Gamma)$ must be trivial so that $\Gamma$ is nilpotent of class $2$. Let furthermore $g\in \Gamma\backslash\{e\}$ and $n \in \mathbb{N}$ be such that $g^n=e$ Then,
\begin{equation*}
p_{g} = \frac{1}{n}\sum_{k=0}^{n-1}g^{k}
\end{equation*}
is a non-trivial projection in $\C[\Gamma]$, hence $\widehat{\Gamma}$ is not strongly connected by Lemma \ref{lem:projectionsnotconnected}, contradicting Gaussianity. Therefore, we have proven that $\Gamma$ is torsion-free nilpotent of class $2$.

This means that for an arbitrary discrete group $\Gamma$, the Gaussian part of $\widehat{\Gamma}$ is of the form $\widehat{\Lambda}$ with $\Lambda$ a torsion-free nilpotent of class $2$ quotient of $\Gamma$. There is one such quotient which is maximal in the sense that all the other ones factor through it, namely $\Gamma/\sqrt{\gamma_{3}(\Gamma)}$. To conclude it is enough to prove that the duals of all such groups are Gaussian.
To do this, note first that $\Lambda/Z(\Lambda)$ is an abelian finitely generated  group. By \cite{Malcev}, $\Lambda/Z(\Lambda)$ is also torsion free, thus 
\begin{equation*}
\widehat{\Lambda/Z(\Lambda)} \cong \T^{n}
\end{equation*}
for some $n\in \mathbb{N}_{0}$ and $\widehat{\Lambda/Z(\Lambda)}$ is Gaussian because it is a connected compact group. By Corollary \ref{cor:Gausssubgroups}, this means that $\Gauss(\widehat{\Lambda})$ contains $\widehat{\Lambda/Z(\Lambda)}$, or in other words that $\Gauss(\widehat{\Lambda}) = \widehat{\Lambda/\Theta}$ where $\Theta < Z(\Lambda)$. 

It remains then to show that for every $\gamma_{0}\in Z(\Lambda)\setminus \{e\}$ there is a Gaussian generating functional on $\mathbb{C}[\Lambda]$ -- i.e.\ conditionally positive-definite function $\phi$ on $\Lambda$ of Gaussian type  -- such that $\phi(\gamma_{0})\neq 0$. But once again we have $Z(\Lambda)\cong \mathbb{Z}^{m}$ for some $m\in \N_{0}$ and using the usual Laplacian on $\T^{m}$ we first get a function $\phi_{0} : Z(\Lambda)\to \C$ as above by setting
\begin{equation*}
\phi_{0}(k_{1}, \cdots, k_{m}) = \sum_{i=1}^m k_{i}^{2},
\end{equation*}
and then extend it to $\Lambda$ as follows:
\begin{equation*}
\phi(\gamma) = \begin{cases}\phi_{0}(\gamma) & \gamma\in Z(\Lambda) \\ 
0 & \gamma \notin Z(\Lambda)\end{cases}.
\end{equation*}
An elementary check shows that $\phi$ is a conditionally positive-definite function of Gaussian type.
\end{proof}

\begin{remark}
In the case of group algebras, it turns out that $K_{3}$ is a Hopf ideal, and it follows from \cite[Thm IV.1.5]{passi1979group} that it equals $\C[\sqrt{\gamma_{3}(\Gamma)}] - 1$. We could have used this (involved) result to prove directly that a discrete group dual is Gaussian only if it is torsion-free nilpotent of class 2. We have chosen however to give a direct and self-contained proof which is furthermore completely elementary.
\end{remark}

Let us conclude this section with a word on free wreath products of a discrete group by the quantum permutation group $S_{N}^{+}$. We refer the reader to \cite{bichon2004free} for the definition of these objects, whose Gaussian part can be easily expressed in terms of the building discrete group.

\begin{proposition}
Let $N\in \mathbb{N}$ and let $\Gamma$ be a discrete group. Then
\begin{equation*}
\Gauss(\widehat{\Gamma}\wr_{\ast} S_{N}^{+}) = \Gauss(\widehat{\Gamma}^{\ast N}).
\end{equation*}
\end{proposition}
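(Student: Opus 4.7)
The plan is to work with the canonical surjection $\pi : \Pol(\widehat{\Gamma}\wr_{\ast} S_{N}^{+}) \to \Pol(\widehat{\Gamma}^{\ast N})$ sending the magic unitary $(u_{ij})$ to the identity matrix while fixing the generators of the $N$ free copies of $\C[\Gamma]$. This map is well defined since, once the $u_{ij}$ are replaced by scalars, Bichon's defining commutation relations for the free wreath product become trivial; in particular, $\widehat{\Gamma}^{\ast N}$ is a quantum subgroup of $\widehat{\Gamma}\wr_{\ast} S_{N}^{+}$. By the second half of Corollary \ref{cor:Gausssubgroups}, it then suffices to prove the single inclusion $\Gauss(\widehat{\Gamma}\wr_{\ast} S_{N}^{+}) \subset \widehat{\Gamma}^{\ast N}$, that is, that every Gaussian generating functional $\phi$ on $\Pol(\widehat{\Gamma}\wr_{\ast} S_{N}^{+})$ vanishes on $\ker(\pi)$.

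To this end, observe that the $u_{ij}$ are projections with $\varepsilon(u_{ij}) = \delta_{ij}$, so Lemma \ref{lem:projectionsnotconnected} yields $u_{ij} - \delta_{ij}\mathbf{1} \in K_{\infty}$ for every $i,j$. Moreover $K_{\infty}$ is a Hopf $\ast$-ideal: it is a Hopf ideal by Proposition \ref{prop:hopfideal}, and self-adjoint because each $K_{n} = K_{1}^{n}$ inherits $\ast$-stability from $K_{1} = \ker(\varepsilon)$. Consequently the Hopf $\ast$-ideal generated by $\{u_{ij} - \delta_{ij}\mathbf{1}\}_{i,j}$ sits inside $K_{\infty} \subset K_{3}$, and it is therefore annihilated by any Gaussian $\phi$ since $\phi_{\mid K_{3}} = 0$ by definition.

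The final step, and the only point where the specific structure of the free wreath product really enters, is to match $\ker(\pi)$ with this Hopf $\ast$-ideal. I expect this presentation check to be the main (albeit mild) obstacle: it amounts to unwinding Bichon's presentation in \cite{bichon2004free}, noting that the only relations distinguishing $\Pol(\widehat{\Gamma}\wr_{\ast} S_{N}^{+})$ from the free product $\C[\Gamma]^{\ast N} \ast \Pol(S_{N}^{+})$ are the commutations tying each row of $(u_{ij})$ to the corresponding copy of $\C[\Gamma]$, and these become vacuous once the $u_{ij}$ are collapsed to scalars, leaving precisely $\C[\Gamma]^{\ast N} = \Pol(\widehat{\Gamma}^{\ast N})$. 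All remaining ingredients are already in place from Sections \ref{sec:Gaussian} and \ref{sec:stronglyconnected}.
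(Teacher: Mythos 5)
Your argument is correct and follows essentially the same route as the paper: identify $\widehat{\Gamma}^{\ast N}$ as the quotient of the free wreath product by a Hopf $\ast$-ideal contained in $K_{\infty}\subset K_{3}$, so that every Gaussian generating functional annihilates it, and conclude via Corollary \ref{cor:Gausssubgroups}. The only (harmless) difference is the choice of generators pushed into $K_{\infty}$: the paper bootstraps the off-diagonal coefficients $u(g)_{ij}$, $i\neq j$, of the fundamental representations using the relation $u_{ij}(g)u_{ik}(h)=\delta_{jk}u_{ij}(gh)$, whereas you apply Lemma \ref{lem:projectionsnotconnected} directly to the projections $u_{ij}$ of the magic unitary; both families generate the same Hopf $\ast$-ideal, namely the kernel of the surjection onto $\C[\Gamma]^{\ast N}$.
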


\begin{proof}
Recall that we have a generating family of $N$-dimensional representations $\{u(g):g \in \Gamma\}$ whose coefficients satisfy in particular the relations
\begin{equation*}
u_{ij}(g)u_{ik}(h) = \delta_{jk}u_{ij}(gh), 
\end{equation*}
and
\begin{equation*}
\varepsilon(u(g)_{ij}) = \delta_{ij}
\end{equation*}
for all $g, h\in \Gamma$ and all $1\leqslant i,j\leqslant N$. It follows that for any $g \in \Gamma$ and $i,j =1,\ldots, n$, $i\neq j$,
\begin{equation*}
u_{ij}(g) = u_{ij}(g)(u_{ii}(g) - 1)\in K_{2}
\end{equation*}
and by induction, all the off-diagonal coefficients are in $K_{\infty}$. As a consequence, the Gaussian part factors through the diagonal quantum subgroup, which is the dual of $\Gamma^{\ast N}$.
\end{proof}

In particular, the Gaussian part of $H_{N}^{\infty +}$ is the free residually ``torsion-free nilpotent'' group of rank $N$.

\section{Examples II} \label{sec:ExamplesII}

In this, last section we describe Gaussian parts of $q$-deformations, and discuss the case of free quantum groups.

\subsection{$q$-deformations}

Consider a simply connected semisimple compact Lie group $G$ equipped with a standard Poisson structure and the deformation parameter $q\in (0,1)$. The quantisation procedure due to Korogodski and Soibelman \cite{ks98}, see also \cite{nt12}, leads to a compact quantum group $\QG_q$. The procedure is compatible with the deformation of Poisson subgroups of $G$; in particular the maximal torus, i.e.\ a maximal abelian connected subgroup of $G$, which is unique up to a  conjugation, remains (a classical) subgroup of $\QG_q$.

\begin{proposition}\label{prop:q-deform}
For a simply connected semisimple compact Lie group $G$ and $q\in (0,1)$ the Gaussian part of the quantum group $\QG_q$ is the maximal torus $\mathbf{T}\subset  {\QG_q}$.
\end{proposition}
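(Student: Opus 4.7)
The equality $\Gauss(\QG_q)=\mathbf{T}$ breaks into two inclusions, of which the first is almost immediate: $\mathbf{T}$ is a connected compact classical group, so Corollary \ref{cor:GaussClassical} gives $\Gauss(\mathbf{T})=\mathbf{T}$, and as $\mathbf{T}$ is a quantum subgroup of $\QG_q$, Corollary \ref{cor:Gausssubgroups} yields $\mathbf{T}=\Gauss(\mathbf{T})\subseteq \Gauss(\QG_q)$.

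For the reverse inclusion, my plan is to go through Theorem \ref{Theorem_A} by proving the stronger statement $\QG_q^{00}=\mathbf{T}$. This amounts to the identity $K_\infty(\Pol(\QG_q))=\mathcal{I}_{\mathbf{T}}$ of Hopf $*$-ideals. The inclusion $K_\infty\subseteq \mathcal{I}_{\mathbf{T}}$ is direct: since $\mathbf{T}$ is classical connected, Lemma \ref{lem:classicalstronglyconnected} gives $K_\infty(\Pol(\mathbf{T}))=\{0\}$, and the Hopf $*$-morphism $q_{\mathbf{T}}$ maps $K_\infty(\Pol(\QG_q))$ into $K_\infty(\Pol(\mathbf{T}))$.

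The technical core is then $\mathcal{I}_{\mathbf{T}}\subseteq K_\infty$. First, exploiting the bi-$\mathbf{T}$ grading of $\Pol(\QG_q)$ by left and right torus weights together with the fact that for $q\neq 1$ the abelianization of $\Pol(\QG_q)$ collapses to $\Pol(\mathbf{T})$ (the $q$-commutations force every fundamental off-diagonal to vanish in the commutative quotient, after which the unitarity and quantum determinant relations reduce what remains to the group algebra of the weight lattice), one sees that $\mathcal{I}_{\mathbf{T}}$ is generated as a Hopf $*$-ideal by the off-diagonal matrix coefficients $u^V_{ij}$ (in a fixed weight basis) of the fundamental representations $V$ of $\QG_q$. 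It therefore suffices to show that each fundamental off-diagonal $u^V_{ij}$ with $\mu_i\neq \mu_j$ lies in $K_\infty$. For this, the defining $q$-commutation in $\Pol(\QG_q)$ supplies an identity of the shape $u^V_{ii}u^V_{ij}=q^{-d}\,u^V_{ij}u^V_{ii}$ with an integer $d\neq 0$; rewritten as
\begin{equation*}
(1-q^{-d})\,u^V_{ij}=q^{-d}\,u^V_{ij}(u^V_{ii}-1)-(u^V_{ii}-1)\,u^V_{ij},
\end{equation*}
and using $u^V_{ii}-1\in K_1$ together with $u^V_{ij}\in K_1$, a routine induction on $n$ yields $u^V_{ij}\in K_n\Rightarrow u^V_{ij}\in K_{n+1}$ (the right-hand side lies in $K_1\cdot K_n+K_n\cdot K_1\subseteq K_{n+1}$), hence $u^V_{ij}\in K_\infty$. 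This is exactly the mechanism used earlier to compute the strongly connected component of $O_2^+\cong SU_{-1}(2)$.

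The main obstacle is the uniform verification, across all simply connected semisimple types, of the required clean $q$-commutation identity and of the identification of the abelianization of $\Pol(\QG_q)$ with $\Pol(\mathbf{T})$. For $SU_q(n)$ the standard RTT presentation makes both points immediate, but for the remaining Dynkin types one has to use the Drinfeld--Jimbo presentation and carry out a careful bookkeeping of the weight-lattice bilinear form and the resulting bi-grading on matrix coefficients of fundamental representations.
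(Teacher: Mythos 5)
Your first inclusion $\mathbf{T}\subseteq\Gauss(\QG_q)$ is correct and coincides with the easy half of the paper's argument. For the reverse inclusion your route diverges substantially: the paper does not compute $\QG_q^{00}$ at the level of generators and relations, but simply invokes the containment $\Gauss(\QG_q)\subseteq\textup{Kac}(\QG_q)$ from Theorem \ref{Theorem_A} (or Theorem \ref{prop-gauss-kac}) together with Lemma 4.10 of \cite{tom07}, which identifies the maximal Kac quotient of $\QG_q$ with the maximal torus; since $\mathbf{T}$ is connected, this finishes the proof in one line. Note that this also delivers your stronger claim $\QG_q^{00}=\mathbf{T}$ as a by-product of Theorem \ref{Theorem_A}, with no $R$-matrix bookkeeping.

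The gap in your version is exactly the step you yourself flag as ``the main obstacle'': the clean exchange relation $u^V_{ii}u^V_{ij}=q^{-d}u^V_{ij}u^V_{ii}$ with $d\neq0$. This holds in type $A$ via the RTT presentation (and for $SU_{-1}(2)\cong O_2^+$, which is why the analogous computation in the paper's lemma on $O_2^+$ goes through), but for types $B$, $C$ and $D$ the $R$-matrix of the vector representation contains an additional term coming from the invariant bilinear form, which couples distinct pairs of matrix units; the exchange relation between $u^V_{ii}$ and $u^V_{ij}$ then acquires correction terms involving other matrix coefficients, so the identity as you state it is false there, and for the exceptional types no presentation of this shape is even available. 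One would have to check that the correction terms still lie in $K_1\cdot K_n+K_n\cdot K_1$, which is plausible but is precisely the ``careful bookkeeping'' you defer rather than carry out. A secondary soft spot: you justify that $\mathcal{I}_{\mathbf{T}}$ is generated by the off-diagonal fundamental coefficients by appealing to the abelianization of $\Pol(\QG_q)$, but the quotient by the off-diagonal ideal is the maximal cocommutative (group-algebra) quotient, not the maximal commutative one, so the abelianization statement does not by itself yield the claim; one still needs the representation-theoretic identification of that group algebra with $\C[P]$. Since the Kac-part argument bypasses all of this, I would replace the direct computation by the appeal to Theorem \ref{prop-gauss-kac} combined with Tomatsu's lemma.
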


\begin{proof}
Lemma 4.10 in \cite{tom07}, based on the knowledge of the representation theory of $\Pol(\QG_q)$, shows that the maximal torus $\mathbf{T} $ coincides with the Kac part of $\QG_q$. As $\mathbf{T}$ is by definition connected, the proof is complete.
\end{proof}

\subsection{Deformations of the orthogonal group}

We will now consider the \emph{half-liberated quantum orthogonal group} $O_{N}^{*}$ introduced in \cite{banica2009liberation}. Recall that $\Pol(O_{N}^{*})$ is defined to be the quotient of $\Pol(O_{N}^{+})$ by the relations $abc = cba$ for all $a, b, c\in\{u_{jk} \mid 1\leqslant j, k\leqslant N\}$. We will prove that its Gaussian part is that of the classical group $O_{N}$. This requires first characterizing Gaussian functionals on $O_{N}^{+}$ lying in its classical part. More precisely, any Gaussian functional on $O_{N}^{*}$ yields a Gaussian functional on $O_{N}^{+}$ by composition with the quotient map, and it is easy to determine when such a Gaussian functional in fact factors through the abelianization.

\begin{lemma}\label{lem:gaussianOn}
Let $N \in \mathbb{N}$.
A Gaussian triple on $O_{N}^{+}$ factors through $O_{N}$ if and only if the corresponding cocycle $\eta$ satisfies
\begin{equation*}
\langle\eta(a),\eta(b)\rangle\in\R \qquad \forall a, b\in\{u_{jk} \mid 1\leqslant j, k\leqslant N\}.
\end{equation*}
\end{lemma}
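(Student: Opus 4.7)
The plan is to recognize the lemma as a direct application of Remark \ref{remclass}, where the crucial additional input is that the canonical generators $u_{jk}$ of $\Pol(O_N^+)$ are self-adjoint.

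First I would reduce the statement about Sch\"urmann triples to one about generating functionals. Since the Sch\"urmann triple associated with a generating functional is unique up to equivalence (and since the discussion after Definition \ref{def:Zpart} says that if $\phi$ factors through a subgroup, then so does its whole Sch\"urmann triple), the triple factors through $O_N$ if and only if $\phi$ does. Now $\Pol(O_N)$ is by definition the quotient of $\Pol(O_N^+)$ by the commutator ideal, so $\phi$ factors through $O_N$ precisely when $\phi$ vanishes on that ideal, i.e.\ when condition (iv) of Remark \ref{remclass} holds.

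Next, I would apply Remark \ref{remclass} with $X = \{u_{jk} \mid 1 \leqslant j, k \leqslant N\}$, which is a $*$-algebra generating set for $\Pol(O_N^+)$. The equivalence (ii) $\Leftrightarrow$ (iv) in the remark then translates the conclusion of the previous paragraph into the condition
\begin{equation*}
\langle \eta(x^{*}), \eta(y) \rangle = \langle \eta(y^{*}), \eta(x) \rangle \qquad \forall x, y \in X.
\end{equation*}

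Finally I would use self-adjointness: since $u_{jk}^{*} = u_{jk}$ for every $j, k$, the above simplifies to $\langle \eta(a), \eta(b) \rangle = \langle \eta(b), \eta(a) \rangle$ for all $a, b \in X$, which by conjugate-symmetry of the inner product is the same as $\langle \eta(a), \eta(b) \rangle \in \R$. No real obstacle arises; the entire argument is a two-step unpacking once Remark \ref{remclass} and the self-adjointness of the generators are brought into play.
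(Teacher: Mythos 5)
Your proposal is correct and follows essentially the same route as the paper, which likewise deduces the lemma from the equivalence of (ii) and (iv) in Remark \ref{remclass}; your additional unpacking of the self-adjointness of the $u_{jk}$ (turning the symmetry condition into reality of the inner products) and of the identification of $\Pol(O_N)$ with the quotient by the commutator ideal is exactly what the paper leaves implicit.
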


\begin{proof}
This follows immediately from the equivalence of (ii) and (iv) in Remark \ref{remclass}.
\end{proof}

We can now elucidate the Gaussian part of $O_{N}^{*}$.

\begin{proposition} \label{prop:halfliberated}
	Let $N \in \mathbb{N}$.
The Gaussian part of $O_{N}^{*}$ is $SO_{N}$.
\end{proposition}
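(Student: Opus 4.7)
The plan is to establish the two inclusions $SO_N \subseteq \Gauss(O_N^*)$ and $\Gauss(O_N^*) \subseteq SO_N$ separately. For the first inclusion, $SO_N$ is a classical compact connected group and is therefore Gaussian by Corollary \ref{cor:GaussClassical}. Since $SO_N$ is a quantum subgroup of $O_N^*$, via the composition $\Pol(O_N^*)\twoheadrightarrow\Pol(O_N)\twoheadrightarrow\Pol(SO_N)$ of the canonical abelianization map with the restriction to $SO_N$, Corollary \ref{cor:Gausssubgroups} immediately yields $SO_N=\Gauss(SO_N)\subseteq\Gauss(O_N^*)$.

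For the reverse inclusion, I would fix a Gaussian generating functional $\phi$ on $O_N^*$ and aim to prove it factors through $SO_N$. The strategy is to first show that $\phi$ is tracial on the generators, i.e.\ $\phi(u_{ij}u_{kl})=\phi(u_{kl}u_{ij})$ for all $i,j,k,l$. By Remark \ref{remclass} (applied with $X=\{u_{ij}\}$), this would imply that $\phi$ factors through the commutator ideal of $\Pol(O_N^*)$, hence through the abelianization $\Pol(O_N)$. The induced Gaussian functional on $\Pol(O_N)$ would then automatically factor through $\Gauss(O_N)=SO_N$ by Corollary \ref{cor:GaussClassical}, finishing the argument.

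The key technical step is the traciality on generators, which I would extract from the half-liberation relation $u_{ij}u_{kl}u_{mn}=u_{mn}u_{kl}u_{ij}$. Applying the Wick-type formula (Proposition \ref{prop:wick}) to both sides and using that $\varepsilon(u_{ij})=\delta_{ij}$ (so that the single-factor contributions of the two expansions cancel pairwise), I expect to obtain the identity
\[
f(ij,kl)\,\delta_{mn}+f(ij,mn)\,\delta_{kl}+f(kl,mn)\,\delta_{ij}=0 \qquad \forall\, i,j,k,l,m,n,
\]
where $f(ij,kl):=\phi(u_{ij}u_{kl})-\phi(u_{kl}u_{ij})$. Specializing to $k=l=m=n=s$ and using $f(ss,ss)=0$ gives $2f(ij,ss)=0$, hence $f(ij,ss)=0$ for all $i,j,s$. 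Re-substituting this with only $k=l=s$ left free and invoking the antisymmetry $f(kl,ij)=-f(ij,kl)$ then forces $f(ij,mn)=0$ for all indices, which is exactly the required traciality.

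The main obstacle is this algebraic manipulation with the Wick formula: the half-liberation relation only gives one linear constraint per triple of generators, and it must be cleverly specialized (diagonal indices, then symmetry) to propagate the vanishing to every $f(ij,mn)$. Once this step is in place, the theorem falls out as a clean combination of Corollary \ref{cor:GaussClassical}, Remark \ref{remclass}, and Corollary \ref{cor:Gausssubgroups}.
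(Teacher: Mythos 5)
Your proof is correct, and its overall architecture is the same as the paper's: show that every Gaussian generating functional on $O_N^*$ satisfies one of the equivalent conditions of Remark \ref{remclass}, deduce that it factors through the classical version $O_N$, and conclude via Corollary \ref{cor:GaussClassical} and Corollary \ref{cor:Gausssubgroups}. The difference is in how the key condition is extracted from the half-liberation relations. The paper verifies condition (ii) of Remark \ref{remclass}: writing $\eta(u_{jk})=v_{jk}$, it expands $\phi(abc)=\phi(cba)$ into an identity among the products $\overline{v_{\ell m}}v_{np}$, imports from \cite[Prop.\ 3.7]{das+al18} the anti-symmetry of the cocycle matrix $v$ (so that diagonal entries vanish), and runs a case analysis over which index pairs are diagonal to obtain $\langle\eta(u_{\ell m}),\eta(u_{np})\rangle\in\R$, which is then fed into Lemma \ref{lem:gaussianOn}. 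You verify the equivalent condition (i) directly on $\phi$: subtracting the two Wick expansions (Proposition \ref{prop:wick}) of $u_{ij}u_{kl}u_{mn}$ and $u_{mn}u_{kl}u_{ij}$ indeed yields $f(ij,kl)\delta_{mn}+f(ij,mn)\delta_{kl}+f(kl,mn)\delta_{ij}=0$ (the first-order terms cancel because $\varepsilon$ is multiplicative), $f$ is antisymmetric by its very definition, and your two-step specialization ($k=l=m=n=s$, then $k=l=s$) forces $f\equiv 0$. This is a modest but genuine gain in self-containedness: your route needs neither the anti-symmetry of $v$ from \cite{das+al18} nor any mention of the cocycle, whereas the paper's route isolates the reusable intermediate statement of Lemma \ref{lem:gaussianOn} in terms of $\eta$. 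Both arguments are complete and correct.
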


\begin{proof}
First note that for any Gaussian generating functional we have
\begin{align*}
\phi(abc) & = \varepsilon(ab)\phi(c) + \varepsilon(ac)\phi(b) + \varepsilon(bc) \phi(a)
\\
& + \langle(\eta(a^{*}), \eta(b)\rangle \varepsilon(c) + \langle\eta(a^{*}), \eta(c)\rangle \varepsilon(b) + \langle\eta(b^{*}), \eta(c)\rangle\varepsilon(a).
\end{align*}
Thus, if $j,k=1,\ldots, N$ and  we have $\eta(u_{jk}) = v_{jk}\in\mathbb{C}$, then the condition $\phi(abc) = \phi(cba)$ for $a, b, c\in\{u_{jk} \mid 1\leqslant j, k\leqslant N\}$ becomes
\begin{equation*}
\delta_{jk} \overline{v_{\ell m}} v_{np} + 
\delta_{\ell m} \overline{v_{jk}} v_{np} + 
\delta_{np} \overline{v_{jk}} v_{\ell m} 
=
\delta_{np} \overline{v_{\ell m}} v_{jk} + 
\delta_{\ell m} \overline{v_{np}} v_{jk} + 
\delta_{jk} \overline{v_{np}} v_{\ell m} 
\end{equation*}
for all $j, k, \ell, m, n, p\in\{1\, \ldots, N\}$.
Now, by \cite[Proposition 3.7]{das+al18}, $v = (v_{jk})_{j,k=1}^N$ has to be anti-symmetric, therefore $v_{jj} = 0$ for $j = 1, \ldots, N$.
We have several possibilities:
\begin{itemize}
\item If two or three of the index pairs contain twice the same index, e.g.\ $j=k$ and $\ell=m$, then the all terms vanish;
\item If none of the index pairs contains twice the same index, i.e.\ if $j\neq k$, $\ell\neq m$, and $n\neq p$, then we also find $0 = 0$, thanks to the Kronecker symbols;
\item The only non-trivial case occurs if only one index pair contains twice the same index. Without loss of generality we can assume that this is the first pair, i.e.\ $j=k$. In that case we find the condition
\begin{equation*}
\overline{v_{\ell m}} v_{np} = \overline{v_{np}} v_{\ell m}
\end{equation*}
for all $\ell, m, n, p\in\{1, \ldots, N\}$.
\end{itemize}
The last condition exactly means that $\langle \eta(u_{\ell m}), \eta(u_{np})\rangle \in \R$, so that we can conclude by Lemma \ref{lem:gaussianOn} that all Gaussian triples factor through $O_{N}$. As a consequence,
\begin{equation*}
\Gauss(O_{N}^{*}) = \Gauss(O_{N}) = SO_{N}.
\end{equation*}
\end{proof}

We can deal in a similar way with another variant of $O_{N}^{+}$ called the \emph{twisted orthogonal quantum group} $\overline{O}_{N}$ and introduced in \cite{bb07} as a type of $q$-deformation of orthogonal groups at $q = -1$.
More precisely, we let $\Pol(\overline{O}_{N})$ be the quotient of $\Pol(O_{N}^{+})$ by the relations :
\begin{equation*}
u_{ij}u_{k\ell} = \left\{\begin{array}{ccc}
-u_{k\ell}u_{ij} & \text{if} & i = k \:\&\: j\neq \ell \text{ or } i \neq k \:\&\: j = \ell \\
u_{k\ell}u_{ij} & \text{otherwise}
\end{array}\right.
\end{equation*}
In other words, the generators anti-commute if they are on the same row or column and commute otherwise.

\begin{proposition}
The Gaussian part of $\overline{O}_{N}$ is trivial for any $N \in \mathbb{N}$.
\end{proposition}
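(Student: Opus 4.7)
The plan is to show that on $\overline{O}_N$ every Gaussian generating functional is already a drift, and then to identify the classical version of $\overline{O}_N$ with a finite group, so that the drift part (and therefore the Gaussian part) collapses.

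First I would prove that for any Gaussian Sch\"urmann triple $(\rho, \eta, \phi)$ on $\Pol(\overline{O}_N)$, the cocycle $\eta$ vanishes. Indeed, $\eta$ is a derivation by Proposition \ref{prop-gauss}, and $\varepsilon(u_{ab}) = \delta_{ab}$. Applying $\eta$ to the orthogonality relation $\sum_k u_{ik}u_{jk} = \delta_{ij}\mathbf{1}$ gives $\eta(u_{ij}) + \eta(u_{ji}) = 0$, so in particular $\eta(u_{ii}) = 0$ for every $i$. Next, for $i\neq j$, the generators $u_{ii}$ and $u_{ij}$ sit in the same row and therefore satisfy $u_{ii}u_{ij} + u_{ij}u_{ii} = 0$. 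Applying $\eta$ to this identity and invoking $\eta(u_{ii}) = 0$ yields $2\eta(u_{ij}) = 0$. Hence $\eta$ vanishes on all generators, and so on all of $\Pol(\overline{O}_N)$.

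Once $\eta \equiv 0$ is established, the coboundary identity \eqref{eq-cobound} gives $\phi(a^{*}b) = 0$ for every $a, b \in K_1$, and since $K_1$ is self-adjoint this forces $\phi_{\mid K_2} = 0$. Thus $\phi$ is a drift, and by Proposition \ref{prop:drifts} it factors through the classical version of $\overline{O}_N$.

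It remains to identify that classical version. In the abelianization of $\Pol(\overline{O}_N)$, combining commutativity with the defining anti-commutation forces $u_{ij}u_{k\ell} = 0$ whenever the pairs $(i,j)$ and $(k,\ell)$ share exactly one index, i.e.\ whenever they are distinct entries of the same row or same column. Together with $\sum_k u_{ik}^{2} = \sum_k u_{ki}^{2} = 1$, this forces the matrix $(u_{ij})$, evaluated at any classical point, to have exactly one non-zero entry per row and per column, each such entry being $\pm 1$. Hence the classical version of $\overline{O}_N$ is the hyperoctahedral group $H_N$ of signed permutation matrices. Since $H_N$ is finite, Corollary \ref{cor:GaussClassical} gives $\Gauss(H_N) = \{e\}$, and Corollary \ref{cor:Gausssubgroups} applied to the inclusion $\Gauss(\overline{O}_N) \subset H_N$ yields $\Gauss(\overline{O}_N) = \{e\}$.

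The main point is the vanishing of $\eta$, which rests on using an anti-commutation relation within a single row together with the derivation property; everything else is a routine application of the results established earlier, and the identification of the classical version with $H_N$ is straightforward once the relations are written down.
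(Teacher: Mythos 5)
Your proof is correct, but it takes a genuinely different route from the paper's. The paper works with the functional $\phi$ itself: evaluating the Gaussian formula on the anti-commutators $u_{ij}u_{i\ell}+u_{i\ell}u_{ij}=0$ and using anti-symmetry of the cocycle matrix, it deduces $\phi(u_{i\ell})=0$ for all off-diagonal entries, so that every Gaussian functional factors through the \emph{diagonal} quotient, which is the dual of $\Z_2^{\times N}$ and hence totally strongly disconnected. You instead apply the \emph{cocycle} $\eta$ (an $\varepsilon$-$\varepsilon$-derivation by Proposition~\ref{prop-gauss}) to the same row relation $u_{ii}u_{ij}+u_{ij}u_{ii}=0$, obtaining $2\eta(u_{ij})=0$, and combine this with anti-symmetry to get $\eta\equiv 0$; note in passing that the step $2\eta(u_{ij})=0$ does not even require $\eta(u_{ii})=0$, since the terms involving $\eta(u_{ii})$ carry a factor $\varepsilon(u_{ij})=0$. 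Via \eqref{eq-cobound} this yields the strictly stronger intermediate statement that every Gaussian generating functional on $\overline{O}_N$ is a drift, and you then collapse through the \emph{classical} version (the hyperoctahedral group $H_N$, which is finite) using Proposition~\ref{prop:drifts}, Corollary~\ref{cor:GaussClassical} and Corollary~\ref{cor:Gausssubgroups}. So the two arguments exploit the same anti-commutation relations but project onto opposite quotients: the paper onto the cocommutative (group-dual) part, you onto the commutative part. Your version has the advantage of isolating the cleaner structural fact ``all Gaussian functionals on $\overline{O}_N$ are drifts,'' and it arguably makes the final factorisation step more transparent, since vanishing of $\eta$ immediately kills $K_2$ rather than requiring a check that $\phi$ vanishes on the full ideal generated by the off-diagonal coefficients.
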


\begin{proof}
The computations of Lemma \ref{lem:gaussianOn} show that the condition for the second case is equivalent to $\overline{v_{ij}}v_{k\ell}\in \R$ for any $i,j,k,l=1,\ldots, N$ as soon as $i\neq k$ and $j\neq \ell$. As for the first case, observe that
\begin{equation*}
\phi(u_{ij}u_{i\ell} + u_{i\ell}u_{ij}) = 2(\delta_{ij}\phi(u_{i\ell}) + \delta_{i\ell}\phi(u_{ij})) + 2\mathrm{Re}\left(\overline{v_{ij}}v_{i\ell}\right).
\end{equation*}
Recalling that for any Gaussian functional on $O_{N}^{+}$, $v$ is anti-symmetric (see \cite[Section 3]{das+al18}), we get for $\ell\neq j = i$
\begin{equation*}
\phi(u_{i\ell}) = \mathrm{Re}(\overline{v_{ii}}v_{i\ell}) = 0
\end{equation*}
so that $\phi$ factors through the quotient by the ideal generated by all the off-diagonal coefficients. The quotient is easily seen to be the dual of $\Z_{2}^{\times N}$, which has trivial Gaussian part because it is totally strongly disconnected, hence the result.
\end{proof}

\subsection{Free quantum groups}\label{subsec:free}

\begin{proposition}\label{prop:freeqg}
For $N\geqslant 4$, the Gaussian part of $O_{N}^{+}$ is neither classical nor dual to a discrete group. The same holds for $U_{N}^{+}$ with $N\geqslant 2$.
\end{proposition}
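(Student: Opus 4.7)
The plan is to prove the two assertions in each case, using the monotonicity of the Gaussian part with respect to inclusion of subgroups (Corollary \ref{cor:Gausssubgroups}) together with Corollary \ref{cor:GaussClassical}. The non-cocommutativity of both $\Pol(\Gauss(O_N^+))$ (for $N\geqslant 4$) and $\Pol(\Gauss(U_N^+))$ (for $N\geqslant 2$) is the easier half: since $SO_N$ (resp.\ $U(N)$) is a connected classical subgroup of $O_N^+$ (resp.\ $U_N^+$), it is Gaussian by Corollary \ref{cor:GaussClassical}, hence by Corollary \ref{cor:Gausssubgroups} it embeds into the Gaussian part. As $SO_N$ is non-abelian for $N\geqslant 3$ and $U(N)$ is non-abelian for $N\geqslant 2$, their function algebras are not cocommutative, and since Hopf $^*$-algebra quotients of cocommutative Hopf $^*$-algebras are cocommutative, neither Gaussian part can be cocommutative.

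For the non-classicality of $\Gauss(U_N^+)$ the same subgroup strategy works: the diagonal surjection $u_{ij}\mapsto \delta_{ij} g_i$ onto $\C[F_N]$ realises $\widehat{F_N}$ as a subgroup of $U_N^+$, and by Theorem \ref{dual:Gaussian} together with torsion-freeness of free nilpotent groups (Malcev), one has $\Gauss(\widehat{F_N})=\widehat{F_N/\gamma_{3}(F_N)}$. For $N\geqslant 2$ the quotient $F_N/\gamma_3(F_N)$ is non-abelian (it contains the discrete Heisenberg group), so its group algebra is non-commutative, forcing $\Pol(\Gauss(U_N^+))$ to be non-commutative via the analogous quotient argument.

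For the non-classicality of $\Gauss(O_N^+)$ the subgroup approach breaks down: the diagonal of $O_N^+$ is $\widehat{\Z_2^{\ast N}}$, which has torsion and hence trivial Gaussian part, so no analogous subgroup is visible. Instead, I will construct an explicit Gaussian generating functional that does not factor through the classical version $O_N$. Any Gaussian cocycle on $O_N^+$ is determined by an antisymmetric matrix $V=(v_{ij})$ with entries in a pre-Hilbert space $D$ (antisymmetry being exactly the relation imposed by orthogonality of $u$, the self-adjointness relations imposing nothing further); for $N\geqslant 4$ I take $D=\C$ with $v_{12}=1$, $v_{34}=i$ (and $v_{21}=-1$, $v_{43}=-i$ by antisymmetry, the remaining entries zero), and set $\phi(u_{ij})=0$ on generators. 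The Sch\"urmann relation \eqref{eq-cobound} together with $u_{ij}^*=u_{ij}$ and $\varepsilon(u_{ij})=\delta_{ij}$ then yields $\phi(u_{12}u_{34})=\langle v_{12},v_{34}\rangle=i$ and $\phi(u_{34}u_{12})=\langle v_{34},v_{12}\rangle=-i$, so $\phi([u_{12},u_{34}])=2i\neq 0$. Since the kernel of the quotient $\Pol(O_N^+)\to \Pol(O_N)$ is generated by commutators, $\phi$ does not factor through $O_N$, hence $\Gauss(O_N^+)$ cannot be contained in the classical version and in particular is not classical. The main point calling for care is checking that antisymmetry is the only algebraic constraint on $V$ coming from the defining relations of $\Pol(O_N^+)$, so that the prescribed data really does extend to a well-defined Gaussian generating functional; this is a direct verification using the explicit CQG-presentation of $O_N^+$.
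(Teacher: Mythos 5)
Your treatment of non-cocommutativity (via the connected non-abelian subgroups $SO_N$ and $U(N)$) and of non-classicality for $U_N^+$ (via $\widehat{\mathbb{F}_N}$ and Theorem \ref{dual:Gaussian}) coincides with the paper's argument. For the non-classicality of $\Gauss(O_N^+)$ you diverge, and here two remarks are in order. First, your claim that ``the subgroup approach breaks down'' is not correct: the paper's proof is precisely a subgroup argument. The diagonal subgroup $\widehat{\Z_2^{\ast N}}$ is indeed useless, but $O_2^+$ contains the circle $\T=\widehat{\Z}$, so $O_4^+\supset O_2^+\ast O_2^+\supset\widehat{\Z\ast\Z}=\widehat{\mathbb{F}_2}$, whose Gaussian part is the dual of the non-abelian discrete Heisenberg group by Theorem \ref{dual:Gaussian}; Corollary \ref{cor:Gausssubgroups} then concludes exactly as in your $U_N^+$ case, and the case $N>4$ follows from $O_4^+\subset O_N^+$.

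Second, your explicit construction is correct in spirit but flawed as stated: prescribing $\phi(u_{ij})=0$ on all generators is inconsistent with the defining relations. Applying \eqref{eq-cobound} to the relation $\sum_k u_{1k}u_{1k}=1$ forces $2\phi(u_{11})+\sum_k|v_{1k}|^2=0$, i.e.\ $\phi(u_{11})=-\tfrac12$ with your data (and similarly for $u_{22},u_{33},u_{44}$). This is harmless for your key computation $\phi(u_{12}u_{34})-\phi(u_{34}u_{12})=\langle v_{12},v_{34}\rangle-\langle v_{34},v_{12}\rangle=2i$, which only involves off-diagonal counit values, but it shows that the ``direct verification'' you defer is not the one you describe: antisymmetry of $V$ is the only constraint on the \emph{cocycle}, whereas the functional itself has forced (nonzero, negative) diagonal values, and one must still check --- or quote the classification of Gaussian generating functionals on $O_N^+$ from \cite[Section 3]{das+al18}, which the paper invokes elsewhere --- that the corrected data extends to a well-defined conditionally positive functional. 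With that repair your route is a valid alternative, producing an explicit Gaussian functional not factoring through the abelianization rather than a Gaussian subgroup, but the paper's argument is shorter and sidesteps the existence question entirely.
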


\begin{proof}
Let us first note the Gaussian part of the dual of the free group $\mathbb{F}_{2}$ on two generators is not classical. Indeed, it arises from the quotient of $\mathbb{F}_{2}$ obtained by the relations making the commutator of the two generators central, which is easily seen to be isomorphic to the discrete Heisenberg group. As a consequence, $O_{4}^{+}$ contains, through the quotient $\Pol(O_{4}^{+})\to \Pol(O_{2}^{+}\ast O_{2}^{+})$, the dual of the (non-abelian) Heisenberg group, which is Gaussian. It also contains the nonabelian group $SO_{4}$, which is also Gaussian. It then follows from Corollary \ref{cor:Gausssubgroups} that $\Gauss(O_{4}^{+})$ contains a classical non-abelian group and a dual of a non-commutative discrete group. For $N > 4$, simply observe that $O_{N}^{+}$ contains $O_{4}^{+}$, hence a similar containment holds for their Gaussian parts, again by Corollary \ref{cor:Gausssubgroups}.

As for $U_{N}^{+}$, it contains the dual of the free group $\mathbb{F}_{N}$, hence the free $2$-nilpotent torsion-free group, as well as the connected compact group $U_{N}$.
\end{proof}

We do not know whether $U_{N}^{+}$ is Gaussian or not. Let us notice however that the problem can be reduced to the case $N = 2$ thanks to the following observation.

\begin{lemma}
If $\QG$ is topologically generated by Gaussian quantum subgroups $(\QG_{i})_{i\in I}$, then it is Gaussian.
\end{lemma}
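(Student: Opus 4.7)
The plan is to unpack the definitions and reduce everything to the fact that each $\QG_i$ being Gaussian means that the only Hopf $^{*}$-ideal of $\Pol(\QG_i)$ contained in the intersection of the kernels of all Gaussian functionals on $\QG_i$ is $\{0\}$. Concretely, I want to show that the Hopf $^{*}$-ideal $\I_{\Gauss(\QG)}\subset \Pol(\QG)$ — which is by definition the largest Hopf $^{*}$-ideal contained in $\bigcap_{\phi \text{ Gaussian}} \ker(\phi)$ — equals $\{0\}$. So let $\mathcal{J}$ be any Hopf $^{*}$-ideal contained in that intersection and argue that $\mathcal{J}=\{0\}$.

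The key preliminary observation is that for the surjective Hopf $^{*}$-homomorphisms $\pi_i : \Pol(\QG)\to \Pol(\QG_i)$ one has $\pi_i(K_3(\Pol(\QG))) = K_3(\Pol(\QG_i))$, as noted in the discussion following the definition of quantum subgroups. Consequently, if $\psi : \Pol(\QG_i)\to \C$ is a Gaussian generating functional on $\QG_i$, then the pullback $\psi\circ\pi_i$ is a Gaussian generating functional on $\QG$: normalization, hermitianity and conditional positivity transfer under $\pi_i$, and $\psi\circ\pi_i$ vanishes on $K_3(\Pol(\QG))$ because $\psi$ vanishes on $K_3(\Pol(\QG_i))$.

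Given this, the proof proceeds in two steps. First, since every $\psi\circ\pi_i$ is a Gaussian functional on $\QG$, it vanishes on $\mathcal{J}$, which says precisely that $\pi_i(\mathcal{J})$ is killed by every Gaussian functional on $\QG_i$. Second, because $\pi_i$ is a surjective Hopf $^{*}$-morphism, $\pi_i(\mathcal{J})$ is itself a Hopf $^{*}$-ideal of $\Pol(\QG_i)$, contained in $\bigcap_{\psi \text{ Gaussian on } \QG_i}\ker(\psi)$. The Gaussianity hypothesis on $\QG_i$ means that the largest Hopf $^{*}$-ideal in this intersection is $\{0\}$, so $\pi_i(\mathcal{J})=\{0\}$, i.e.\ $\mathcal{J}\subset \ker(\pi_i)=\I_{\QG_i}$. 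This holds for all $i\in I$, so $\mathcal{J}\subset \bigcap_{i\in I}\I_{\QG_i}$.

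The conclusion is then immediate from the definition of topological generation: since $\QG$ is topologically generated by the $\QG_i$, the largest Hopf $^{*}$-ideal contained in $\bigcap_{i} \I_{\QG_i}$ is $\{0\}$, hence $\mathcal{J}=\{0\}$, and $\Gauss(\QG)=\QG$. The only non-routine checks are the two invoked general facts — that surjective Hopf $^{*}$-morphisms send Hopf $^{*}$-ideals to Hopf $^{*}$-ideals, and that pullback of a Gaussian functional along such a morphism remains Gaussian — both of which follow directly from the basic compatibilities already recorded in Section~\ref{sec:Gaussianpart}; there is no genuine obstacle, and the statement is essentially a reformulation of the last part of Corollary \ref{cor:Gausssubgroups} with the explicit ``topologically generated'' language of \cite{chirvasitu20}.
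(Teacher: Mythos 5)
Your argument is correct and is essentially identical to the paper's proof: both take a Hopf $^*$-ideal annihilated by all Gaussian functionals of $\QG$, push it forward along each $\pi_i$, use that pullbacks of Gaussian functionals are Gaussian to see the image is a Hopf $^*$-ideal killed by all Gaussian functionals on $\QG_i$ and hence trivial, and then conclude via topological generation. Your write-up just makes explicit a couple of routine verifications (that $\pi_i$ maps $K_3$ into $K_3$ and Hopf $^*$-ideals to Hopf $^*$-ideals) that the paper leaves implicit.
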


\begin{proof}
Let $I$ be a Hopf *-ideal on which all Gaussian functionals of $\QG$ vanish. Given a Gaussian functional $\phi$ in $\Pol(\QG_{i})$, $\phi\circ\pi_{i}$ is a Gaussian functional on $\Pol(\QG)$, hence it vanishes on $I$. In other words, $\pi_{i}(I)$ is a Hopf *-ideal annihilating all Gaussian functionals, hence $\pi_{i}(I) = 0$ by assumption. This is in turn equivalent to $I\subset \bigcap_{i\in I}\ker(\pi_{i})$, which by topological generation forces $I = 0$.
\end{proof}

Because $U_{N}^{+}$ is topologically generated by $U_{N-1}^{+}$ and $U_{N}$ for all $N\geqslant 3$ by \cite{chirvasitu20}, it follows by induction that if $U_{2}^{+}$ is Gaussian, then $U_{N}^{+}$ is Gaussian for all $N\geqslant 2$. Nevertheless, it seems difficult to understand all the Gaussian processes on $U_{2}^{+}$. One strategy to prove Gaussianity would be to find enough Gaussian quantum subgroups to topologically generate everything. For instance $U_{2}$ and the dual of the Heisenberg group $\QH_{3}$ are both Gaussian quantum subgroups of $U_{2}^{+}$, leading to the question: is $U_{2}^{+}$ topologically generated by $U_{2}$ and the dual of $\QH_{3}$? More generally one may ask: for which quotients $\Gamma$ of $\mathbb{F}_2$ is $U_{2}^{+}$ topologically generated by $U_{2}$ and the dual of $\Gamma$?

%\section*{Conflict of interest}

 %The authors declare that they have no conflict of interest.

\end{document}